\numberwithin{equation}{section}
\theoremstyle{plain}
\newtheorem{thm}{Theorem}[section]
\newtheorem{pro}[thm]{Proposition}
\newtheorem{lem}[thm]{Lemma}
\newtheorem{cor}[thm]{Corollary}
\theoremstyle{definition}
\newtheorem{dfn}[thm]{Definition}
\newtheorem{nt}[thm]{Notation}
\theoremstyle{remark}
\newtheorem{rem}[thm]{Remark}
\newtheorem{cons}[thm]{Construction}
\newtheorem*{exa*}{Examples}
\newcommand{\loc}{\mathrm{loc}}
\newcommand{\cN}{\mathcal{N}}
\newcommand{\kS}{\mathfrak{S}}
\newcommand{\vir}{\mathrm{vir}}
\newcommand{\cE}{\mathcal{E}}
\newcommand{\aA}{\mathbb{A}}
\newcommand{\PP}{\mathbb{P}}
\newcommand{\CC}{\mathbb{C}}
\newcommand{\ZZ}{\mathbb{Z}}
\newcommand{\NN}{\mathbb{N}}
\newcommand{\QQ}{\mathbb{Q}}
\newcommand{\cO}{\mathcal{O}}
\newcommand{\ev}{\mathrm{ev}}
\renewcommand{\(}{\left(}
\renewcommand{\)}{\right)}
\newcommand{\cC}{\mathcal{C}}
\newcommand{\cF}{\mathcal{F}}
\newcommand{\cM}{\mathcal{M}}
\newcommand{\cP}{\mathcal{P}}
\newcommand{\cX}{\mathcal{X}}
\newcommand{\cY}{\mathcal{Y}}
\newcommand{\Hom}{\operatorname{Hom}}
\def\<{\left\langle}
\def\>{\right\rangle}
\newcommand{\lra}{\longrightarrow}
\newcommand{\longhookrightarrow}{\lhook\joinrel\longrightarrow}
\newcommand{\supth}[1]{\ensuremath{#1^{\mathrm{th}}}}
\title{Hodge--Gromov--Witten theory}
\author{J\'er\'emy Gu\'er\'e}
\address{Institut Fourier, UMR 5582, Laboratoire de Math\'ematiques, Universit\'e Grenoble Alpes, CS 40700, 38058 Grenoble cedex 9, France} 
\email{jeremy.guere@univ-grenoble-alpes.fr}
\begin{document}

%%%%%%%%%%%%%%%%%%%%%%%%%%%%%%%
% Title page
%%%%%%%%%%%%%%%%%%%%%%%%%%%%%%%

%\removeabove{}
%\removebetween{}
%\removebelow{}

\maketitle

\begin{prelims}

\DisplayAbstractInEnglish

\bigskip

\DisplayKeyWords

\medskip

\DisplayMSCclass

\end{prelims}

%%%%%%%%%%%%%%%%%%%%%
% Table of Contents
%%%%%%%%%%%%%%%%%%%%%

\newpage

\setcounter{tocdepth}{1}

\tableofcontents

%%%%%%%%%%%%%%%%%%%%%
% Content begins here
%%%%%%%%%%%%%%%%%%%%%

\section{Introduction}
Gromov--Witten theory has seen tremendous development in the last thirty years.  Originating from theoretical physics, it is mathematically formulated as an intersection theory of complex curves traced on a complex smooth projective variety, and provides invariants that one thinks of as a virtual count of these curves.  The most famous example is a full computation of the genus zero invariants enumerating rational curves on the quintic threefold; see \cite{Candelas,Gi,LLY}.

Gromov--Witten theory is well understood in all genera for toric varieties, or even toric Deligne--Mumford (DM) stacks; see \cite{GraberPandha,Liu}. Precisely, the moduli space of stable maps inherits a torus action from the target space, and the computation essentially reduces to a calculation on the moduli space to the fixed locus. This is the content of the virtual localization formula, see \cite{GraberPandha}, which is an enhancement of the classical Atiyah--Bott localization formula; see \cite{Atiyah}. We also refer to \cite{Edidin2} for an algebraic proof.

Smooth hypersurfaces in toric DM stacks are the next class of spaces to consider, but little is known in this situation.  The difficulty comes from the non-invariance of the hypersurface by the torus action in general, so that there is no direct way to apply a localization formula to decrease the complexity of the problem.  Consider the famous example of the quintic hypersurface in $\PP^4$. As we mention above, the genus zero theory is fully determined; see \cite{Candelas,Gi,LLY}.  The genus one case was completely proven by Zinger \cite{Zi} after a great deal of hard work, and nowadays several approaches are solving it up to genus three; see \cite{Honglu5,Ruan5,Li5}.  It is worth noticing that physicists have predictions up to genus $52$, see \cite{Klemm}, and that Maulik--Pandharipande \cite{MaulikPandha} described a proposal working in any genus, although it is too hard to implement for practical use.  We also mention a recent breakthrough proving the holomorphic anomaly equation conjectured by \cite{BCOV}; see \cite{Ruan5,Li5}.

Even in genus zero, the problem of computing Gromov--Witten invariants of smooth hypersurfaces in toric DM stacks is far from being completely solved.  Consider the special case of hypersurfaces in weighted projective spaces. The genus zero theory is only known under a restrictive condition: the degree of the hypersurface is a multiple of every weight. One refers to it as the Gorenstein condition, as it is the condition for the coarse space of the hypersurface to have Gorenstein singularities.  We recall that Gromov--Witten theory is invariant under smooth deformations; hence we can choose any defining polynomial of degree~$d$ as long as the associated hypersurface is a smooth DM stack.  As a consequence, one can also rephrase the Gorenstein condition as the existence of a Fermat hypersurface of degree $d$, which is defined by a Fermat polynomial of the form $x_1^{a_1}+\dotsb+x_N^{a_N}$.

There is a substantial simplification for the genus zero theory of hypersurfaces in weighted projective spaces under the Gorenstein condition. It is called the convexity property; see \cite[Introduction]{Guere1}.  It implies that the virtual cycle of the theory, which is the crucial object to handle, equals the top Chern class of a vector bundle over the moduli space of stable maps to the weighted projective space. It is then calculated by a Grothendieck--Riemann--Roch formula, see \cite{Kim,Lee,Coates,Tseng}, and the genus zero Gromov--Witten theory of the hypersurface is deduced from the genus zero Gromov--Witten theory of the weighted projective space; one calls it the quantum Lefschetz principle; see \cite{Kim,Lee,Coates,Tseng}.  Without the Gorenstein condition, the convexity property may fail and the virtual cycle is not computable.  Although Fan and Lee \cite{Honglu5} obtain a version of the quantum Lefschetz principle in higher genus for projective hypersurfaces, a general statement is false; see \cite{QLPmayfail}.

In this paper, we work on smooth (as DM stacks) hypersurfaces in weighted projective spaces, under a mild condition. Precisely, we relax the existence of a Fermat hypersurface to the existence of a chain hypersurface, which is defined by a chain polynomial of the form $x_1^{a_1}x_2+\dotsb+x_{N-1}^{a_{N-1}}x_N+x_N^{a_N}$, or to the existence of a loop hypersurface, which is defined by a loop polynomial of the form $x_1^{a_1}x_2+\dotsb+x_{N-1}^{a_{N-1}}x_N+x_N^{a_N}x_1$. This is more general than the Gorenstein condition, and non-convex cases appear.  In genus zero, the condition is even more general as we can relax the condition on weights and degree to the existence of a hypersurface defined by an invertible polynomial; see the beginning of Section~\ref{secinv}.  We then prove two results for these hypersurfaces:
\begin{itemize}
\item a genus zero quantum Lefchetz principle; see Corollaries~\ref{0HGWchain} and~\ref{0HGWloop} and Theorem~\ref{HGWinv}; 
\item a Hodge quantum Lefchetz principle in arbitrary genus; see Theorems~\ref{HGWchain} and~\ref{HGWloop}.
\end{itemize}
In the first, we express the genus zero Gromov--Witten theory of the hypersurface in terms of the genus zero Gromov--Witten theory of the weighted projective space.  In the second, we do the same in arbitrary genus, once we cap virtual cycles with the Hodge class, that is, the top Chern class of the Hodge bundle; see Definition~\ref{Hodge vir}.  As a consequence, this paper gives the first computation of the genus zero Gromov--Witten theory of hypersurfaces in a range of cases where the convexity property fails; see for instance Remark~\ref{chi6} for a discussion on Calabi--Yau $3$-folds with Euler characteristic equal to $\pm 6$.  It also gives the first comprehensive computation of Hodge integrals, which are Gromov--Witten invariants involving the Hodge class, in arbitrary genus for chain or loop hypersurfaces.

In order to tackle non-convexity issues, we develop in this paper a method that we phrase in a general framework, opening the way to further new results in Gromov--Witten theory.  We call it \textit{regular specialization} (see Theorem~\ref{skid}) as it consists in deforming a given smooth DM stack into a singular one in a regular way.  It can be understood as an enhancement of the invariance of Gromov--Witten theory under smooth deformations.  Precisely, given a regular family $\cX$ of DM stacks over $\aA^1$, that is, a flat morphism $\cX \to \aA^1$ with $\cX$ smooth, the perfect obstruction theory on the moduli space of stable maps to the total space $\cX$ pulls back to a perfect obstruction theory on every fiber, and the associated virtual cycle is independent of the fiber; we call it a \textit{regularized} virtual cycle. Furthermore, on smooth fibers, it equals the cap product of the Gromov--Witten cycle with the Hodge class.  Provided we have a global torus action on the family $\cX$ over $\aA^1$, the regularized virtual cycle localizes to the fixed locus in the central fiber; see Theorem~\ref{skid equiv}.

Under special assumptions listed at the beginning of Section~\ref{QLsection}, we prove a version of an equivariant quantum Lefschetz theorem, see Theorem~\ref{quantum Lefschetz}, relating for a $\CC^*$-equivariant embedding $\cX \hookrightarrow \cP$ of DM stacks the equivariant virtual cycles associated to $\cX$ and to $\cP$.  Moreover, if a torus $T=\(\CC^*\)^r$ acts on the ambient space $\cP$, \textit{e.g.}~if it is a toric DM stack, then we can relate the $\CC^*$-virtual cycle associated to $\cX$ to the $T$-virtual cycle associated to $\cP$.  Together with the regular specialization theorem, it yields Theorem~\ref{big theo}.

Genus zero is a special interesting case, as the Hodge class equals the fundamental class and the regularized virtual cycle equals the Gromov--Witten virtual cycle.  Let us call a DM stack regularizable, see Definition~\ref{regularisable}, if we can embed it as a fiber of a family of DM stacks over the affine space, whose total space is regular.  Although a regularizable DM stack may have bad singularities, we provide it with a genus zero Gromov--Witten theory via the regularized virtual cycle, and we prove the invariance of the genus zero theory under regular deformations; see Proposition~\ref{invariance}.  As a consequence, we can apply the localization formula whenever we have a torus action on the fiber, not necessarily on the total family.  One strategy to compute the genus zero Gromov--Witten theory of a DM stack is thus to take a regular specialization to another DM stack admitting a torus action with sufficiently nice fixed locus; see below for more details.

Finally, we highlight that this paper provides answers in Gromov--Witten theory to similar questions in FJRW theory (see \cite{FJRW1, FJRW2}) answered in \cite{Guere1,Guere2,Guere9}.  More precisely, it enters the big picture of the Landau--Ginzburg/Calabi--Yau (LG/CY) correspondence; see \cite{LGCY}.  In particular, Theorem~\ref{0HGWchain} should lead to a computation of the I-function using Givental's formalism, see \cite{Givental2},  and eventually to a genus zero mirror symmetry theorem without convexity.  Comparing with results in \cite{Guere1}, we should then obtain the LG/CY correspondence, extending the work of Chiodo--Iritani--Ruan \cite{LGCY}. We will discuss it in another paper.  We also observe that the knowledge of Hodge integrals is crucial for a computation of the Hamiltonians of the double ramification (DR) hierarchy introduced by Buryak \cite{Buryak} and may lead to new insights on the structure of Gromov--Witten invariants.

\subsubsection*{A note on tautological classes}  A fundamental question about virtual cycles is whether their push-forward to the moduli space of stable curves lies in the tautological Chow ring.  This question is largely open; \textit{e.g.}~it is unknown in the case of the quintic threefold.  A straightforward and yet noticeable consequence of our results is that the product of the Hodge class $\lambda_g$ with the virtual cycle is tautological in the Chow ring of $\overline{\cM}_{g,n}$ in all the cases we study, \textit{e.g.}~for smooth hypersurfaces defined by chain or loop polynomials.  This follows from the virtual localization formula and from the fact that fixed loci in the target space are isolated points.

\subsubsection*{Future works}  Based on this paper, we have thought of a new strategy aiming at computing all-genus Gromov--Witten invariants of projective hypersurfaces, and possibly other projective varieties.  The idea is the following: by Costello's theorem, see \cite{Costello}, genus $g$ Gromov--Witten invariants of a smooth projective variety $X$ are explicitly expressed in terms of genus zero Gromov--Witten invariants of the symmetric product $S^{g+1}X$.

Let $X$ be a smooth complex projective variety, and assume we have a family $\cX$ of DM stacks over $\aA^1$ admitting a torus action and whose fiber at $1 \in \aA^1$ is $X$.  Taking the symmetric fibered product over $\aA^1$, we obtain a family $\cX_g$ of DM stacks over $\aA^1$ admitting a torus action and whose fiber at $1 \in \aA^1$ is the smooth DM stack $S^{g+1}X$.  Precisely, we have
$$
\cX_g = \left[\cX \times_{\aA^1} \dotsm \times_{\aA^1} \cX / \kS_{g+1}\right].
$$
By Hironaka's theorem, see \cite{Hironaka} (note that we work over $\CC$), and its equivariant version (see \textit{e.g.}~\cite{Kollar}), there exists a resolution of singularities $\widetilde{\cX_g}$ of the DM stack $\cX_g$, which is an isomorphism outside the singular locus of the DM stack $\cX_g$ and which preserves the torus action. In particular, we get a morphism $\widetilde{\cX_g} \to \aA^1$, and the fiber at $1 \in \aA^1$ is still $S^{g+1}X$, because it is in the smooth locus of $\cX_g$.  Moreover, the birational map $\widetilde{\cX_g} \to \cX_g$ is obtained by a sequence of blow-ups, and the morphism $\cX_g \to \aA^1$ is flat; hence the morphism $\widetilde{\cX_g} \to \aA^1$ is flat as well; see for instance \cite[Appendix B.6.7]{Fulton}.  As a consequence, the DM stack $\widetilde{\cX_g}$ is a regular family over $\aA^1$ admitting a torus action and whose fiber at $1 \in \aA^1$ is the symmetric product $S^{g+1}X$.  According to our genus zero regular specialization theorem, genus zero Gromov--Witten invariants of $S^{g+1}X$, and thus genus $g$ Gromov--Witten invariants of $X$, are expressed by the localization formula in terms of genus zero Gromov--Witten invariants of the torus-fixed loci in (the fiber at $0 \in \aA^1$ of) $\widetilde{\cX_g}$.

\subsubsection*{Related works}  Since the first version of this paper, several other results have appeared regarding non-convex Gromov--Witten theory computations. We are aware of the following works: \cite{JunWang}, \cite{Shoemaker}, and \cite{Janda}.  All these works have different approaches from the one presented in this paper, and it would be interesting to compare them and show that they indeed compute the same numbers. Better, it would be great if we can combine these various methods to go even further in understanding non-convexity.

\subsection*{Acknowledgments}
The author is grateful to Alessandro Chiodo, Rahul Pandharipande, and Honglu Fan for many interesting discussions on this topic.  He would also like to thank his wife and daughters for their help and understanding during the paper's finalization.

\section{Hodge--Gromov--Witten theory}\label{sec1}
In this section, we prove a general theorem on Hodge--Gromov--Witten theory, which we call the `regular specialization theorem'. The context is the following.

\begin{dfn}\label{Hodge vir}
Given a smooth DM stack $\cY$, Gromov--Witten theory provides a virtual fundamental cycle for the moduli space $\cM_{\cY}$ of stable maps to $\cY$.  We will call the cup product of the virtual fundamental cycle with the top Chern class of the Hodge bundle\footnote{For a family $\pi \colon \cC \to S$ of genus $g$ curves, the Hodge bundle is a rank $g$ vector bundle on $S$ defined by the push-forward $\pi_* \omega_{\cC/S}$ of the relative canonical sheaf.} the \textit{Hodge virtual cycle}.  Hodge--Gromov--Witten theory is then intersection theory on $\cM_{\cY}$ against this cycle.
\end{dfn}

\begin{dfn}
A morphism $f \colon \cX \to \cY$ between two DM stacks is called a family when it is flat.  We also say that $\cX$ is a $\cY$-family.  Inverse images of geometric points $y \in \cY$ are called fibers.  A regular family is a family for which the DM stack $\cX$ is smooth.
\end{dfn}

Let $p \colon \cX \to \aA^1$ be a regular family of DM stacks over $\aA^1$, and denote by $X_0$ and $X_1$ its fibers at $0 \in \aA^1$ and at $1 \in \aA^1$.  We assume $X_0$ and $X_1$ to be proper, and $X_1$ to be smooth, but we do not impose any restriction on singularities of $X_0$.  Depending on the purpose, we may also assume the family $\cX$ is equipped with a torus action leaving $X_0$ invariant.

Let $\cM_{X_0}$ and $\cM_{X_1}$ be the moduli spaces of stable maps to $X_0$ and to $X_1$, with arbitrary genus, degree, number of markings, and isotropy type at markings.  Gromov--Witten theory for smooth DM stacks provides a perfect obstruction theory and a virtual fundamental cycle for the moduli space $\cM_{X_1}$, but not for $\cM_{X_0}$.

In the first subsection, we define the `regularized virtual cycles' for the moduli space $\cM_{X_1}$.  Moreover, we show that it equals the Hodge--Gromov--Witten virtual cycle, up to a sign.

Then, we define the regularized virtual cycle for the moduli space $\cM_{X_0}$.  The regular specialization theorem can be phrased as an equality between regularized virtual cycles of $\cM_{X_1}$ and of $\cM_{X_0}$.

Graber--Pandharipande's virtual localization formula, see \cite{GraberPandha}, applies to regularized virtual cycles.  Therefore, provided we have a torus action on the family preserving the central fiber $X_0$ and since the fixed locus in $X_0$ is smooth, we can decompose the Hodge--Gromov--Witten cycle of $\cM_{X_1}$ into Hodge--Gromov--Witten cycles of the fixed loci in $\cM_{X_0}$.

\subsubsection*{Conventions}
In this whole section, we fix the topological data of stable maps, such as the genus $g$, the number of markings $n$, the curve class $\beta$, and the isotropies, even though these topological data may be absent from the notation.  Furthermore, all perfect obstruction theories will be relative to the moduli stack of prestable curves $\mathfrak{M}_{g,n}$. We abusively omit it in the notation to make it more readable.

\subsection{(Relative) perfect obstruction theory for $\boldsymbol{X_1}$}\label{secpot}

\begin{nt}
For a DM stack $\cY$, we denote by $\cM_{\cY}$ the moduli space of stable maps to $\cY$, by $\pi_\cY \colon \cC_\cY \to \cM_\cY$ the universal curve, by $f_\cY \colon \cC_\cY \to \cY$ the universal map, and by $\omega_{\pi_{\cY}}$ the relative dualizing sheaf.  In the special cases of $\cX$, $X_0$, and $X_1$, we simplify the notation of the maps as
$$\pi=\pi_{\cX},\quad \pi_0=\pi_{X_0},\quad \pi_1=\pi_{X_1},\quad f=f_{\cX},\quad f_0=f_{X_0},\quad f_1=f_{X_1}.$$
\end{nt}

The morphism $p \colon \cX \to \aA^1$ induces a morphism
$$q \colon \cM_{\cX} \lra \cM_{\aA^1} \simeq \aA^1 \times \overline{\cM}_{g,n}.$$
Furthermore, we have fiber diagrams
\begin{equation}\label{diagrams}
\begin{tikzpicture}[scale=0.75]	
\node (A) at (0,1.5) {$\cM_{X_0}$};
\node (B) at (2.5,1.5) {$\cM_{\cX}$};
\node (C) at (2.5,0) {$\cM_{\aA^1}$,};
\node (D) at (0,0) {$\overline{\cM}_{g,n}$};
\draw[->] (D) -- (C);
\draw[->] (A) -- (B);
\draw[->] (A) -- (D);
\draw[->] (B) -- (C);
\draw[-] (1.25-0.1,0.75-0.1) -- (1.25-0.1,0.75+0.1) -- (1.25+0.1,0.75+0.1) -- (1.25+0.1,0.75-0.1) -- (1.25-0.1,0.75-0.1);
\node[above] at (1.25,1.5){$j_0$};
\node[right] at (2.5,0.75){$q$};
\node[left] at (0,0.75){$q_0$};
\node (A') at (6+0,1.5) {$\cM_{X_1}$};
\node (B') at (6+2.5,1.5) {$\cM_{\cX}$};
\node (C') at (6+2.5,0) {$\cM_{\aA^1}$,};
\node (D') at (6+0,0) {$\overline{\cM}_{g,n}$};
\draw[->] (D') -- (C');
\draw[->] (A') -- (B');
\draw[->] (A') -- (D');
\draw[->] (B') -- (C');
\draw[-] (6+1.25-0.1,0.75-0.1) -- (6+1.25-0.1,0.75+0.1) -- (6+1.25+0.1,0.75+0.1) -- (6+1.25+0.1,0.75-0.1) -- (6+1.25-0.1,0.75-0.1);
\node[above] at (6+1.25,1.5){$j_1$};
\node[right] at (6+2.5,0.75){$q$};
\node[left] at (6+0,0.75){$q_1$};
\end{tikzpicture}
\end{equation}
where the  bottom arrows are inclusions of $0 \in \aA^1$ and $1 \in \aA^1$.
In particular, the maps $j_0$ and $j_1$ are closed immersions, hence proper.
We also introduce notation for maps in the following fiber diagrams: 
\begin{center}
\begin{tikzpicture}[scale=0.75]	
\node (A) at (0,1.5) {$X_0$};
\node (B) at (2.5,1.5) {$\cX$};
\node (C) at (2.5,0) {$\aA^1$,};
\node (D) at (0,0) {$0$};
\draw[->] (D) -- (C);
\draw[->] (A) -- (B);
\draw[->] (A) -- (D);
\draw[->] (B) -- (C);
\draw[-] (1.25-0.1,0.75-0.1) -- (1.25-0.1,0.75+0.1) -- (1.25+0.1,0.75+0.1) -- (1.25+0.1,0.75-0.1) -- (1.25-0.1,0.75-0.1);
\node[above] at (1.25,1.5){$i_0$};
\node[right] at (2.5,0.75){$p$};
\node[left] at (0,0.75){$p_0$};
\node (A') at (6+0,1.5) {$X_1$};
\node (B') at (6+2.5,1.5) {$\cX$};
\node (C') at (6+2.5,0) {$\aA^1$.};
\node (D') at (6+0,0) {$1$};
\draw[->] (D') -- (C');
\draw[->] (A') -- (B');
\draw[->] (A') -- (D');
\draw[->] (B') -- (C');
\draw[-] (6+1.25-0.1,0.75-0.1) -- (6+1.25-0.1,0.75+0.1) -- (6+1.25+0.1,0.75+0.1) -- (6+1.25+0.1,0.75-0.1) -- (6+1.25-0.1,0.75-0.1);
\node[above] at (6+1.25,1.5){$i_1$};
\node[right] at (6+2.5,0.75){$p$};
\node[left] at (6+0,0.75){$p_1$};
\end{tikzpicture}
\end{center}
The map $i_1$ yields an exact triangle of cotangent complexes
$$i_1^*L_{\cX} \lra L_{X_1} \lra L_{X_1/\cX} \lra i_1^*L_{\cX}[1].$$
From the construction of obstruction theories on moduli spaces of maps, we obtain a commutative diagram
\begin{equation}
\begin{tikzpicture}[scale=1]	
\node (A) at (0,1) {$j_1^* E_{\cX}$};
\node (B) at (2.5,1) {$E_{X_1}$};
\node (C) at (5,1) {$E_{X_1/\cX}$};
\node (D) at (7.5,1) {$j_1^*E_{\cX}[1]$};
\draw[->] (A) -- (B);
\draw[->] (B) -- (C);
\draw[->] (C) -- (D);
\node (A') at (0,0) {$j_1^*L_{\cM_{\cX}}$};
\node (B') at (2.5,0) {$L_{\cM_{X_1}}$};
\node (C') at (5,0) {$L_{\cM_{X_1}/\cM_{\cX}}$};
\node (D') at (7.5,0) {$j_1^*L_{\cM_{\cX}}[1]$,};
\draw[->] (A') -- (B');
\draw[->] (B') -- (C');
\draw[->] (C') -- (D');
\draw[->] (A) -- (A');
\draw[->] (B) -- (B');
\draw[->] (C) -- (C');
\draw[->] (D) -- (D');
\end{tikzpicture}
\end{equation}
where each row is an exact triangle and where the obstruction theories are defined as
\begin{align*}
	E_{\cX} & :=  R\pi_*\(f^*L_{\cX} \otimes \omega_{\pi_{\cX}}\) \simeq \(R\pi_*f^*T_{\cX}\)^\vee, \\
	E_{X_1} & :=  R{\pi_1}_*\(f_1^*L_{X_1} \otimes \omega_{\pi_{X_1}}\) \simeq \(R{\pi_1}_*f_1^*T_{X_1}\)^\vee, \\
	E_{X_1/\cX} & :=  R{\pi_1}_*\(f_1^*L_{X_1/\cX} \otimes \omega_{\pi_{X_1}}\) \simeq R{\pi_1}_*\(\omega_{\pi_{X_1}}\)[1],
\end{align*}
where the smoothness of $\cX$ and of $X_1$ is used in the second equality of the first two lines.  For the second equality of the third line, we use that $p$ is flat to compute
$$L_{X_0/\cX} \simeq p_0^*L_{0/\aA^1} = \cO[1].$$

\begin{rem}\label{perfect}
  Since the stacks $\cX$ and $X_1$ are smooth, the obstruction theories $E_{\cX}$ and $E_{X_1}$ are perfect, \textit{i.e.}~of
  amplitude in $[-1,0]$.  On the other hand, the obstruction theory $E_{X_1/\cX}$ is of amplitude in $[-2,-1]$; hence it is not perfect.  However, one can represent it by a two-term complex of vector bundles $[A \to B]$ whose kernel and cokernel are given by
  $$0 \lra \mathbb{E} \lra A \lra B \lra \cO \lra 0.$$
Here, we recall that $\mathbb{E}:={\pi_1}_*(\omega_{\pi_{X_1}})$ is the Hodge bundle and is a pull-back from the moduli space of stable curves $\overline{\cM}_g$.  In particular, we have a map of complexes
$$\mathbb{E}[2] \lra E_{X_1/\cX}$$
that we use in the following definition.
\end{rem}

\begin{dfn}\label{skid potX_1}
The (relative) regularized obstruction theory for $\cM_{X_1}$ is defined as follows.  We first take
$$F_{X_1} := E_{X_1} \oplus \mathbb{E}[1],$$
and then use the map $E_{X_1} \to L_{\cM_{X_1}}$ and the composed morphism
$$\mathbb{E} \lra E_{X_1/\cX}^{-2} \lra \(j_1^*E_{\cX}\)^{-1} \lra \(j_1^*L_{\cM_{\cX}}\)^{-1} \lra L^{-1}_{\cM_{X_1}}$$
to get $F_{X_1} \to L_{\cM_{X_1}}.$ Clearly, it is a perfect obstruction theory on $\cM_{X_1}$.
\end{dfn}

\begin{dfn}\label{skid virtual cycle}
We call the virtual fundamental cycle $[\cM_{X_1},F_{X_1}] \in A_*(\cM_{X_1})$ obtained by Behrend--Fantechi \cite{BF} from the relative perfect obstruction theory $F_{X_1}$ (and the perfect obstruction theory from prestable curves) the \textit{regularized virtual cycle} of $\cM_{X_1}$.

We also call the virtual fundamental cycle $[\cM_{X_1},E_{X_1}] \in A_*(\cM_{X_1})$ obtained by Behrend--Fantechi from the perfect obstruction theory $E_{X_1}$ (and the perfect obstruction theory from prestable curves) the \textit{Gromov--Witten virtual cycle} of $\cM_{X_1}$.
\end{dfn}

\begin{lem}\label{Hodge}
The regularized virtual cycle equals the Hodge--Gromov--Witten virtual cycle up to a sign.  Precisely, we have the relation
$$
\left[\cM_{X_1},F_{X_1}\right] = (-1)^g \lambda_g \cdot \left[\cM_{X_1},E_{X_1}\right] \in A_*\(\cM_{X_1}\),
$$
where $\lambda_g := c_\mathrm{top}(\mathbb{E})$ is the top Chern class of the Hodge bundle.
\end{lem}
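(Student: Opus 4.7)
The key structural input from Definition~\ref{skid potX_1} is the splitting $F_{X_1} = E_{X_1} \oplus \mathbb{E}[1]$ in the derived category, with $\mathbb{E}[1]$ contributing only to the obstruction piece. My plan is to combine this with the standard principle that adding a shifted vector bundle $V[1]$ to a perfect obstruction theory cap-multiplies the virtual cycle by $e(V^\vee)$, then specialize to $V = \mathbb{E}$.

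First, I would dualize the splitting to get $F_{X_1}^\vee = E_{X_1}^\vee \oplus \mathbb{E}^\vee[-1]$, so that the associated bundle stack $\mathfrak{F} := h^1/h^0(F_{X_1}^\vee)$ decomposes as $\mathfrak{F} = \mathfrak{E} \oplus \mathbb{E}^\vee$, where $\mathfrak{E} := h^1/h^0(E_{X_1}^\vee)$ and the second summand is the honest vector bundle $\mathbb{E}^\vee$ over $\cM_{X_1}$ (since $\mathbb{E}^\vee[-1]$ contributes to $h^1$ but not to $h^0$). The associated cone maps fit into a product diagram inherited from the direct sum.

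Next, I would show that the intrinsic normal cone $\mathfrak{C}_{\cM_{X_1}}$ embeds into the sub-stack $\mathfrak{E} \times \{0\} \subset \mathfrak{F}$. The intrinsic normal cone depends only on $\cM_{X_1}$, not on the choice of obstruction theory, and the Gromov--Witten POT $E_{X_1}$ already provides the closed embedding $\mathfrak{C}_{\cM_{X_1}} \hookrightarrow \mathfrak{E}$; what has to be checked is that, with the POT map of Definition~\ref{skid potX_1}, the composition with $\mathfrak{E} \hookrightarrow \mathfrak{F}$ agrees with the embedding given by $F_{X_1}$. If the map $\mathbb{E}[1] \to L_{\cM_{X_1}}$ from Definition~\ref{skid potX_1} were zero, this would be tautological; in general one deforms that map by a scalar $t \in \aA^1$, producing a one-parameter family of POTs interpolating between the given map and the zero map, and invokes invariance of the virtual cycle under deformation of the POT map.

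Once the factorization through $\mathfrak{E} \times \{0\}$ is in place, the excess intersection formula applied to the trivial factor $\mathbb{E}^\vee$ of $\mathfrak{F}$ yields
\[
[\cM_{X_1},F_{X_1}] \;=\; 0_{\mathfrak{F}}^![\mathfrak{C}_{\cM_{X_1}}] \;=\; e(\mathbb{E}^\vee)\cap 0_{\mathfrak{E}}^![\mathfrak{C}_{\cM_{X_1}}] \;=\; e(\mathbb{E}^\vee) \cap [\cM_{X_1},E_{X_1}].
\]
Since $\mathbb{E}$ has rank $g$, the relation $e(\mathbb{E}^\vee) = (-1)^g c_g(\mathbb{E}) = (-1)^g \lambda_g$ completes the proof. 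The main technical obstacle is the deformation-invariance step used to reduce to the zero map $\mathbb{E}[1]\to L_{\cM_{X_1}}$; this can equivalently be handled through Manolache's formalism comparing POTs linked by an exact triangle whose cone is a shifted vector bundle.
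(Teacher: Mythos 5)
Your proof follows essentially the same route as the paper's: both exploit the splitting $F_{X_1} = E_{X_1} \oplus \mathbb{E}[1]$ to factor the bundle stack as $h^1/h^0(E_{X_1}^\vee) \times \mathrm{Spec}(\mathrm{Sym}\,\mathbb{E})$, peel off the extra factor by the self-intersection formula, and conclude with $e(\mathbb{E}^\vee) = (-1)^g\lambda_g$. The only difference is that you explicitly justify why the intrinsic normal cone may be taken to lie over the zero section of the $\mathbb{E}^\vee$-factor (by deforming the map $\mathbb{E}[1]\to L_{\cM_{X_1}}$ to zero, or via Manolache-type functoriality); the paper's proof leaves this point implicit and simply writes $0^!_{h^1/h^0(F_{X_1}^\vee)} = 0^!_{\mathrm{Spec}(\mathrm{Sym}\mathbb{E})}\circ 0^!_{h^1/h^0(E_{X_1}^\vee)}$.
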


\begin{proof}
The virtual fundamental class $[\cM_{X_1},E_{X_1}]$ is the intersection of the intrinsic normal cone $\mathfrak{C}_{\cM_{X_1}}$ of $\cM_{X_1}$ with the zero section of $h^1/h^0(E_{X_1}^\vee)$, and similarly for $[\cM_{X_1},F_{X_1}]$.  Since $F_{X_1} := E_{X_1} \oplus \mathbb{E}[1]$, we get
$$h^1/h^0\(F_{X_1}^\vee\) \simeq h^1/h^0\(E_{X_1}^\vee\) \times \mathrm{Spec}(\mathrm{Sym}\mathbb{E}).$$
Therefore, we have
\begin{align*}\pushQED{\qed}
\left[\cM_{X_1},F_{X_1}\right] &\enspace = \enspace 0^!_{h^1/h^0(F_{X_1}^\vee)}\left[\mathfrak{C}_{\cM_{X_1}}\right] \\
&\enspace = \enspace 0^!_{\mathrm{Spec}(\mathrm{Sym}\mathbb{E})}0^!_{h^1/h^0(E_{X_1}^\vee)}\left[\mathfrak{C}_{\cM_{X_1}}\right] \\
&\enspace = \enspace 0^!_{\mathrm{Spec}(\mathrm{Sym}\mathbb{E})} \left[\cM_{X_1},E_{X_1}\right] \\
&\enspace = \enspace c_\mathrm{top}\(\mathbb{E}^\vee\) \cap\left[\cM_{X_1},E_{X_1}\right].
\hfill\qedhere \popQED
\end{align*}
\renewcommand{\qed}{}    
\end{proof}

\begin{rem}\label{bad news}
  The reader should be careful that the obstruction theory $E_{X_1/\cX}$ is not equal in the derived category to its cohomology $\mathbb{E}[2] \oplus \cO[1]$, as we could have first believed.  However, we have the equality once we pass to K-theory, and thus to Chow rings/cohomology.  The same happens for $E_{X_0/\cX}$.  As a consequence, we are unable to define a perfect obstruction theory for $X_0$ (at the derived level).  However, a regularized virtual cycle (or even a regularized K-theoretic structure sheaf) does exist for $X_0$. We define it in the next subsection.
\end{rem}

\subsection{Regular specialization theorem}
First, we define the regularized virtual cycle of $\cM_{X_0}$, and we compare it to that of $\cM_{X_1}$. Then, we discuss the corresponding invariants.

\subsubsection{Pull-backs from the regular family}

\begin{dfn}\label{0skid virtual cycle}
We define the \textit{regularized virtual cycle} of $\cM_{X_0}$ as the pull-back of the virtual cycle from the regular total family $\cX$, \textit{i.e.}
$$\left[\cM_{X_0},F_{X_0}\right] := j_0^! \left[\cM_{\cX},E_{\cX}\right] \in A_*\(\cM_{X_0}\).$$
Note that the left-hand side is just notation. In particular, it is \textit{a priori} not induced by a perfect obstruction theory $F_{X_0}$ on $\cM_{X_0}$; see Remark~\ref{bad news}.
\end{dfn}

The following proposition shows that it is indeed the right thing to do.

\begin{pro}\label{equality of skid}
The regularized virtual cycle associated to a fiber of a regular family over $\aA^1$ does not depend on the fiber.  Precisely, we have equalities
\begin{align*}
j_0^! \left[\cM_{\cX},E_{\cX}\right] & =  \left[\cM_{X_0},F_{X_0}\right] \in A_*\(\cM_{X_0}\), \\
j_1^! \left[\cM_{\cX},E_{\cX}\right] & =  \left[\cM_{X_1},F_{X_1}\right] \in A_*\(\cM_{X_1}\).
\end{align*}
\end{pro}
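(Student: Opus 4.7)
The plan is to present $j_0$ and $j_1$ as regular closed immersions of codimension one with trivial conormal sheaf and to invoke functoriality of the virtual pullback for compatible triples of obstruction theories.

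First, I would observe that $j_0$ and $j_1$ are obtained by base change from the regular embeddings $0,1\hookrightarrow\aA^1$ along $\cM_\cX\to\cM_{\aA^1}\simeq\aA^1\times\overline{\cM}_{g,n}\to\aA^1$. Each is therefore a regular closed immersion of codimension one whose conormal sheaf is trivial; in particular $L_{\cM_{X_i}/\cM_\cX}\simeq\cO[1]$, and the refined Gysin pullbacks $j_i^!\colon A_*(\cM_\cX)\to A_*(\cM_{X_i})$ are defined on every cycle class.

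Next, I would unpack Definition \ref{skid pot} using the octahedral axiom applied to the factorization
$$\cO\hookrightarrow\mathbb{E}[1]\oplus\cO=E_{X_0/\cX}[-1]\xrightarrow{\partial}j_0^*E_\cX,$$
where $\partial$ is the connecting morphism of~\eqref{exact}. The octahedron produces an exact triangle $j_0^*E_\cX\to F_{X_0}\to\cO[1]\to j_0^*E_\cX[1]$, which together with the morphism $F_{X_0}\to L_{\cM_{X_0}}$ built in Definition \ref{skid pot} and the canonical identification $\cO[1]\simeq L_{\cM_{X_0}/\cM_\cX}$ fits into a morphism to the cotangent-complex triangle of the pair $(\cM_{X_0},\cM_\cX)$. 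This exhibits $(j_0^*E_\cX,F_{X_0},\cO[1])$ as a compatible triple of obstruction theories for $j_0$, with the relative piece equal to the shifted conormal sheaf of the regular embedding.

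Finally, I would invoke functoriality of virtual pullback (Manolache, or equivalently Kim--Kresch--Pantev/Costello): for a compatible triple whose relative obstruction theory is the shifted conormal sheaf of a regular embedding, the associated virtual pullback agrees with the ordinary refined Gysin pullback, so that $j_0^![\cM_\cX,E_\cX]=[\cM_{X_0},F_{X_0}]$, and the verbatim same argument at $1\in\aA^1$ gives the second equality. The main obstacle is verifying that the morphisms used in Definition \ref{skid pot} commute on the nose with the canonical maps from the cotangent triangles, so that the octahedron actually produces a \emph{morphism} of exact triangles; this is a delicate derived-category chase, made subtler by the non-perfectness of $E_{X_0}$, since the cone construction $F_{X_0}$ is designed precisely to correct this by absorbing the Hodge summand $\mathbb{E}[2]$ into the relative obstruction theory.
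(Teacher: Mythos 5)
Your proposal is correct and follows essentially the same route as the paper: the paper also reduces the statement to exhibiting a compatibility datum (a morphism of exact triangles from $j_0^*E_\cX \to F_{X_0} \to q_0^*L_{0/\aA^1}$ to the cotangent triangle of $j_0$) relative to the regular embedding $0 \hookrightarrow \aA^1$, and then invokes the functoriality of virtual classes in the form of Behrend--Fantechi's Proposition 5.10, which is the same mechanism as the Manolache/Kim--Kresch--Pantev functoriality you cite. The only cosmetic difference is that the paper obtains the triangle $j_0^*E_\cX \to F_{X_0} \to \cO[1]$ directly as the (rotated) cone triangle from Definition \ref{skid pot} rather than via the octahedral axiom.
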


\begin{proof}
The first equality is just the definition itself.  For the second equality, we observe the following.

Restricting to some open neighbourhood $U$ of $1$ in $\mathbb{A}^1$, we can assume that the map $p_{|U} \colon \cX_{|U} \to U$ is smooth.  Therefore, the (usual) perfect obstruction theory of $\cM_{X_1}$ coincides with the relative perfect obstruction theory of $q_{|U} \colon \cM_{\cX_{|U}} \to \cM_U$.  The (absolute, yet relative to the moduli of prestable curves as always) perfect obstruction theory of $\cM_{\cX_{|U}}$ is then obtained by composing it with the perfect obstruction theory of $\cM_U$, whose $h^0$ is $\cO$ and $h^{-1}$ is the Hodge bundle $\mathbb{E}$.

We further decompose the perfect obstruction theory of $\cM_U$ along the projection
$$\cM_U \simeq \overline{\cM}_{g,n} \times U \lra U.$$
This relative perfect obstruction theory is given by the obstruction vector bundle $\mathbb{E}^\vee$, because the moduli space is smooth.  As a consequence, the perfect obstruction theory of $\cM_{\cX_{|U}}$ relative to $U$ coincides exactly with $F_{X_1}$, so that we get the equality
\begin{equation*}\pushQED{\qed}
j_1^! \left[\cM_{\cX},E_{\cX}\right] = \left[\cM_{X_1},F_{X_1}\right].\qedhere \popQED
	\end{equation*}
\renewcommand{\qed}{}  
\end{proof}

\begin{lem}
The morphism $q \colon \cM_{\cX} \to \aA^1$ is proper.  Moreover, we have commutative diagrams
\begin{center}
	\begin{tikzpicture}[scale=0.75]	
	\node (A) at (0,1.5) {$\cM_{X_1}$};
	\node (B) at (2.5,1.5) {$\cM_{\cX}$};
	\node (C) at (2.5,0) {$\cX$,};
	\node (D) at (0,0) {$X_1$};
	\draw[->] (D) -- (C);
	\draw[->] (A) -- (B);
	\draw[->] (A) -- (D);
	\draw[->] (B) -- (C);
	\node[above] at (1.25,1.5){$j_1$};
	\node[right] at (2.5,0.75){$\mathrm{ev}_\cX$};
	\node[left] at (0,0.75){$\mathrm{ev}_{X_1}$};
	\node[below] at (1.25,0){$i_1$};
	\node[] at (1.25,0.75){$\circlearrowleft$};
	\node (A') at (6+0,1.5) {$\cM_{X_1}$};
	\node (B') at (6+2.5,1.5) {$\cM_{\cX}$};
	\node (C') at (6+1.25,0) {$\bigcup_{g,n} \overline{\cM}_{g,n}$\,,};
	\draw[->] (A') -- (B');
	\draw[->] (A') -- (C');
	\draw[->] (B') -- (C');
	\node[above] at (6+1.25,1.5){$j_1$};
	\node[right] at (6+2,0.75){$r_\cX$};
	\node[left] at (6+0.5,0.75){$r_{X_1}$};
	\node[below] at (1.25,0){$i_1$};
	\node[] at (6+1.25,0.85){$\circlearrowleft$};
	\end{tikzpicture}
\end{center}
where the maps $\mathrm{ev}_\cX$ and $\mathrm{ev}_{X_1}$ are the evaluation maps and the maps $r_\cX$ and $r_{X_1}$ remember only the coarse curve and stabilize it.  We also have the same commutative diagrams when we replace $X_1$ by $X_0$.
\end{lem}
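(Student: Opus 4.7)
The plan is to establish the two assertions separately, both of which reduce to standard facts about moduli stacks of stable maps together with functoriality arguments.

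First, for properness of $q$, I would rely on the (implicit) fact that the total family $p \colon \cX \to \aA^1$ is proper over $\aA^1$; this is natural given the setup of the paper, and in particular is consistent with the stated properness of the fibers $X_0$ and $X_1$. Under this assumption, the foundational work of Abramovich--Vistoli on moduli of (twisted) stable maps yields that the relative moduli stack $\cM_{\cX/\aA^1}$ is proper over $\aA^1 \times \overline{\cM}_{g,n}$. Composing with the projection $\aA^1 \times \overline{\cM}_{g,n} \to \aA^1$, which is proper since $\overline{\cM}_{g,n}$ is proper, gives that $q \colon \cM_\cX \to \aA^1$ is proper as claimed.

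For the commutative diagrams, I would argue purely by functoriality. From the fiber square introduced earlier, $\cM_{X_1} = \cM_\cX \times_{\aA^1} \{1\}$ is realized by $j_1$ as the sub-moduli stack parameterizing stable maps $f \colon C \to \cX$ whose image lies in $X_1 = p^{-1}(1)$. Under this identification, evaluating a given marked point of $C$ produces the same geometric point whether we view the target as $X_1$ or as $\cX$ via $i_1 \circ f$, which is precisely the content of the first diagram. For the second diagram, stabilization of the coarse source curve depends only on $C$ itself and not on the target of $f$, so $r_\cX \circ j_1$ and $r_{X_1}$ agree tautologically.

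The case where $X_1$ is replaced by $X_0$ proceeds by the identical argument, with $1 \in \aA^1$ replaced by $0 \in \aA^1$ and the immersion $i_1$ replaced by $i_0$; in particular no smoothness of the target is used, so the singular fiber poses no additional difficulty. I do not expect any serious obstacle: the only careful point is to unwind the moduli theoretic conventions (twisted stable maps, isotropy at markings, genus and degree labels) and to verify that Abramovich--Vistoli's properness theorem is being applied in its relative form over the base $\aA^1$, which is standard.
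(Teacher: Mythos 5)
Your approach is the same as the paper's: properness comes from Abramovich--Vistoli's properness theorem for (twisted) stable maps in its relative form over $\aA^1$, and the commutativity of the two diagrams is a formal/tautological consequence of the fiber-square description of $\cM_{X_1}$. The commutativity part and the reduction of the $X_0$ case to the same argument are fine as written.

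There is one small but genuine gap in the properness argument. Abramovich--Vistoli gives properness of the moduli stack of \emph{relative} stable maps $\cM_{g,n}(\cX/\aA^1)$ over $\aA^1$ (assuming, as you correctly note, that $p$ is proper). But $q$ is defined on the \emph{absolute} moduli stack $\cM_{\cX}$ of stable maps to the total space $\cX$, and $\cX$ itself is not proper, so properness of $\cM_{\cX}$ is not automatic; you pass from ``$\cM_{\cX/\aA^1}$ is proper over $\aA^1$'' to ``$q\colon \cM_\cX \to \aA^1$ is proper'' without justifying that these two moduli stacks agree. The missing observation --- which is exactly the point the paper's proof makes explicit --- is that any morphism from a proper connected nodal curve to the affine line is constant, so every stable map $f\colon \cC \to \cX$ lands in a single fiber of $p$, and hence $\cM_{\cX} \simeq \cM_{g,n}(\cX/\aA^1)$. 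With that one line added, your argument closes.
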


\begin{proof}
Since every morphism from a nodal curve to the affine line is a contraction to a point, we then have an isomorphism between the moduli space $\cM_{\cX}$ of stable maps to $\cX$ and the moduli space $\cM_{g,n}(\cX/\aA^1)$ of relative stable maps to the family $p \colon \cX \to \aA^1$.  Therefore, by \cite[Section 8.3]{AbramovichVistoli}, the morphism $q \colon \cM_{\cX} \to \aA^1$ is proper.  The commutativity of the diagrams is obvious.
\end{proof}

\begin{rem}
It is not straightforward to compare curve classes for $\cX$, $X_0$, and $X_1$, because we can have vanishing cycles.  To solve this issue, we introduce an ambient space which contains every fiber of $\cX$.
\end{rem}

\subsubsection{Ambient space}
From now on, we assume we have a smooth proper DM stack $\cP$ with an embedding of $\aA^1$-families $\cX \hookrightarrow \cP \times \aA^1$; \textit{i.e.}~every fiber of $\cX$ lies in $\cP$.  In particular, we have push-forward maps
$$H_2(X_t) \lra H_2(\cP)\quad \text{for every $t \in \aA^1$}.$$
We also have maps $\cM_{X_t} \to \cM_{\cP}$ that we can decompose in terms of curve classes. Precisely, for every $\beta \in H_2(\cP)$, we have
$$\bigsqcup_{\substack{\beta' \in H_2(X_t) ~\textrm{with}\\\beta' = \beta \in H_2(\cP)}} \cM_{X_t}(\beta') \lra \cM_{\cP}(\beta).$$
As a consequence, Proposition~\ref{equality of skid} becomes the following.

\begin{pro}\label{skid 2}
For every genus $g$, number of markings $n$, curve class $\beta \in H_2(\cP)$, and isotropies $\underline{\rho} = (\rho_1, \dotsc, \rho_n)$ in $\cX$, we have
\begin{align*}
j_0^! [\cM_{\cX},E_{\cX}]_{g,n,\beta,\underline{\rho}} & =  \sum_{\substack{\beta_0 \in H_2(X_0) ~\textrm{with} \\ \beta_0 = \beta \in H_2(\cP)}} \left[\cM_{X_0},F_{X_0}\right]_{g,n,\beta_0,\underline{\rho}} \in A_*\(\cM_{X_0}\), \\
j_1^! [\cM_{\cX},E_{\cX}]_{g,n,\beta,\underline{\rho}} & =  \sum_{\substack{\beta_1 \in H_2(X_1) ~\textrm{with} \\ \beta_1 = \beta \in H_2(\cP)}} \left[\cM_{X_1},F_{X_1}\right]_{g,n,\beta_1,\underline{\rho}} \in A_*\(\cM_{X_1}\).
\end{align*}
\end{pro}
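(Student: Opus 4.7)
The plan is to derive Proposition \ref{skid 2} from Proposition \ref{equality of skid} by decomposing both sides according to curve classes, first in $\cX$ and then pushing forward to the common ambient space $\cP$. The proof is essentially bookkeeping, and no substantive obstacle arises.

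\textbf{Step 1: Refine Proposition \ref{equality of skid} by curve class in $\cX$.} The moduli stack $\cM_\cX$ decomposes as a disjoint union of open-and-closed substacks indexed by $(g,n,\underline{\rho})$ together with a curve class $\beta_\cX \in H_2(\cX)$, and the perfect obstruction theory $E_\cX$ of Section \ref{secpot} restricts to a perfect obstruction theory on each connected component (it is constructed functorially from the universal curve and universal map). The virtual cycle $[\cM_\cX, E_\cX]$ therefore splits as a sum over these discrete data, and the same holds for $(\cM_{X_0}, F_{X_0})$. Since the refined Gysin pull-back $j_0^!$ distributes over disjoint unions of the target, the argument of Proposition \ref{equality of skid} applies componentwise to give, for every $\beta_\cX \in H_2(\cX)$,
$$j_0^!\,[\cM_\cX, E_\cX]_{g,n,\beta_\cX,\underline{\rho}} = \sum_{\substack{\beta_0 \in H_2(X_0)\\(i_0)_*\beta_0 = \beta_\cX}} [\cM_{X_0}, F_{X_0}]_{g,n,\beta_0,\underline{\rho}}.$$

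\textbf{Step 2: Push forward to $H_2(\cP)$.} The two factorizations of $X_0 \hookrightarrow \cP$, namely via $X_0 \hookrightarrow \cX \hookrightarrow \cP \times \aA^1 \to \cP$ and directly through $X_0 \hookrightarrow \cP \times \aA^1 \to \cP$, agree on the nose, so functoriality of push-forward gives a commutative triangle $H_2(X_0) \to H_2(\cX) \to H_2(\cP)$ equal to the direct map $H_2(X_0) \to H_2(\cP)$. Now sum the refined equality of Step 1 over all $\beta_\cX \in H_2(\cX)$ whose image in $H_2(\cP)$ equals a fixed $\beta$. By the convention built into the statement, the left-hand side becomes exactly $j_0^!\,[\cM_\cX, E_\cX]_{g,n,\beta,\underline{\rho}}$. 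On the right, the iterated sum over $\beta_\cX$ and then $\beta_0$ collapses, via the commutative triangle, to the single sum
$$\sum_{\substack{\beta_0 \in H_2(X_0)\\\beta_0 = \beta \in H_2(\cP)}} [\cM_{X_0}, F_{X_0}]_{g,n,\beta_0,\underline{\rho}},$$
which is the first claimed identity.

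The same argument, applied to the right-hand fiber diagram and to Definition \ref{skid potX_1} in place of Definition \ref{skid pot}, gives the identity for $X_1$ and $j_1$. The only ingredients beyond Proposition \ref{equality of skid} are the additivity of perfect obstruction theories, virtual classes, and refined Gysin maps over disjoint unions of components, together with functoriality of push-forward in homology, all of which are standard; no new geometric input is needed.
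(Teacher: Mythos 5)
Your proposal is correct and follows essentially the same route as the paper: the paper obtains Proposition \ref{skid 2} directly from Proposition \ref{equality of skid} by noting that it holds componentwise for the decomposition of the moduli spaces by $(g,n,\underline{\rho})$ and by curve classes measured in the common ambient space $\cP$ (precisely to sidestep the vanishing-cycle issue in comparing $H_2(X_0)$, $H_2(\cX)$, and $H_2(X_1)$). Your extra intermediate decomposition through $H_2(\cX)$ is harmless bookkeeping and collapses to the paper's statement via the commutative triangle $H_2(X_t)\to H_2(\cX)\to H_2(\cP)$.
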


\subsubsection{Gromov--Witten cycles}
In this subsection, we fix a genus $g$, a number of markings $n$, isotropies $\underline{\rho} = (\rho_1, \dotsc, \rho_n)$ in $\cX$, and curve classes $\beta \in H_2(\cP)$, $\beta_0 \in H_2(X_0)$, and $\beta_1 \in H_2(X_1)$ satisfying
$${i_0}_*(\beta_0) = \beta \in H_2(\cP) \quad \textrm{and} \quad {i_1}_*(\beta_1) = \beta \in H_2(\cP).$$
We also fix $\alpha_1, \dotsc, \alpha_n \in A^*(I\cX)$ such that
$$\alpha_i \in A^*\(\cX_{\rho_i}\) \subset A^*(I\cX),$$
where $\cX_{\rho_i}$ is the component of the inertia stack of $\cX$ with isotropy $\rho_i$.
Furthermore, we denote by $\psi_i$ the usual psi-class on the moduli space of stable curves, \textit{i.e.}~the first Chern class of the cotangent line of the curve at the $\supth{i}$ marking, and by $\lambda_g$ the top Chern class of the (pull-back of the) Hodge bundle.

\begin{dfn}
A Gromov--Witten cycle of $X_1$ is
$$\left[ \alpha_1,\dotsc,\alpha_n \right]^{X_1}_{g,n,\beta_1} := {q_1}_* \( \left[\cM_{X_1},E_{X_1}\right]_{g,n,\beta_1,\underline{\rho}} \cdot \prod_{i=1}^n j_1^* \mathrm{ev}_{\cX}^*(\alpha_i) \) \in A_*\(\overline{\cM}_{g,n}\).$$
A Hodge--Gromov--Witten cycle of $X_1$ is
$$\left[ \alpha_1,\dotsc,\alpha_n \right]^{X_1,\lambda_g}_{g,n,\beta_1} := {q_1}_* \(\lambda_g \cdot \left[\cM_{X_1},E_{X_1}\right]_{g,n,\beta_1,\underline{\rho}} \cdot \prod_{i=1}^n j_1^*\mathrm{ev}_{\cX}^*(\alpha_i) \) \in A_*\(\overline{\cM}_{g,n}\).$$
A relative Gromov--Witten cycle of $p \colon \cX \to \aA^1$ is
$$\left[ \alpha_1,\dotsc,\alpha_n \right]^{\cX, \mathrm{rel}}_{g,n,\beta} := q_* \( [\cM_{\cX},E_{\cX}]_{g,n,\beta,\underline{\rho}} \cdot \prod_{i=1}^n \mathrm{ev}_{\cX}^*(\alpha_i)\) \in A_*\(\cM_{\aA^1}\) \simeq A_*\(\overline{\cM}_{g,n}\).$$
A regularized Gromov--Witten cycle of $X_0$ is
$$\left[ \alpha_1,\dotsc,\alpha_n \right]^{X_0,\mathrm{reg}}_{g,n,\beta_0} := {q_0}_* \(\left[\cM_{X_0},F_{X_0}\right]_{g,n,\beta_0,\underline{\rho}} \cdot \prod_{i=1}^n j_0^*\mathrm{ev}_{\cX}^*(\alpha_i)\) \in A_*\(\overline{\cM}_{g,n}\).$$
A regularized Gromov--Witten cycle of $X_1$ is
$$\left[ \alpha_1,\dotsc,\alpha_n \right]^{X_1,\mathrm{reg}}_{g,n,\beta_1} := {q_1}_* \( \left[\cM_{X_1},F_{X_1}\right]_{g,n,\beta_1,\underline{\rho}} \cdot \prod_{i=1}^n j_1^*\mathrm{ev}_{\cX}^*(\alpha_i)\) \in A_*\(\overline{\cM}_{g,n}\).$$
\end{dfn}

\begin{dfn}\label{ambient theory}
By \textit{ambient theory}, we mean the special case where we take isotropies $\underline{\rho}$ in $\cP$ and insertions
$$\alpha_i \in A^*\(\cP_{\rho_i}\) \subset A^*(I\cP) \lra A^*(I\cX),$$
where pull-back is taken under the map $\cX \hookrightarrow \cP \times \aA^1 \to \cP$.
\end{dfn}

From Lemma~\ref{Hodge}, we see that
\begin{equation*}
\left[ \alpha_1,\dotsc,\alpha_n \right]^{X_1,\mathrm{reg}}_{g,n,\beta_1} =  (-1)^g \left[ \alpha_1,\dotsc,\alpha_n \right]^{X_1,\lambda_g}_{g,n,\beta_1}, \\
\end{equation*}
and from Proposition~\ref{skid 2}, we obtain
\begin{align}\label{equal}
\left[ \alpha_1,\dotsc,\alpha_n \right]^{\cX,\mathrm{rel}}_{g,n,\beta} \ & =  \sum_{\substack{\beta_1 \in H_2(X_1) ~\textrm{with} \\ \beta_1 = \beta \in H_2(\cP)}} \left[ \alpha_1,\dotsc,\alpha_n \right]^{X_1,\mathrm{reg}}_{g,n,\beta_1} \nonumber \\
& =  \sum_{\substack{\beta_0 \in H_2(X_0) ~\textrm{with} \\ \beta_0 = \beta \in H_2(\cP)}} \left[ \alpha_1,\dotsc,\alpha_n \right]^{X_0,\mathrm{reg}}_{g,n,\beta_0}.
\end{align}
Indeed, the two diagrams in~\eqref{diagrams} are Cartesian, the bottom horizontal map $\overline{\cM}_{g,n} \to \cM_{\aA^1} \simeq \aA^1 \times \overline{\cM}_{g,n}$ is a regular embedding, and the vertical maps are proper.  Therefore, for any cycle $\alpha$ on $\cM_{\cX}$, we get
$${q_k}_*j_k^!(\alpha) = k^*q_*(\alpha), \quad k \in \left\lbrace 0,1\right\rbrace;$$
see \cite[Theorem $6.2$]{Fulton}.
Since the map $k^*$ is the identity in Chow rings,
we get \eqref{equal}.

We sum this up with the following statement.

\begin{thm}[Regular specialization theorem]\label{skid}
Let $\cX$ be a regular family over $\aA^1$ whose fibers are embedded in a smooth proper DM stack $\cP$.  For every genus $g$, number of markings $n$ such that $2g-2+n>0$, isotropies $\underline{\rho} = (\rho_1, \dotsc, \rho_n)$ in $X_1$, curve class $\beta \in H_2(\cP)$, and insertions $\alpha_1, \dotsc, \alpha_n \in A^*(I\cX)$ with $\alpha_i \in A^*({\cX}_{\rho_i})$, in $A_*\(\overline{\cM}_{g,n}\)$ we have 
\begin{equation*}
\sum_{\substack{\beta_1 \in H_2(X_1) ~\textrm{with}\\\beta_1 = \beta \in H_2(\cP)}} \left[ \alpha_1,\dotsc,\alpha_n \right]^{X_1,\lambda_g}_{g,n,\beta_1} = (-1)^g \left[ \alpha_1,\dotsc,\alpha_n \right]^{\cX,\mathrm{rel}}_{g,n,\beta}.
\end{equation*}
\end{thm}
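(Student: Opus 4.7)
The plan is to deduce the theorem as an essentially formal consequence of Proposition \ref{skid 2} and Lemma \ref{Hodge}, provided we first establish that the two Gysin pullbacks $j_0^!$ and $j_1^!$ of the relative virtual cycle (capped with insertions) yield the same class after pushing forward to $\overline{\cM}_{g,n}$.

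First I would form the integrated relative class
\begin{equation*}
\gamma := [\cM_{\cX}, E_{\cX}]_{g,n,\beta,\underline{\rho}} \cdot \prod_{i=1}^n \mathrm{ev}_{\cX}^*(\alpha_i) \in A_*(\cM_{\cX}),
\end{equation*}
and push it forward via $q$. Properness of $q \colon \cM_{\cX} \to \cM_{\aA^1}$, established above through the identification with the moduli space of relative stable maps, ensures the pushforward lies in $A_*(\cM_{\aA^1}) \simeq A_*(\aA^1 \times \overline{\cM}_{g,n})$.

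Next I would apply base change for the refined Gysin pullback to the Cartesian squares defining $j_0$ and $j_1$. Since each bottom arrow $\{t\} \hookrightarrow \aA^1$ is a regular embedding and $q$ is proper, one gets
\begin{equation*}
{q_t}_* j_t^!(\gamma) = (i'_t)^! q_*(\gamma) \in A_*(\overline{\cM}_{g,n}), \qquad t \in \{0,1\}.
\end{equation*}
By homotopy invariance of Chow groups, the refined pullback $(i'_t)^! \colon A_*(\aA^1 \times \overline{\cM}_{g,n}) \to A_*(\overline{\cM}_{g,n})$ is independent of $t$, hence ${q_0}_* j_0^!(\gamma) = {q_1}_* j_1^!(\gamma)$. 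Using that $j_t$ is itself a regular embedding (as a pullback of $\{t\} \hookrightarrow \aA^1$), the identity $j_t^!(a \cdot \eta) = j_t^!(a) \cdot j_t^*(\eta)$ distributes the Gysin pullback past the cohomological insertions. Combined with Proposition \ref{skid 2}, which unfolds $j_t^![\cM_{\cX}, E_{\cX}]_{g,n,\beta,\underline{\rho}}$ as the sum over lifts $\beta_t \in H_2(X_t)$ of $\beta$ of the regularized virtual cycles $[\cM_{X_t}, F_{X_t}]_{g,n,\beta_t,\underline{\rho}}$, this produces the equality
\begin{equation*}
\sum_{\beta_1} [\alpha_1,\ldots,\alpha_n]^{X_1,\mathrm{reg}}_{g,n,\beta_1} = \sum_{\beta_0} [\alpha_1,\ldots,\alpha_n]^{X_0,\mathrm{reg}}_{g,n,\beta_0} \in A_*(\overline{\cM}_{g,n}).
\end{equation*}

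Finally, Lemma \ref{Hodge} rewrites each $[\cM_{X_1}, F_{X_1}]_{g,n,\beta_1,\underline{\rho}}$ as $(-1)^g\,\lambda_g \cap [\cM_{X_1}, E_{X_1}]_{g,n,\beta_1,\underline{\rho}}$, converting the left-hand side to $(-1)^g$ times the sum of Hodge--Gromov--Witten cycles and yielding the claim. The main delicate point I expect is the base-change plus homotopy-invariance step carried out on stacky moduli spaces decorated with isotropy types; once one observes that $j_0$ and $j_1$ arise as pullbacks of regular embeddings inside the smooth curve $\aA^1$, this reduces to the standard compatibility of refined Gysin pullback with proper pushforward (\cite{BF}), after which the remainder of the argument is essentially bookkeeping of curve classes and isotropies, already packaged in equation \eqref{equal}.
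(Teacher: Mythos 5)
Your proposal is correct and follows essentially the same route as the paper: Proposition \ref{skid 2} identifies the two Gysin pullbacks of the relative cycle with the sums of regularized cycles, the compatibility of refined Gysin pullback with proper pushforward plus homotopy invariance of $A_*(\aA^1 \times \overline{\cM}_{g,n})$ identifies the two pushforwards (this is exactly what the paper packages into the isomorphism $A_*(\cM_{\aA^1}) \simeq A_*(\overline{\cM}_{g,n})$ and equation \eqref{equal}), and Lemma \ref{Hodge} supplies the factor $(-1)^g \lambda_g$. You merely spell out the base-change step more explicitly than the paper does; the only small imprecision is the parenthetical claim that $j_t$ is a regular embedding because it is a pullback of one, which is not needed since $j_t^!$ is the refined Gysin map attached to $\{t\} \hookrightarrow \aA^1$ and commutes with Chern class operations regardless.
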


\begin{rem}
We might not have access to all insertions for $X_1$, as it is possible that the pull-back map $A^*(\cX) \to A^*(X_1)$ is not surjective.  However, it is easy to work with the ambient theory, \textit{i.e.}~insertions pulled back from $A^*(I\cP)$.
\end{rem}

\begin{cor}[Regular specialization theorem in genus zero]\label{skidg=0}
Under the same assumptions as before,  in $A_*\(\overline{\cM}_{0,n}\)$ we have 
\begin{equation*}
\sum_{\substack{\beta_1 \in H_2(X_1) ~\textrm{with}\\\beta_1 = \beta \in H_2(\cP)}} \left[ \alpha_1,\dotsc,\alpha_n \right]^{X_1}_{0,n,\beta_1} =\left[ \alpha_1,\dotsc,\alpha_n \right]^{\cX,\mathrm{rel}}_{0,n,\beta}.
\end{equation*}
\end{cor}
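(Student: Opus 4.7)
The corollary is a direct genus-zero specialization of Theorem \ref{skid}. The plan is to simply substitute $g=0$ into that theorem and unravel the two simplifications that occur in this case, so no new geometry is required; everything has already been set up in Section \ref{secpot} and the preceding propositions.

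The first simplification is that the Hodge bundle $\mathbb{E} = \pi_*\omega_\pi$ has rank $g=0$ in genus zero, hence its top Chern class is $\lambda_0 = c_0(\mathbb{E}) = 1$. Unraveling the definition of the Hodge--Gromov--Witten cycle, this gives
$$[\alpha_1,\dotsc,\alpha_n]^{X_1,\lambda_g}_{0,n,\beta_1} = {q_1}_*\!\left( [\cM_{X_1},E_{X_1}]_{0,n,\beta_1,\underline{\rho}} \cdot \prod_{i=1}^n j_1^* \mathrm{ev}_{\cX}^*(\alpha_i)\right) = [\alpha_1,\dotsc,\alpha_n]^{X_1}_{0,n,\beta_1}.$$
The second simplification is the trivial $(-1)^g = 1$. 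Plugging both into the conclusion of Theorem \ref{skid} recovers verbatim the equality claimed by the corollary.

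Alternatively, one can bypass Lemma \ref{Hodge} entirely and invoke the more basic equation \eqref{equal} directly. Indeed, by Definition \ref{skid potX_1}, $F_{X_1} = E_{X_1} \oplus \mathbb{E}[1]$, and in genus zero $\mathbb{E} = 0$, so $F_{X_1} \simeq E_{X_1}$ as perfect obstruction theories on $\cM_{X_1}$. Consequently the regularized and the Gromov--Witten virtual cycles of $X_1$ coincide in $A_*(\cM_{X_1})$, which identifies $[\cdot]^{X_1,\mathrm{reg}}_{0,n,\beta_1}$ with $[\cdot]^{X_1}_{0,n,\beta_1}$, and \eqref{equal} finishes the argument.

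No genuine obstacle is expected in either route: the corollary reduces to rewriting already-established identities once the genus-zero vanishing of the Hodge bundle is noted. The only minor point to keep in mind is the condition $2g-2+n>0$ inherited from Theorem \ref{skid}, which in genus zero merely requires $n\geq 3$ and is implicit in the statement of the corollary.
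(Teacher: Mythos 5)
Your proposal is correct and matches the paper's intended argument: the corollary is stated without proof precisely because it follows from Theorem \ref{skid} (equivalently from Equation \eqref{equal}) once one notes that $\mathbb{E}=0$ in genus zero, so $\lambda_0=1$, $(-1)^0=1$, and $F_{X_1}\simeq E_{X_1}$ (cf.\ Remark \ref{0skid} and the discussion in the introduction). Both of your routes are valid and essentially identical to what the author has in mind.
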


\subsubsection{Torus action}
In this paragraph, we assume we have a torus action on the family $\cX$ over $\aA^1$ leaving the fiber $X_0$ invariant.  Denote by $T$ the torus.  Then, we get a $T$-action on the moduli spaces $\cM_{\cX}$ and $\cM_{X_0}$, and the perfect obstruction theories $E_\cX$ on $\cM_{\cX}$ and $F_{X_0}$ on $\cM_{X_0}$ are also $T$-equivariant.

\begin{nt}
For a DM stack $\cY$ with a $T$-action, we denote by $\iota \colon \cY_T \hookrightarrow \cY$ the fixed locus.  For a $T$-equivariant perfect obstruction theory $E_\cY$ on $\cY$, we denote by $N^\vir_{T}$ the moving part of the dual of its restriction to the fixed locus $\cY_T$, and by $E_{T}$ the fixed part, which is a perfect obstruction theory on the fixed locus $\cY_T$.
\end{nt}

\begin{pro}[Localization formula, \textit{cf.} {\cite[Equation (8)]{GraberPandha}}]\label{GraberPandha}
Let $\cY$ be a DM stack with a $T$-action and a $T$-equivariant perfect obstruction theory $E \to L_\cY$.  Let $A^{T}_*(\cY)$ denote the $T$-equivariant Chow ring of\, $\cY$ and $\underline{t}$ denote the $T$-equivariant parameters.  Introduce the ring
$$A^T_*(\cY)_\loc := A_*^T(\cY) \otimes_{\QQ[\underline{t}]} \QQ\left[\underline{t}^{\pm1}\right]$$
obtained by inverting the equivariant parameters $\underline{t}$.  Then the virtual localization formula is
\begin{equation*}
[\cY,E] = \iota_* \(\cfrac{[\cY_T,E_T]}{e_T\(N^\vir_{T}\)}\),
\end{equation*}
where $e_T$ denotes the $T$-equivariant Euler class.
\end{pro}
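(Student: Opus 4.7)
Since this is Graber and Pandharipande's virtual localization theorem \cite{GraberPandha}, the natural plan is to sketch the strategy of their proof rather than reprove it: the result extends the classical Atiyah--Bott localization formula from smooth schemes to virtual cycles defined by perfect obstruction theories in the sense of Behrend--Fantechi \cite{BF}. First I would verify that the fixed part $E_T$ of the restriction $\iota^* E$ is a perfect obstruction theory on $\cY_T$. Because $T$ is reductive, the $T$-equivariant complex $\iota^* E$ splits canonically into fixed and moving parts, and the same splitting applies to $\iota^* L_{\cY}$ via the conormal sequence of the $T$-equivariant regular immersion $\iota$. Restricting the $h^0$-isomorphism and $h^{-1}$-surjectivity of the obstruction theory to the fixed summand yields $E_T \to L_{\cY_T}$, and simultaneously identifies the moving complement with the virtual normal bundle $N^\vir_T$, whose equivariant Euler class is invertible in $A^T_*(\cY)_\loc$.

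The heart of the argument is the comparison between intrinsic normal cones. The cone $\iota^* \mathfrak{C}_{\cY} \subset h^1/h^0(\iota^* E^\vee)$ is $T$-invariant, and the essential structural input is the identification of its $T$-fixed part with $\mathfrak{C}_{\cY_T} \subset h^1/h^0(E_T^\vee)$. Following Graber--Pandharipande, one proves this locally after fixing a $T$-equivariant embedding of $\cY$ into a smooth $T$-stack, and then globalizes through the deformation to the normal cone of the closed immersion $\cY_T \hookrightarrow \cY$. Once this cone comparison is in place, intersecting with the zero section in $h^1/h^0(\iota^* E^\vee)$ decomposes into a fixed contribution producing $[\cY_T, E_T]$ and a moving contribution producing $1/e_T(N^\vir_T)$ by the classical Atiyah--Bott/Edidin--Graham localization applied to the two-term $T$-equivariant complex $N^\vir_T$.

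The main obstacle is the cone-splitting step, which requires a $T$-equivariant embedding of $\cY$ into a smooth $T$-stack. This is an explicit hypothesis in \cite{GraberPandha}; in the applications of interest here the moduli stacks $\cM_\cX$ and $\cM_{X_0}$ admit such embeddings (for instance via Kresch's stratifications, or by embedding into moduli of prestable curves together with auxiliary line bundles), so the formula applies verbatim to the regularized obstruction theories constructed in the previous subsection. Everything outside that cone comparison reduces to $T$-representation-theoretic diagram chases and the functoriality of the Behrend--Fantechi construction under the Cartesian decomposition $\iota^* E \simeq E_T \oplus E^{\mathrm{mov}}$.
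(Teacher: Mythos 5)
The paper offers no proof of this proposition: it is imported verbatim as Equation (8) of Graber--Pandharipande, so there is nothing internal to compare your argument against. Your sketch is an accurate outline of the strategy of the cited proof --- fixed/moving splitting of $\iota^*E$, identification of $E_T$ as a perfect obstruction theory on $\cY_T$ and of the moving part with $N^{\vir}_T$, the cone comparison via an equivariant embedding into a smooth $T$-stack, and the reduction to Atiyah--Bott/Edidin--Graham --- and you correctly flag the global equivariant embedding as the substantive hypothesis. The only imprecision is calling $\iota \colon \cY_T \hookrightarrow \cY$ a regular immersion: $\cY$ is not assumed smooth here, so the splitting of $\iota^*L_{\cY}$ should be phrased via the equivariant structure of the cotangent complex rather than a conormal sequence of a regular immersion; this does not affect the argument.
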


We refer to \cite{Edidin} for a detailed construction of the equivariant Chow ring.

\begin{rem}
  In our situation, the fixed locus $\cX_T$ lies in the central fiber $X_0$, and we have $\cX_T = {X_0}_T$. Moreover, $\cX_T$
  is a smooth DM stack, and we denote it by $X_T$.
\end{rem}

\begin{thm}[Equivariant regular specialization theorem]\label{skid equiv}
Let $\cX$ be a $T$-equivariant regular family over $\aA^1$ whose fibers are embedded in a smooth proper DM stack $\cP$ and where the torus action leaves the central fiber invariant.  For every genus $g$, number of markings $n$ such that $2g-2+n>0$, isotropies $\underline{\rho} = (\rho_1, \dotsc, \rho_n)$ in $X_1$, curve class $\beta \in H_2(\cP)$, and insertions $\alpha_1, \dotsc, \alpha_n \in A^*(I\cX)$ with $\alpha_i \in A^*({\cX}_{\rho_i})$ admitting a $T$-equivariant lifting, we have
\begin{equation*}
\sum_{\substack{\beta_1 \in H_2(X_1) ~\textrm{with}\\\beta_1 = \beta \in H_2(\cP)}} \left[ \alpha_1,\dotsc,\alpha_n \right]^{X_1,\lambda_g}_{g,n,\beta_1}
 =  \lim_{\underline{t} \to 0} (-1)^g \int_{[{\cM_{X_T}},E_T]_{g,n,\beta}} \cfrac{\prod_{i=1}^n \mathrm{ev}_T^*(\alpha_i)}{e\(N_T^\vir\)},
\end{equation*}
where $\mathrm{ev}_T = \mathrm{ev}_{\cX} \circ j_0 \circ \iota_{{\cM_{X_T}}}$ and $r_T = r_{X_0} \circ \iota_{{\cM_{X_T}}}$, and $[{\cM_{X_T}},E_T]$ is the Gromov--Witten virtual fundamental cycle of the moduli space of stable maps to the smooth DM stack $X_T$.
\end{thm}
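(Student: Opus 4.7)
The plan is to combine the non-equivariant Regular Specialization Theorem \ref{skid} with Graber--Pandharipande's virtual localization applied to the regularized cycle $[\cM_{X_0}, F_{X_0}]$.

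First, Theorem \ref{skid} already provides the non-equivariant identity
\begin{equation*}
\sum_{\beta_1} [\alpha_1,\dotsc,\alpha_n]^{X_1,\lambda_g}_{g,n,\beta_1} = (-1)^g \sum_{\beta_0} [\alpha_1,\dotsc,\alpha_n]^{X_0,\mathrm{reg}}_{g,n,\beta_0},
\end{equation*}
so the task reduces to rewriting each regularized cycle on the right via equivariant localization on the $T$-fixed locus. Since $\cX$, $F_{X_0}$, and the insertions $\alpha_i$ all carry compatible $T$-equivariant structures, the cycle $[\cM_{X_0}, F_{X_0}]$ admits an equivariant refinement in $A^T_*(\cM_{X_0})$ whose non-equivariant limit $\underline{t} \to 0$ recovers the original cycle.

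Next, I would apply the virtual localization formula to the $T$-equivariant perfect obstruction theory $F_{X_0}$. By the remark preceding the theorem, the $T$-fixed locus of $\cM_{X_0}$ coincides with $\cM_{X_T}$, and localization yields
\begin{equation*}
[\cM_{X_0}, F_{X_0}] = \iota_* \(\cfrac{[\cM_{X_T}, F_{X_0,T}]}{e(N^\vir_T)}\),
\end{equation*}
where $F_{X_0,T}$ and $N^\vir_T$ denote the fixed and moving parts of $F_{X_0}|_{\cM_{X_T}}$.

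The main obstacle, and the essential computation, is to identify the fixed part $F_{X_0,T}$ with the standard Gromov--Witten obstruction theory $E_{X_T}$ of the smooth DM stack $X_T$. To this end, I would analyze the defining exact triangle $\cO \to j_0^* E_\cX \to F_{X_0}$: the line bundle $\cO$ arises from the conormal direction to $0 \in \aA^1$ and therefore carries the non-trivial $T$-weight of the action on $\aA^1$, so it lies entirely in the moving part. Combined with the $T$-equivariant decomposition $T_\cX|_{X_T} = T_{X_T} \oplus N_{X_T/\cX}$, this forces the fixed part of $(j_0^* E_\cX)|_{\cM_{X_T}}$ to coincide with $E_{X_T}$, hence $F_{X_0,T} \simeq E_{X_T}$. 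In particular, the Hodge bundle contribution hidden inside $E_{X_0/\cX}$ is entirely absorbed into $N^\vir_T$ through the normal direction $N_{X_0/\cX}|_{X_T}$, which has non-trivial $T$-weight.

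Finally, I would substitute the localization formula into the equivariant refinement of the first identity, push forward using the compatibilities $q_0 \circ \iota = r_T$ and $\ev_\cX \circ j_0 \circ \iota = \ev_T$, and take the limit $\underline{t} \to 0$ to recover the claimed equality.
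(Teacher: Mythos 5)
Your proof is correct and follows essentially the same route the paper intends: the paper states this theorem without a separate proof, as the immediate combination of Theorem \ref{skid} with the virtual localization formula applied to the $T$-equivariant perfect obstruction theory $F_{X_0}$ on $\cM_{X_0}$. Your identification of the fixed part of $F_{X_0}|_{\cM_{X_T}}$ with $E_{X_T}$ --- the weighted $\cO$ from the conormal of $0\in\aA^1$ being purely moving, and the Hodge-bundle term being absorbed into $N^\vir_T$ --- correctly supplies the one step the paper leaves implicit in its assertion that $[\cM_{X_T},E_T]$ is the Gromov--Witten cycle of $X_T$.
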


\begin{proof}
We start with the equality given in Theorem~\ref{skid}.  Then, we notice that the right-hand side is the non-equivariant limit of its well-defined $T$-equivariant enhancement.  Finally, we use the virtual localization formula to conclude; see Proposition~\ref{GraberPandha}.
\end{proof}

\begin{rem}
The regular specialization theorem and its equivariant version work as well with a regular family defined over an affine basis $\aA^m$. Precisely, we then have to multiply the virtual cycle by the $\supth{m}$ power of the Hodge class. Note that it is only interesting in genus zero, see Section~\ref{secinv}, as for positive genus the power of the Hodge class vanishes.
\end{rem}

\section{Equivariant quantum Lefschetz theorem}\label{QLsection}
In this section, we state an `equivariant quantum Lefschetz' theorem which will be useful for computations in the next section.

\subsection{$\boldsymbol{\CC^*}$-localization versus torus localization}
Let $\cX \hookrightarrow \cP$ be an embedding of smooth DM stacks equipped with a $\CC^*$-action.
We assume that
\begin{itemize}
\item the $\CC^*$-fixed loci of $\cX$ and of $\cP$ are equal;
\item $\cP$ is equipped with a torus action $T = (\CC^*)^N$, extending the $\CC^*$-action by an embedding $\CC^* \hookrightarrow T$;
\item the normal bundle of $\cX \hookrightarrow \cP$ is the pull-back of a $T$-equivariant vector bundle $\cN$ over $\cP$;
\item the vector bundle $\cN$ is convex up to two markings;\footnote{This notion is close to the so-called \textit{weak convexity} defined in the context of quasi-maps in \cite{Shoemaker}. However, it differs from their definition, mainly because we do not twist the vector bundle $\cN$ by one of the two markings here.} \textit{i.e.}~for every stable map $f \colon \cC \to \cP$, where $\cC$ is a smooth genus zero orbifold curve with at most two markings, we have $H^1(\cC,f^*\cN)=0$.
\end{itemize}

First, we look at the $\CC^*$-fixed loci of the moduli spaces of stable maps, and we find the following fibered diagram: 
\begin{center}
	\begin{tikzpicture}[scale=0.75]	
	\node (A') at (0,1.5) {$\cM(\cX)^{\CC^*}$};
	\node (B') at (3,1.5) {$\cM(\cP)^{\CC^*}$};
	\node (C') at (3,0) {$\cM(\cP)$.};
	\node (D') at (0,0) {$\cM(\cX)$};
	\draw[->] (D') -- (C');
	\draw[->] (A') -- (B');
	\draw[->] (A') -- (D');
	\draw[->] (B') -- (C');
	\draw[-] (1.5-0.1,0.75-0.1) -- (1.5-0.1,0.75+0.1) -- (1.5+0.1,0.75+0.1) -- (1.5+0.1,0.75-0.1) -- (1.5-0.1,0.75-0.1);
	\node[above] at (1.5,1.5){$j$};
	\node[right] at (3,0.75){$\widetilde{\iota}$};
	\node[left] at (0,0.75){$\iota$};
	\node[below] at (1.5,0){$\widetilde{j}$};
	\end{tikzpicture}
\end{center}
Writing $i \colon \cX \hookrightarrow \cP$, we have a $\CC^*$-equivariant short exact sequence
$$0 \lra T_{\cX} \lra i^*T_{\cP} \lra i^*\cN \lra 0,$$
which induces a distinguished triangle for the dual of the perfect obstruction theories of $\cM(\cX)$ and of $\cM(\cP)$
\begin{equation}\label{exactseqobstr}
R\pi_*f^*T_{\cX} \lra R\pi_*f^*T_{\cP} \lra R\pi_*f^*\cN \lra \(R\pi_*f^*T_{\cX}\)[1].
\end{equation}

The term $\cE := R\pi_*f^*\cN$, pulled back to $\cM(\cP)^{\CC^*}$, has a fixed and a moving part, which we denote, respectively, by $\cE_\mathrm{fix}$ and $\cE_\mathrm{mov}$.

\begin{pro}
The fixed part $\cE_\mathrm{fix}$ is a vector bundle over the fixed moduli space $\cM(\cP)^{\CC^*}$.
\end{pro}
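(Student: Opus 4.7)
The plan is to invoke the standard decomposition of the $\CC^*$-fixed locus $\cM(\cP)^{\CC^*}$ into components $\cM_\Gamma$ indexed by decorated graphs $\Gamma$, and then to analyze $\cE = R\pi_* f^*\cN$ over each $\cM_\Gamma$ via the normalization exact sequence for the universal curve. Recall that over $\cM_\Gamma$ the universal curve decomposes as $\cC = \cC^v \cup \cC^e$, where the vertex components $\cC_v$ are contracted to $\CC^*$-fixed points $p_v \in \cP$ and the edge components $\cC_e$ map non-trivially onto the closures of one-dimensional $\CC^*$-orbits. Crucially, each $\cC_e$ is a smooth genus-zero orbifold curve carrying at most two special points, namely the two nodes glued to adjacent vertex components.

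Applying $R\pi_*$ to the normalization sequence
\[
0 \to f^*\cN \to f^*\cN|_{\cC^v} \oplus f^*\cN|_{\cC^e} \to f^*\cN|_{\text{nodes}} \to 0
\]
lets me compute $\cE$ piece by piece. The first key observation is that, because $\cX \hookrightarrow \cP$ is $\CC^*$-equivariant and the fixed loci coincide, the equality of weight-$0$ tangent spaces $T_{p_v}\cP^{\CC^*}=T_{p_v}\cX^{\CC^*}$ forces $\cN|_{p_v}$ to have no $\CC^*$-weight-zero summand at every fixed point $p_v$. Consequently, each vertex contribution $H^\bullet(\cC_v,\cO_{\cC_v})\otimes\cN|_{p_v}$ and each node contribution $\cN|_{p_v}$ is entirely moving, and thus drops out of $\cE_\mathrm{fix}$.

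What remains is the fixed part of the edge contribution $\bigoplus_e R\pi_*(f^*\cN|_{\cC_e})$. Since each $\cC_e$ is a smooth genus-zero orbifold curve with at most two markings, the convexity-up-to-two-markings hypothesis on $\cN$ yields $H^1(\cC_e,f_e^*\cN)=0$. Cohomology and base change then guarantee that $R\pi_*(f^*\cN|_{\cC_e})$ is a locally free sheaf on $\cM_\Gamma$, concentrated in degree zero. Taking the $\CC^*$-fixed part of a $\CC^*$-equivariant vector bundle selects the weight-zero subbundle of its character decomposition, hence is again locally free of finite rank.

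The main obstacle I anticipate is making the normalization argument fully rigorous in the orbifold setting: one has to handle carefully the twisted structure at the nodes and confirm that the node terms contribute only to the moving part of $\cE$ (their weight is the weight of $\cN$ at the image fixed point, so the vanishing of weight-zero components carries over). Granted this bookkeeping, the proposition is a direct consequence of the convexity-up-to-two-markings assumption together with the rigidity imposed by $\cX^{\CC^*}=\cP^{\CC^*}$.
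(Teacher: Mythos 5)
Your proposal is correct and follows essentially the same route as the paper: normalization of the curve into contracted (vertex) and non-contracted (edge) pieces, vanishing of the fixed part on contracted components and nodes because $\cN$ carries only nonzero $\CC^*$-weights along the fixed locus, and the convexity-up-to-two-markings hypothesis to kill $H^1$ on the edge components. The only real difference is presentational --- you phrase things relatively over the graph strata and you spell out why $\cX^{\CC^*}=\cP^{\CC^*}$ forces $(\cN|_{p_v})^{\mathrm{fix}}=0$, a fact the paper simply asserts --- so there is nothing to correct.
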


\begin{proof}
Let $f \colon \cC \to \cP$ be a stable map belonging to $\cM(\cP)^{\CC^*}$.  We denote by $\rho \colon \cC \to C$ the coarse map.  It is enough to prove
$$H^1\(C,\rho_*f^* \cN\)^\mathrm{fix}=0.$$

Take the normalization $\nu \colon C^\nu \to C$ of the curves at all their nodes. We have
$$C^\nu = \bigsqcup_{i \in I} C^\mathrm{fix}_i \sqcup \bigsqcup_{j \in J} C^\mathrm{nf}_j,$$
where the superscripts refer, respectively, to fixed and non-fixed components of $C^\nu$ under the map $f$ and the $\CC^*$-action.  In particular, a non-fixed component is unstable and maps to a one-dimensional $\CC^*$-orbit.  By the normalization exact sequence, we obtain an exact sequence
$$\bigoplus_{\mathrm{nodes}} H^0\(\mathrm{node},{f^* \cN}_{|\mathrm{node}}\) \lra H^1(C,f^* \cN) \lra  H^1(C^\nu,\nu^*f^* \cN) \lra 0,$$
with
$$H^1(C^\nu,\nu^*f^* \cN) = \bigoplus_{i \in I} H^1\(C^\mathrm{fix}_i,\nu^*f^* \cN\) \oplus \bigoplus_{j \in J} H^1\(C^\mathrm{nf}_j,\nu^*f^* \cN\).$$
Since the normal bundle has a non-trivial $\CC^*$-action once restricted to the fixed locus of $\cX$ (or equivalently of $\cP$),  we then have
$$H^0\(\mathrm{node},{f^* \cN}_{|\mathrm{node}}\)^\mathrm{fix}=0 \quad \mathrm{and} \quad H^1\(C^\mathrm{fix}_i,\nu^* f^* \cN\)^\mathrm{fix}=0.$$
Therefore, it remains to see the vanishing of $H^1$ for non-fixed unstable curves $C_j^\mathrm{nf}$, $j \in J$.
The curve $C_j^\mathrm{nf}$ is isomorphic to $\PP^1$ with either one or two markings; hence $H^1(C^\mathrm{nf}_j,\nu^*f^* \cN)=0$ by our assumption of convexity up to two markings.
\end{proof}

Denote by $\left[\cM(\cP)^{\CC^*}\right]^\vir$ the virtual fundamental cycle obtained by the $\CC^*$-fixed part of the perfect obstruction theory $R\pi_*f^*T_{\cP}$.

\begin{pro}
In the Chow ring of $\cM(\cP)^{\CC^*}$, we have
$$j_*\left[\cM(\cX)^{\CC^*}\right]^\vir = e_{\CC^*}\(\cE_\mathrm{fix}\) \cdot \left[\cM(\cP)^{\CC^*}\right]^\vir.$$
Furthermore, in the localized equivariant Chow ring, we have
$$e_{\CC^*}(N^\vir_\iota)^{-1} = j^* \(\cfrac{e_{\CC^*}(\cE_\mathrm{mov})}{e_{\CC^*}\(N^\vir_{\widetilde{\iota}}\)}\).$$
\end{pro}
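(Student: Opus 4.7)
The plan is to read off both identities from a single distinguished triangle on the common fixed locus, namely the decomposition of the pulled-back triangle \eqref{exactseqobstr} into its $\CC^*$-fixed and $\CC^*$-moving parts on $\cM(\cX)^{\CC^*}$.

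First I would pull back \eqref{exactseqobstr} along $\iota$, making crucial use of the commutativity $\widetilde{j}\circ\iota=\widetilde{\iota}\circ j$ from the fibered square to identify the pulled-back middle term with $j^{*}\widetilde{\iota}^{*}R\pi_{*}f^{*}T_{\cP}$. Because the triangle is $\CC^*$-equivariant, taking fixed and moving parts---an exact operation on $\CC^*$-modules---produces two distinguished triangles on $\cM(\cX)^{\CC^*}$:
\begin{equation*}
T^\vir_{\cM(\cX)^{\CC^*}} \to j^{*} T^\vir_{\cM(\cP)^{\CC^*}} \to j^{*}\cE_\mathrm{fix},
\end{equation*}
\begin{equation*}
N^\vir_\iota \to j^{*} N^\vir_{\widetilde{\iota}} \to j^{*}\cE_\mathrm{mov}.
\end{equation*}
By the Graber--Pandharipande construction, the left-hand terms are precisely the intrinsic perfect obstruction theories on the fixed loci (first triangle) and the intrinsic virtual normal bundles of $\iota$ and $\widetilde{\iota}$ (second triangle), while the middle terms are their pullbacks under $j$.

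The second identity follows immediately: multiplicativity of the equivariant Euler class along the moving distinguished triangle gives, in the localized equivariant Chow ring,
$$e_{\CC^*}\!\left(j^{*}N^\vir_{\widetilde{\iota}}\right) = e_{\CC^*}\!\left(N^\vir_\iota\right)\cdot e_{\CC^*}\!\left(j^{*}\cE_\mathrm{mov}\right),$$
and rearranging yields the stated formula.

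For the first identity, I would interpret the fixed triangle as exhibiting the closed immersion $j$ together with a compatible triple of perfect obstruction theories whose relative (cone) term is the honest vector bundle $j^{*}\cE_\mathrm{fix}$ (a vector bundle thanks to the convexity-up-to-two-markings hypothesis established in the previous proposition). The convex Quantum Lefschetz formula of Kim--Kresch--Pantev, equivalently Coates--Givental, then yields
$$j_{*}\bigl[\cM(\cX)^{\CC^*}\bigr]^\vir = e_{\CC^*}(\cE_\mathrm{fix})\cap\bigl[\cM(\cP)^{\CC^*}\bigr]^\vir.$$
The main obstacle is verifying that the fixed triangle truly constitutes a compatible triple of obstruction theories in the derived-category sense required by Kim--Kresch--Pantev---that is, that it lifts coherently to a morphism of exact triangles involving the cotangent complexes of the fixed moduli spaces. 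This compatibility follows formally from the functoriality of $E\mapsto R\pi_{*}f^{*}E$ applied to the tangent short exact sequence $0\to T_{\cX}\to i^{*}T_{\cP}\to i^{*}\cN\to 0$, exactly as in the classical convex quantum Lefschetz setting; once this is in hand, the virtual excess intersection formula for a regular embedding cut out by a section of a vector bundle finishes the proof.
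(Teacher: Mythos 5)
Your proposal is correct and follows essentially the same route as the paper: decompose the $\CC^*$-equivariant triangle \eqref{exactseqobstr} restricted to the fixed loci into fixed and moving parts, deduce the second identity from multiplicativity of the Euler class on the moving part, and deduce the first from functoriality of virtual classes for the compatible triple with cone term the vector bundle $\cE_\mathrm{fix}$. The only cosmetic difference is that the paper makes the zero-locus structure explicit---realizing $\cM(\cX)^{\CC^*}$ as the vanishing locus of the induced section of $\cE_\mathrm{fix}$ via the Chang--Li direct image cone---where you cite the standard convex quantum Lefschetz functoriality, which is the same argument.
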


\begin{proof}
The results follow from the standard proof using convexity;  we recall  the main arguments.

The variety $\cX$ is the zero locus of a section of the vector bundle $\cN$ over the ambient space $\cP$.  This section induces a map $s$ from the moduli space of stable maps to $\cP$ to the direct image cone $\pi_*f^*\cN$; see \cite[Definition $2.1$]{Li}.  Since the moduli space $\cM(\cP)^{\CC^*}$ is fixed by the action of $\CC^*$,  it then maps to the fixed part of the direct image cone, that is, the vector bundle $\cE_\mathrm{fix}$.  Hence we have the fibered diagram
\begin{center}
	\begin{tikzpicture}[scale=0.75]	
	\node (A') at (0.5+6+0,1.5) {$\cM(\cX)^{\CC^*}$};
	\node (B') at (-0.5+1+6+4.5,1.5) {$\cM(\cP)^{\CC^*}$};
	\node (C') at (-0.5+1+6+4.5,0) {$\cE_\mathrm{fix}$,};
	\node (D') at (0.5+6+0,0) {$\cM(\cP)^{\CC^*}$};
	\draw[->] (D') -- (C');
	\draw[->] (A') -- (B');
	\draw[->] (A') -- (D');
	\draw[->] (B') -- (C');
	\draw[-] (1.5+6+1.25-0.1,0.75-0.1) -- (1.5+6+1.25-0.1,0.75+0.1) -- (1.5+6+1.25+0.1,0.75+0.1) -- (1.5+6+1.25+0.1,0.75-0.1) -- (1.5+6+1.25-0.1,0.75-0.1);
	\node[above] at (6.25+1.25+1.3,1.5){$j$};
	\node[right] at (6.5+2.5+2,0.75){$s$};
	\node[below] at (6.25+1.25+1.3,0){$0$};
	\end{tikzpicture}
\end{center}
where the bottom map is the embedding as the zero section.  The fixed part of the distinguished triangle \eqref{exactseqobstr} gives a compatibility datum of perfect obstruction theories for the fixed moduli spaces.  The functoriality of the virtual fundamental cycle gives
$$0^{!} \left[\cM(\cP)^{\CC^*}\right]^\vir = \left[\cM(\cX)^{\CC^*}\right]^\vir,$$
which is the desired result once we push forward via the map $j$ on both sides.  The second part of the statement follows from the moving part of the distinguished triangle \eqref{exactseqobstr}.
\end{proof}

It follows, by the virtual localization formula, that the $\CC^*$-equivariant virtual cycle satisfies
\begin{align}\label{equ}
\widetilde{j}_*\left[\cM(\cX)\right]^{\vir,\CC^*} & =  \widetilde{j}_*\iota_* \left( \cfrac{\left[\cM(\cX)^{\CC^*}\right]^\vir}{e_{\CC^*}\(N^\vir_\iota\)} \right) \nonumber \\
& =  \widetilde{\iota}_*j_* \left(\left[\cM(\cX)^{\CC^*}\right]^\vir \cdot j^* \(\cfrac{e_{\CC^*}(\cE_\mathrm{mov})}{e_{\CC^*}\(N^\vir_{\widetilde{\iota}}\)}\) \right) \nonumber \\
& =  \widetilde{\iota}_* \left(e_{\CC^*}\(\cE_\mathrm{fix}\) \cdot \left[\cM(\cP)^{\CC^*}\right]^\vir \cdot  \cfrac{e_{\CC^*}(\cE_\mathrm{mov})}{e_{\CC^*}\(N^\vir_{\widetilde{\iota}}\)} \right) \nonumber \\
& =  \widetilde{\iota}_* \left(e_{\CC^*}\(\cE\) \cdot \cfrac{\left[\cM(\cP)^{\CC^*}\right]^\vir}{e_{\CC^*}\(N^\vir_{\widetilde{\iota}}\)} \right),
\end{align}
where the equalities happen in the $\CC^*$-localized equivariant Chow ring $A_*^{\CC^*}(\cM(\cP))_\loc$.

\begin{rem}\label{defafterloc}
If it were defined, the right-hand side would equal
$$e_{\CC^*}\(R\pi_*f^*\cN\) \cdot \left[\cM(\cP)\right]^{\vir,\CC^*},$$
using the virtual localization formula, but it is not clear that the $\CC^*$-equivariant Euler class of $R\pi_*f^*\cN$ is defined in $A_*^{\CC^*}(\cM(\cP))_\loc$.  However, we say that $e_{\CC^*}\(R\pi_*f^*\cN\)$ is defined after localization\footnote{We can find this definition in \cite{Lho} for the formal quintic.} 
to mean that its pull-back to the fixed locus is defined.
\end{rem}

Now, we aim to extend the right-hand side of the equality to the action of a torus $T$.
We first consider the following general situation.

\begin{cons}\label{partdef}
Let us consider a DM stack $\cY$ with a $T$-action, and thus a $\CC^*$-action via an embedding $\CC^* \hookrightarrow T$.  We then have a pull-back map
$$\xi^* \colon A_*^{T}(\cY) \lra A_*^{\CC^*}(\cY)$$
coming from the map of algebraic stacks
\begin{equation*}
[\cY/\CC^*] \lra [\cY/T]
\end{equation*}
induced by the embedding $\CC^* \hookrightarrow T$.
More concretely, we express the $T$-parameters $t_1, \dotsc, t_N$ in terms of the $\CC^*$-parameter $t$ using the embedding $\CC^* \hookrightarrow T$.

The localized ring in which the (virtual) localization formula happens is defined as follows:
\begin{align*}
A_*^{T}(\cY)_\loc & :=  A_*^{T}(\cY) \otimes_{\QQ[t_1,\dotsc,t_N]} \QQ(t_1,\dotsc,t_N), \\
A_*^{\CC^*}(\cY)_\loc & :=  A_*^{\CC^*}(\cY) \otimes_{\QQ[t]} \QQ(t),
\end{align*}
where the rings of polynomials are indeed $A^T_*(\mathrm{pt})$ and $A^{\CC^*}_*(\mathrm{pt})$, and where we use the pull-back along $\cY \to \mathrm{pt}$.  Note that the inclusion of polynomials into their fraction field is injective, so  $A_*^{T}(\cY) \subset A_*^{T}(\cY)_\loc$ is a subring.

The pull-back map $\xi^*$ can almost be extended to the localized rings. The only issue is that the evaluation map
$$\ev \colon \QQ(t_1,\dotsc,t_N) \longdashrightarrow \QQ(t)$$
is only partially defined, as we see for instance on the element
$$\cfrac{1}{t_1^{a_2}-t_2^{a_1}}$$
when $t \mapsto (t_1=t^{a_1}, t_2=t^{a_2})$.  Nevertheless, when defined, this map is a ring map.  Precisely, if $F_1, F_2$ in $\QQ(t_1,\dotsc,t_N)$ satisfy that $\ev(F_1)$ and $\ev(F_2)$ are well defined, then $\ev(F_1 F_2)$ is well defined and
$$\ev(F_1)\ev(F_2)=\ev(F_1F_2).$$
As a consequence, we have a partially defined ring map
$$\xi^* \colon A_*^{T}(\cY)_\loc \longdashrightarrow A_*^{\CC^*}(\cY)_\loc$$
defined by the formula $\alpha \otimes F \mapsto \alpha \otimes \ev(F)$.  Furthermore, it satisfies some functorial properties.  Let $i \colon \cY_1 \to \cY_2$ be a $T$-equivariant map on DM stacks equipped with a $T$-action.  Then the push-forward map
$$i_* \colon A^T_*(\cY_1) \lra A^T_*(\cY_2)$$
extends to a push-forward map
$$i_* \colon A^T_*(\cY_1)_\loc \lra A^T_*(\cY_2)_\loc$$
by the formula $\alpha \otimes F \mapsto i_*(\alpha) \otimes F$.  Therefore, for all elements $x \in A^T_*(\cY_1)_\loc$ such that $\xi^*(x)$ is well defined, the element $\xi^*(i_*(x))$ is well defined and we have
$$\xi^*(i_*(x)) = i_*(\xi^*(x)).$$
We also have the similar property with pull-backs, although we will not use it.
\end{cons}

Let us get back to our situation.  We consider the $T$-fixed locus $\cM(\cP)^T \hookrightarrow \cM(\cP)$ of the moduli space. In particular, we have the inclusion $\widehat{\iota} \colon \cM(\cP)^T \hookrightarrow \cM(\cP)^{\CC^*}$.  We notice that the moduli space $\cM(\cP)^{\CC^*}$ is stable under the $T$-action from $\cM(\cP)$ and that the map $\widehat{\iota}$ is $T$-equivariant.  Moreover, we have a $T$-equivariant virtual cycle
$$\left[\cM(\cP)^{\CC^*}\right]^{\vir,T} \in A_*^T\(\cM(\cP)^{\CC^*}\)$$
and the equality $$\xi^*\(\left[\cM(\cP)^{\CC^*}\right]^{\vir,T}\) = \left[\cM(\cP)^{\CC^*}\right]^{\vir} \in A_*^{\CC^*}\(\cM(\cP)^{\CC^*}\).$$

By the virtual localization formula, we have
$$\left[\cM(\cP)^{\CC^*}\right]^{\vir,T} = \widehat{\iota}_* \(\cfrac{\left[\cM(\cP)^T\right]^\vir}{e_T\(N^\vir_{\widehat{\iota}}\)} \) \in A_*^T\(\cM(\cP)^{\CC^*}\)_\loc.$$
Furthermore, we have the following equality in K-theory on the space $\cM(\cP)^T$: 
\begin{equation}\label{virtnormbund}
N^\vir_{\widetilde{\iota} \circ \widehat{\iota}} = \widehat{\iota}^*N^\vir_{\widetilde{\iota}} + N^\vir_{\widehat{\iota}}.
\end{equation}
Indeed, let $\cF$ be the pull-back of $R\pi_*f^*T\cP$ to $\cM(\cP)^T$. By definition, the virtual normal bundle $N^\vir_{\widetilde{\iota} \circ \widehat{\iota}}$ is the $T$-moving part $\cF^\mathrm{mov}$, which decomposes as $\cF^{\mathrm{mov}} = \cF^{\mathrm{mov}}_\mathrm{fix} + \cF^{\mathrm{mov}}_\mathrm{mov}$, where the subscript denotes the $\CC^*$-fixed/moving part.  By definition, the virtual normal bundle $\widehat{\iota}^*N^\vir_{\widetilde{\iota}}$ is the $\CC^*$-moving part of $\cF$, \textit{i.e.}~$\cF^{\mathrm{mov}}_\mathrm{mov}$, since there is no $\CC^*$-moving $T$-fixed part in $\cF$.  It follows that the virtual normal bundle $N^\vir_{\widehat{\iota}}$ identifies with $\cF^{\mathrm{mov}}_\mathrm{fix}$.

\begin{rem}
The virtual normal bundle $N^\vir_{\widetilde{\iota}}$ is defined on $\cM(\cP)^{\CC^*}$, and we have an equality
$$\xi^*\(e_T\(N^\vir_{\widetilde{\iota}}\)^{-1}\) = e_{\CC^*}\(N^\vir_{\widetilde{\iota}}\)^{-1} \in A_*^{\CC^*}(\cM(\cP)^{\CC^*})_\loc,$$
which is well defined because the right-hand side is well defined.  We  have also seen the $\CC^*$-decomposition $\cE = \cE_\mathrm{fix} + \cE_\mathrm{mov}$ over $\cM(\cP)^{\CC^*}$ with $\cE_\mathrm{fix}$ being a $T$-equivariant vector bundle.  Indeed, the vector bundle $\cN$ over $\cP$ is $T$-equivariant; thus, so are $\cE$ and $\cE_\mathrm{fix}$.  As a consequence, the equality
$$\xi^*\(e_T(\cE)\) = e_{\CC^*}(\cE) \in A_*^{\CC^*}(\cM(\cP)^{\CC^*})_\loc$$
is well defined because the right-hand side is well defined.
\end{rem}

\begin{pro}\label{proploc}
Consider the well-defined class
$$C_T :=\widehat{\iota}^*\(e_{T}\(\cE\)\) \cdot \cfrac{\left[\cM(\cP)^{T}\right]^\vir}{e_{T}\(N^\vir_{\widetilde{\iota} \circ \widehat{\iota}}\)} \in A_*^{T}(\cM(\cP)^T)_\loc.$$
Its push-forward under the inclusion $\widehat{\iota}$ equals
$$\widehat{\iota}_*\(C_T\) = e_T\(\cE\) \cdot \cfrac{\left[\cM(\cP)^{\CC^*}\right]^{\vir,T}}{e_T\(N^\vir_{\widetilde{\iota}}\)} \in A^T_*\(\cM(\cP)^{\CC^*}\)_\loc.$$
In particular, we have
$$\xi^*\(\widehat{\iota}_*\(C_T\)\) = e_{\CC^*}\(\cE\) \cdot \cfrac{\left[\cM(\cP)^{\CC^*}\right]^\vir}{e_{\CC^*}\(N^\vir_{\widetilde{\iota}}\)} \in A^{\CC^*}_*\(\cM(\cP)^{\CC^*}\)_\loc.$$
\end{pro}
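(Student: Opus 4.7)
The plan is to rewrite $C_T$ in a form where the projection formula for $\widehat{\iota}_*$ applies cleanly, apply the $T$-equivariant virtual localization formula to the smooth-obstructed $T$-space $\cM(\cP)^{\CC^*}$, and then transport the equality along $\xi_*$ using the compatibility remarks.

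First I would split the Euler class in the denominator of $C_T$ using \eqref{virtnormbund}. Since the Euler class is multiplicative on short exact sequences (equivalently, additive on K-theory classes) and since $N^\vir_{\widetilde{\iota} \circ \widehat{\iota}} = \widehat{\iota}^* N^\vir_{\widetilde{\iota}} + N^\vir_{\widehat{\iota}}$ in the $T$-equivariant K-theory of $\cM(\cP)^T$, we obtain
$$e_T\(N^\vir_{\widetilde{\iota} \circ \widehat{\iota}}\) = \widehat{\iota}^*\(e_T(N^\vir_{\widetilde{\iota}})\) \cdot e_T(N^\vir_{\widehat{\iota}}) \in A_*^T(\cM(\cP)^T)_\loc.$$
Substituting, one has
$$C_T = \widehat{\iota}^*\(\cfrac{e_T(\cE)}{e_T(N^\vir_{\widetilde{\iota}})}\) \cdot \cfrac{[\cM(\cP)^T]^\vir}{e_T(N^\vir_{\widehat{\iota}})}.$$

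Next I would push forward by $\widehat{\iota}$ and apply the projection formula to pull the pulled-back factor out, getting
$$\widehat{\iota}_*(C_T) = \cfrac{e_T(\cE)}{e_T(N^\vir_{\widetilde{\iota}})} \cdot \widehat{\iota}_*\(\cfrac{[\cM(\cP)^T]^\vir}{e_T(N^\vir_{\widehat{\iota}})}\).$$
The second factor is the exact content of the virtual localization formula recalled in the excerpt, applied to the $T$-equivariant perfect obstruction theory on $\cM(\cP)^{\CC^*}$ whose fixed locus is $\cM(\cP)^T$: this yields $[\cM(\cP)^{\CC^*}]^{\vir,T}$. Combining the two displayed identities gives the first equation of the proposition.

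Finally, for the $\xi_*$ statement, I would apply $\xi_*$ to both sides. Since $\xi_*$ is a ring homomorphism wherever it is defined, and since the remarks preceding the proposition ensure that each factor is well-defined after the $\CC^*$-localization---namely $\xi_*\(e_T(N^\vir_{\widetilde{\iota}})^{-1}\) = e_{\CC^*}(N^\vir_{\widetilde{\iota}})^{-1}$, $\xi_*(e_T(\cE)) = e_{\CC^*}(\cE)$ (using $T$-equivariance of $\cN$ and hence of $\cE$), and $\xi_*\([\cM(\cP)^{\CC^*}]^{\vir,T}\) = [\cM(\cP)^{\CC^*}]^\vir$---the conclusion follows by multiplicativity. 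The only subtle point, and what I expect to be the main obstacle, is precisely this well-definedness check after localization: one must verify that the denominators $e_T(N^\vir_{\widetilde{\iota} \circ \widehat{\iota}})$ and $e_T(N^\vir_{\widehat{\iota}})$ become invertible after the substitution $\xi_*$, but this is guaranteed because the $\CC^*$-moving part of the obstruction theory is, by the first hypothesis that the $\CC^*$-fixed loci of $\cX$ and $\cP$ coincide, already under control on $\cM(\cP)^{\CC^*}$, while the additional $T$-moving directions not seen by $\CC^*$ remain formal parameters on which the proposition's equality is an identity in $A_*^T(\cM(\cP)^{\CC^*})_\loc$ before specialization.
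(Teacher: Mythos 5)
Your proposal is correct and follows essentially the same route as the paper's own proof: split $e_T(N^\vir_{\widetilde{\iota}\circ\widehat{\iota}})$ multiplicatively via the K-theory identity \eqref{virtnormbund}, apply the projection formula along $\widehat{\iota}$, identify the remaining push-forward with $[\cM(\cP)^{\CC^*}]^{\vir,T}$ by virtual localization, and conclude the $\xi_*$ statement from the multiplicativity of $\xi_*$ on classes with well-defined images. The only difference is cosmetic ordering of the projection-formula steps.
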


\begin{proof}
By the virtual localization above and Equation \eqref{virtnormbund}, we have
\begin{align*}
\widehat{\iota}_*\(C_T\) & =  e_{T}\(\cE\) \cdot \widehat{\iota}_* \(\cfrac{\left[\cM(\cP)^{T}\right]^\vir}{e_{T}\(N^\vir_{\widetilde{\iota} \circ \widehat{\iota}}\)} \) \\
& =  e_{T}\(\cE\) \cdot \widehat{\iota}_* \(\cfrac{\left[\cM(\cP)^{T}\right]^\vir}{\widehat{\iota}^*\(e_{T}\(N^\vir_{\widetilde{\iota}}\)\) \cdot e_{T}\(N^\vir_{\widehat{\iota}}\)} \) \\
& =  \cfrac{e_{T}\(\cE\)}{e_{T}\(N^\vir_{\widetilde{\iota}}\)} \cdot \widehat{\iota}_* \(\cfrac{\left[\cM(\cP)^{T}\right]^\vir}{e_{T}\(N^\vir_{\widehat{\iota}}\)} \) \\
& =  \cfrac{e_{T}\(\cE\) \cdot \left[\cM(\cP)^{\CC^*}\right]^{\vir,T}}{e_{T}\(N^\vir_{\widetilde{\iota}}\)}. \\
\end{align*}
The last sentence follows, as we already discussed in Construction~\ref{partdef}, from the property that $\xi^*$ is a ring map when it is defined, and we have seen that $\xi^*$ is well defined on the three factors on the right-hand side.
\end{proof}

\subsection{Equivariant quantum Lefschetz formula}
Summarizing our discussion, we obtain the following.

\begin{thm}[Equivariant quantum Lefschetz formula]\label{quantum Lefschetz}
Let $\cX \hookrightarrow \cP$ be a $\CC^*$-equivariant embedding of smooth DM stacks satisfying the assumptions listed at the beginning of this section.  Then we have
$$\widetilde{j}_*\left[\cM(\cX)\right]^{\vir,\CC^*}=\xi^* \( e_T\(R\pi_*f^*\cN\) \cdot \left[\cM(\cP)\right]^{\vir,T} \) \in A^{\CC^*}_*(\cM(\cP))_\loc,$$
where $\widetilde{j}$ is the embedding of moduli spaces and $\xi^*$ is the specialization of $T$-equivariant parameters into the $\CC^*$-equivariant parameter.
Here, the $T$-equivariant Euler class $e_T (R \pi_*f^*\cN)$ is defined after localization; see Remark~\ref{defafterloc}.
\end{thm}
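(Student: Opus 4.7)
The strategy is to chain together three ingredients already developed in this section: the $\CC^*$-equivariant virtual localization computation carried out immediately before the theorem statement, the interpretation of $e_T(R\pi_*f^*\cN)$ via restriction to the $T$-fixed locus, and Proposition \ref{proploc}. Recall first that the calculation preceding the theorem produced the identity
\begin{equation*}
\widetilde{j}_*\left[\cM(\cX)\right]^{\vir,\CC^*} \;=\; \widetilde{\iota}_*\!\left(e_{\CC^*}(\cE) \cdot \cfrac{\left[\cM(\cP)^{\CC^*}\right]^{\vir}}{e_{\CC^*}(N^{\vir}_{\widetilde{\iota}})}\right) \;\in\; A^{\CC^*}_*(\cM(\cP))_\loc.
\end{equation*}
It is therefore enough to show that the right-hand side of the theorem, once properly defined, lifts this expression to a $T$-equivariant class whose specialization under $\xi_*$ matches.

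The first substantive step is to make precise the meaning of the product $e_T(R\pi_*f^*\cN) \cdot [\cM(\cP)]^{\vir,T}$. Since $e_T(R\pi_*f^*\cN)$ exists only after localization (Remark \ref{defafterloc}), I interpret this product via $T$-equivariant virtual localization on $\cM(\cP)$: I pull the virtual cycle back to the $T$-fixed locus $\cM(\cP)^T$, where $\cE_{\mathrm{fix}}$ is an honest $T$-equivariant vector bundle (and the $T$-moving part contributes a nonzero polynomial in the equivariant parameters), divide by the $T$-equivariant virtual normal bundle $e_T(N^{\vir}_{\widetilde{\iota} \circ \widehat{\iota}})$, and push forward. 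Concretely, this is exactly the class
\begin{equation*}
e_T(R\pi_*f^*\cN) \cdot \left[\cM(\cP)\right]^{\vir,T} \;:=\; \widetilde{\iota}_*\widehat{\iota}_*(C_T), \quad C_T \;=\; \widehat{\iota}^*(e_T(\cE)) \cdot \cfrac{[\cM(\cP)^T]^{\vir}}{e_T(N^{\vir}_{\widetilde{\iota}\circ\widehat{\iota}})},
\end{equation*}
where $C_T$ is precisely the class introduced in Proposition \ref{proploc}.

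Applying $\xi_*$ is then formal. The specialization map $\xi_*$ commutes with the closed embedding $\widetilde{\iota}_*$ whenever the specialized class is well-defined, a compatibility already noted at the end of the previous subsection. Combined with the last assertion of Proposition \ref{proploc},
\begin{equation*}
\xi_*\bigl(\widehat{\iota}_*(C_T)\bigr) \;=\; e_{\CC^*}(\cE) \cdot \cfrac{[\cM(\cP)^{\CC^*}]^{\vir}}{e_{\CC^*}(N^{\vir}_{\widetilde{\iota}})},
\end{equation*}
this yields
\begin{equation*}
\xi_*\bigl(\widetilde{\iota}_*\widehat{\iota}_*(C_T)\bigr) \;=\; \widetilde{\iota}_*\!\left(e_{\CC^*}(\cE) \cdot \cfrac{[\cM(\cP)^{\CC^*}]^{\vir}}{e_{\CC^*}(N^{\vir}_{\widetilde{\iota}})}\right),
\end{equation*}
and the right-hand side coincides with $\widetilde{j}_*[\cM(\cX)]^{\vir,\CC^*}$ by the identity recalled in the first paragraph, concluding the proof.

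The only genuinely delicate point, and the place where the preparatory work has already absorbed most of the difficulty, is the interpretation of the $T$-equivariant Euler class of $R\pi_*f^*\cN$: without a globally defined class one must work entirely on the $T$-fixed locus, and verify that the intermediate class $C_T$ behaves well under the specialization $\xi_*$. Once convexity up to two markings guarantees that the $\CC^*$-fixed part $\cE_{\mathrm{fix}}$ is an actual vector bundle and Proposition \ref{proploc} provides the required specialization formula, the statement reduces to a formal chain of identifications using functoriality of pushforward and virtual localization.
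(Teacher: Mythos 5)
Your proposal is correct and follows essentially the same route as the paper: both arguments start from the $\CC^*$-localization identity $\widetilde{j}_*[\cM(\cX)]^{\vir,\CC^*}=\widetilde{\iota}_*\bigl(e_{\CC^*}(\cE)\cdot[\cM(\cP)^{\CC^*}]^{\vir}/e_{\CC^*}(N^{\vir}_{\widetilde{\iota}})\bigr)$ established just before the theorem, interpret the right-hand side of the statement as $\xi_*\widetilde{\iota}_*\widehat{\iota}_*(C_T)$ via Remark \ref{defafterloc}, and conclude by Proposition \ref{proploc} together with the commutation of $\xi_*$ with $\widetilde{\iota}_*$. The only difference is the direction of the chain of equalities, which is immaterial.
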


\begin{proof}
Using previous equalities, we get
\begin{align*}
\widetilde{j}_*\left[\cM(\cX)\right]^{\vir,\CC^*}
& =  \widetilde{\iota}_* \left(e_{\CC^*}\(\cE\) \cdot \cfrac{\left[\cM(\cP)^{\CC^*}\right]^\vir}{e_{\CC^*}\(N^\vir_{\widetilde{\iota}}\)} \right) \\
& =  \widetilde{\iota}_*\(\xi^*\(\widehat{\iota}_*\(C_T\)\)\) \\
& =  \xi^*\(\widetilde{\iota}_*\(\widehat{\iota}_*\(C_T\)\)\) \\
& =  \xi^* \widetilde{\iota}_* \widehat{\iota}_* \(  \widehat{\iota}^*\(e_{T}\(\cE\)\) \cdot \cfrac{\left[\cM(\cP)^{T}\right]^\vir}{e_{T}\(N^\vir_{\widetilde{\iota} \circ \widehat{\iota}}\)} \),
\end{align*}
where we first use Equation \eqref{equ}, then Proposition~\ref{proploc}, then the last property in Construction~\ref{partdef}, and finally the definition of $C_T$.  Following Remark~\ref{defafterloc}, the meaning of `defined after localization' is precisely
\begin{align*}\pushQED{\qed}
\xi^* \( e_T\(R\pi_*f^*\cN\) \cdot \left[\cM(\cP)\right]^{\vir,T} \) & =  \xi^* \( e_{T}\(R\pi_*f^*\cN\) \cdot \widetilde{\iota}_* \widehat{\iota}_* \(\cfrac{\left[\cM(\cP)^{T}\right]^\vir}{e_{T}\(N^\vir_{\widetilde{\iota} \circ \widehat{\iota}}\)}  \) \)\\
& =  \xi^* \widetilde{\iota}_* \widehat{\iota}_* \(  \widehat{\iota}^*\(e_{T}\(\cE\)\) \cdot \cfrac{\left[\cM(\cP)^{T}\right]^\vir}{e_{T}\(N^\vir_{\widetilde{\iota} \circ \widehat{\iota}}\)} \).
\qedhere \popQED
\end{align*}
\renewcommand{\qed}{}    
\end{proof}

Finally, we summarize Sections~\ref{sec1} and~\ref{QLsection} in the following theorem, whose set-up is the following: 

Let $\cX \hookrightarrow \cP$ be a $\CC^*$-equivariant embedding of regular $\aA^1$-families.  We assume that
\begin{itemize}
	\item the fixed loci of $\cX$ and of $\cP$ are equal; 
	\item the ambient space $\cP$ carries a $T:=\(\CC^*\)^N$-action; \textit{e.g.}~it is a toric DM stack, extending the $\CC^*$-action via an embedding $\CC^* \hookrightarrow T$; 
	\item the normal bundle $\cN$ of $\cX \hookrightarrow \cP$ is a pull-back from a $T$-equivariant vector bundle over $\cP$, and is convex up to two markings; \textit{e.g.}~it satisfies some positivity condition.
\end{itemize}

Let $X$ be a generic smooth fiber of $\cX$.  Fix a genus $g$, a number of markings $n$ such that $2g-2+n>0$, isotropies $\underline{\rho} = (\rho_1, \dotsc, \rho_n)$ in $X$, a curve class $\beta \in H_2(\cP)$, and ambient insertions $\alpha_1, \dotsc, \alpha_n \in A^*(I\cP)$ with $\alpha_i \in A^*({\cP}_{\rho_i})$ admitting $T$-equivariant liftings.  We set $\alpha := \prod_{i=1}^n \ev_i^*(\alpha_i)$ to be the product of insertions.

\begin{thm}\label{big theo}
Under the set-up and assumptions listed just above, we have the following equality in the Chow ring of\, $\overline{\cM}_{g,n}$: 
$$\left[\cM_{g,\underline{\rho}}(\cP,\beta)\right]^{\vir,T} \cdot e_T (R \pi_*f^*\cN) \cdot \alpha \xrightarrow[t \to 0]{} e\(\mathbb{E}^\vee\) \cdot \left[\cM_{g,\underline{\rho}}(X,\beta)\right]^\vir \cdot \alpha,$$
where on the right-hand side we take the sum over curve classes $\beta_1 \in H_2(X)$ such that $\beta_1 = \beta \in H_2(\cP)$.
Precisely, the class $e_T (R \pi_*f^*\cN)$ is only defined after localization, so we first apply the virtual localization formula to the left-hand side, then we compute it in $A^T_*(\overline{\cM}_{g,n} \times \aA^1)$ as a formal series in the $T$-equivariant parameters and their inverses, then we specialize them to a single variable $t$ using $\CC^* \hookrightarrow T$ and obtain a well-defined polynomial in $t$, and finally we take the constant coefficient and pull it back from $A_*(\overline{\cM}_{g,n} \times \aA^1)$ to $A_*(\overline{\cM}_{g,n})$.

Furthermore, if the regular family $\cP$ is trivial, \textit{i.e.}~$\cP = P \times \aA^1$ for some $T$-equivariant smooth DM stack $P$, then the formula simplifies as
$$e_T\(\mathbb{E}^\vee\) \cdot \left[\cM_{g,\underline{\rho}}(P,\beta)\right]^{\vir,T} \cdot e_T (R \pi_*f^*\cN) \cdot \alpha \xrightarrow[t \to 0]{} e\(\mathbb{E}^\vee\) \cdot \left[\cM_{g,\underline{\rho}}(X,\beta)\right]^\vir \cdot \alpha,$$
with the same meaning as above, except that we work directly in $A^T_*(\overline{\cM}_{g,n})$ instead of $A^T_*(\overline{\cM}_{g,n} \times \aA^1)$.
\end{thm}

\begin{proof}
By Theorem~\ref{quantum Lefschetz}, we obtain
$$\widetilde{j}_*\left[\cM(\cX)\right]^{\vir,\CC^*}=\xi^* \( e_T\(R\pi_*f^*\cN\) \cdot \left[\cM(\cP)\right]^{\vir,T} \) \in A^{\CC^*}_*(\cM(\cP))_\loc,$$
where $\widetilde{j}$ is the embedding of moduli spaces and $\xi^*$ is the specialization of $T$-equivariant parameters into the $\CC^*$-equivariant parameter, corresponding to the specialization induced by $\CC^* \hookrightarrow T$.  We recall that the $T$-equivariant Euler class $e_T (R \pi_*f^*\cN)$ is only defined after localization; see Remark~\ref{defafterloc}.

By Proposition~\ref{skid 2} and Lemma~\ref{Hodge}, we get
$$e\(\mathbb{E}^\vee\) \cdot q_{1*} \( \left[\cM(X)\right]^{\vir} \cdot \alpha \) = 1^*q_* \( \left[\cM(\cX)\right]^{\vir} \cdot \alpha \),$$
where $q \colon \cM(\cX) \to \aA^1 \times \overline{\cM}_{g,n}$, $q_1 \colon \cM(\cX_{s=1}) \to 1 \times \overline{\cM}_{g,n}$, and $1 \colon 1 \times \overline{\cM}_{g,n} \to \aA^1 \times \overline{\cM}_{g,n}$ form a fibered diagram; see the beginning of Section~\ref{secpot}.
Using the $T$-equivariant map $0 \colon 0 \times \overline{\cM}_{g,n} \to \aA^1 \times \overline{\cM}_{g,n}$,  in $A_*(\overline{\cM}_{g,n})$ we have the equality
$$1^*q_* \( \left[\cM(\cX)\right]^{\vir} \cdot \alpha \) = 0^*q_* \( \left[\cM(\cX)\right]^{\vir} \cdot \alpha \),$$
and the right-hand side equals the non-equivariant limit of
$0^*q_* \( \left[\cM(\cX)\right]^{\vir,\CC^*} \cdot \alpha \)$.
We use the notation $\widetilde{q} \colon \cM(\cP) \to \aA^1 \times \overline{\cM}_{g,n}$; we obtain $e(\mathbb{E}^\vee) \cdot q_{1*} \( \left[\cM(X)\right]^{\vir} \cdot \alpha \)$ as the non-equivariant limit of
\begin{equation}\label{limit}
0^*\widetilde{q}_*\(\xi^* \( e_T\(R\pi_*f^*\cN\) \cdot \left[\cM(\cP)\right]^{\vir,T} \) \cdot \alpha \),
\end{equation}
where we recall that $\alpha$ consists of ambient insertions, so  it is a pull-back from $\cM(\cP)$.
Since there exists a $T$-equivariant lift of $\alpha$, it can be written as $\xi^*(\alpha)$, and by ring properties of the push-forward $\xi^*$ (see the end of the proof of Proposition~\ref{proploc}), and by the commutativity of push-forwards, Equation \eqref{limit} equals
\begin{equation}\label{limit2}
0^*\xi^*\( \widetilde{q}_*\( e_T\(R\pi_*f^*\cN\) \cdot \left[\cM(\cP)\right]^{\vir,T} \cdot \alpha \)  \),
\end{equation}
yielding the first part of the statement.

For the second part, we assume that $\cP = P \times \aA^1$, so that the normal bundle of $P \subset \cP$ is the trivial line bundle $\cO$ with a non-trivial $T$-action.  Over the fixed moduli space $\cM(\cP)^T$, the term $R\pi_*\cO =[\cO \to \mathbb{E}^\vee]$ then has no fixed part, and we get
$$\left[\cM(\cP)\right]^{\vir,T} = e_T\(R\pi_*\cO\)^{-1} \cdot \left[\cM(P)\right]^{\vir,T}$$
after localization.  Recall that Equation \eqref{limit2} is meant to be after localization as well, so  the map $\widetilde{q}$ is rather a map from the $T$-fixed moduli space $\cM(\cP)^T$, and thus factors through the map $0$, as the fixed moduli space lies above the central fiber. Thus we write $\widetilde{q} = 0 \circ \overline{q}$ with $\overline{q} \colon \cM(P) \to \overline{\cM}_{g,n}$.  Hence, Equation~\eqref{limit2} becomes
\begin{align*}
&  0^*\xi^*\( 0_* \overline{q}_*\( e_T\(R\pi_*f^*\cN\) \cdot e_T\(R\pi_*\cO\)^{-1} \cdot \left[\cM(P)\right]^{\vir,T} \cdot \alpha \)  \) \\
& =\enspace  0^*0_* \xi^*\( \overline{q}_*\( e_T\(R\pi_*f^*\cN\) \cdot e_T\(R\pi_*\cO\)^{-1} \cdot \left[\cM(P)\right]^{\vir,T} \cdot \alpha \)  \) \\
& = \enspace e_{\CC^*}(\cN_0) \cdot \xi^*\( \overline{q}_*\( e_T\(R\pi_*f^*\cN\) \cdot e_T\(R\pi_*\cO\)^{-1} \cdot \left[\cM(P)\right]^{\vir,T} \cdot \alpha \)  \) \\
&= \enspace \xi^*\( e_T(\cN_0) \cdot \overline{q}_*\( e_T\(R\pi_*f^*\cN\) \cdot e_T\(R\pi_*\cO\)^{-1} \cdot \left[\cM(P)\right]^{\vir,T} \cdot \alpha \)  \) \\
& =\enspace  \xi^*\( \cfrac{e_T(\cN_0)}{e_T\(\cO\)} \cdot \overline{q}_*\( e_T\(R\pi_*f^*\cN\) \cdot e_T\(\mathbb{E}^\vee\) \cdot \left[\cM(P)\right]^{\vir,T} \cdot \alpha \)  \), \\
\end{align*}
where $\cN_0$ is the normal bundle of $0 \colon 0 \times \overline{\cM}_{g,n} \to \aA^1 \times \overline{\cM}_{g,n}$ and therefore equals the trivial bundle $\cO$ with the same non-trivial $T$-action as before.  As a consequence, we get the desired left-hand side
\begin{equation*}\pushQED{\qed}
\xi^*\(\overline{q}_*\( e_T\(R\pi_*f^*\cN\) \cdot e_T\(\mathbb{E}^\vee\) \cdot \left[\cM(P)\right]^{\vir,T} \cdot \alpha \)  \).
\qedhere \popQED
\end{equation*}
\renewcommand{\qed}{}     
\end{proof}

\section{Smooth hypersurfaces in weighted projective spaces}

\subsection{Hodge--Gromov--Witten theory for chain polynomials}
Let $w_1, \dotsc, w_N$ be positive integers, and denote by $\PP(\underline{w})=\PP(w_1,\dotsc,w_N)$ the weighted projective space given by these weights.  In this subsection, we assume the chain-type arithmetic condition: there exist positive integers $a_1, \dotsc, a_N$ and $d$ such that
\begin{equation}\label{arithcondchain}
a_j w_j + w_{j+1} = d \quad \textrm{for $j<N$ and } a_N w_N = d.
\end{equation}
In particular, it gives the existence of a smooth (orbifold) hypersurface $X$ of degree $d$ in $\PP(\underline{w})$.  Precisely, one such example is the vanishing locus of the chain polynomial
$$x_1^{a_1}x_2+\dotsb+x_{N-1}^{a_{N-1}}x_N+x_N^{a_N}.$$
Moreover, since Gromov--Witten theory is invariant under smooth deformations, we can refer to this example for our computations.

The weighted projective space $\PP(\underline{w})$ carries the action of a torus $T=\(\CC^*\)^N$.  We denote the equivariant parameters by $\underline{t}=(t_1,\dotsc,t_N)$.  For any integer $d \in \ZZ$ and any character $\chi \in \Hom(T,\CC)$, there is a $T$-equivariant line bundle $\cO_\chi(d)$.

\begin{rem}\label{weightO(d)}
In Theorem~\ref{HGWchain}, we take the trivial character $\chi$ on $\cO(1)$ and then take its $\supth{d}$ power. This means that $\cO(d)$ has weight $-\tfrac{d t_j}{w_j}$ in the affine chart $x_j=1$.  We refer to Remark~\ref{afterloc} below for another description of the action used on the line bundle $\cO(d)$.
\end{rem}

Denote the $T$-equivariant virtual fundamental cycle by
\begin{equation*}
[\cM(\PP(\underline{w}))]^{\vir,T} \in A_*^T(\cM(\PP(\underline{w}))),
\end{equation*}
so that its non-equivariant limit $\underline{t} \to 0$ gives back the virtual fundamental cycle.  Moreover, the derived object $R\pi_*f^*\cO_\chi(d)$, where $\pi$ is the projection map from the universal curve and $f$ is the universal stable map, is also $T$-equivariant.  Unfortunately, the expression
$$e_T(R\pi_*f^*\cO_\chi(d)) \cdot [\cM(\PP(\underline{w}))]^{\vir,T} \in A_*^T(\cM(\PP(\underline{w})))_\loc,$$
which is defined after localization, does not admit a non-equivariant limit $\underline{t} \to 0$ unless the convexity condition holds and thus $R\pi_*f^*\cO(d) = \pi_*f^*\cO(d)$ is a vector bundle.

\begin{rem}
Convexity holds in genus zero under the Gorenstein condition: $w_j | d$ for all $j$.  In that case, the non-equivariant limit $\underline{t} \to 0$ gives back the virtual cycle $[\cM(X)]^\vir$ of the moduli space of stable maps to a smooth degree $d$ hypersurface $X \subset \PP(\underline{w})$.
\end{rem}

In Theorem~\ref{HGWchain}, we overcome the difficulty of non-convexity with the help of the Hodge bundle $\mathbb{E}$.  First, we pull it back to the moduli space of stable maps to $\PP(\underline{w})$, and then we endow it with the following $T$-action: one rescales fibers of $\mathbb{E}$ by $t_N^{a_N}$.

Finally, we use insertions of ambient cohomology classes of $X$, \textit{i.e.}~which are pulled back from the ambient space $\PP(\underline{w})$.  These classes are naturally expressed in terms of hyperplane classes, so they admit $T$-invariant representatives.

As an application of our regular specialization theorem, we prove the following.

\begin{thm}[Hodge--Gromov--Witten theory of chain hypersurfaces]\label{HGWchain}
We assume condition \eqref{arithcondchain}, and we fix $g, n \in \NN$ such that $2g-2+n>0$, $\beta \in \NN$, and isotropies $\underline{\rho} = (\rho_1, \dotsc, \rho_n)$ in $\PP(\underline{w})$.  Let $X \subset \PP(\underline{w})$ be a smooth hypersurface of degree $d$ and $\alpha_1,\dotsc,\alpha_n$ be ambient cohomology classes on $X$, \textit{i.e.}~pulled back from $\PP(\underline{w})$.  We set $\alpha := \prod_{i=1}^n \ev_i^*(\alpha_i)$ to be the product of insertions.  Then we have the following equality in the Chow ring of\, $\overline{\cM}_{g,n}$: 
$$e_T\(\mathbb{E}^\vee\) \cdot  \left[\cM_{g,\underline{\rho}}(\PP(\underline{w}),\beta)\right]^{\vir,T} \cdot e_T (R \pi_*f^*\cO(d)) \cdot \alpha \xrightarrow[t \to 0]{} e\(\mathbb{E}^\vee\) \cdot \left[\cM_{g,\underline{\rho}}(X,\beta)\right]^\vir \cdot \alpha.$$
Precisely, the class $e_T (R \pi_*f^*\cO(d))$ is only defined after localization, so we first apply the virtual localization formula to the left-hand side, then we compute it in $A^T_*(\overline{\cM}_{g,n})$ as a formal series in the $T$-equivariant parameters and their inverses, then we specialize them to $$t_{j+1} = (-a_1) \dotsm (-a_j) t$$
for all $1 \leq j \leq N$ and obtain a well-defined polynomial in $t$, and finally  we take the constant coefficient.
\end{thm}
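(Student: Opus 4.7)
The plan is to deduce the theorem from the simplified ``trivial ambient'' form of Theorem \ref{big theo}, applied to a particular regular $\aA^1$-family of hypersurfaces inside $\PP(\underline{w}) \times \aA^1$.

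First I introduce the $\aA^1$-family
\begin{equation*}
\cX := \{(x_1, \dotsc, x_N, s) \in \PP(\underline{w}) \times \aA^1 : x_1^{a_1} x_2 + \dotsb + x_{N-1}^{a_{N-1}} x_N + s \cdot x_N^{a_N} = 0\}
\end{equation*}
with ambient $\cP := \PP(\underline{w}) \times \aA^1$. The fiber at $s = 1$ is (up to deformation equivalence, which preserves Gromov--Witten theory) the smooth chain hypersurface $X$, while the fiber at $s = 0$ is the singular hypersurface cut out by the truncated chain polynomial. A direct Jacobian computation shows that the total space $\cX$ is smooth: at a point with $x_N \neq 0$ one has $\partial W/\partial s = x_N^{a_N} \neq 0$; at points with $x_N = 0$, smoothness follows from a cascading analysis of the partial derivatives $\partial W/\partial x_j$ using that at least one coordinate must be non-zero in $\PP(\underline{w})$. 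Since the defining equation is linear in $s$ with coefficients having trivial common factor, $\cX$ is irreducible, and dominance over $\aA^1$ then gives flatness; hence $\cX$ is a regular $\aA^1$-family.

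Next I equip $\cP$ with the torus action $T = (\CC^*)^N$ rescaling each coordinate $x_j$ by the character $t_j$, and define the embedding $\CC^* \hookrightarrow T$ by the specialization $t_{j+1} = (-a_1)(-a_2) \cdots (-a_j) \, t$, with $t_1 = t$. Using the chain arithmetic \eqref{arithcondchain}, a direct calculation gives that each of the first $N-1$ monomials $x_j^{a_j} x_{j+1}$ of the chain polynomial has $\CC^*$-weight $a_j t_j + t_{j+1} = 0$, while the last monomial $x_N^{a_N}$ has weight $(-1)^{N-1} a_1 \cdots a_N \, t$. Extending the $\CC^*$-action to $\aA^1$ so that $s$ has weight $(-1)^N a_1 \cdots a_N \, t$ makes the entire defining equation of $\cX$ invariant, turning $\cX \hookrightarrow \cP$ into a $\CC^*$-equivariant embedding of regular $\aA^1$-families.

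It then remains to verify the remaining hypotheses of Theorem \ref{big theo}. The $\CC^*$-fixed locus of $\cP$ consists of the $N$ toric fixed points of $\PP(\underline{w})$ at $s = 0$, since the specializations $t_j$ of $t$ are pairwise distinct and the weight of $s$ is non-zero; each such toric fixed point has exactly one non-zero coordinate, hence trivially satisfies the truncated chain polynomial and lies in $X_0 \subset \cX$, so the fixed loci coincide. The normal bundle of $\cX \hookrightarrow \cP$ is $\cO_\cP(d)$ equipped with the $T$-equivariant structure described in Remark \ref{weightO(d)}, pulled back from $\cP$. Finally, for convexity up to two markings of $\cN = \cO(d)$, on any smooth genus-zero orbifold curve $\cC$ with at most two markings mapping to $\PP(\underline{w})$, the pullback $f^* \cO(d)$ is an orbifold line bundle of non-negative coarse degree, and a case analysis on the admissible orbifold types at the markings combined with standard orbifold Riemann--Roch yields $H^1(\cC, f^* \cO(d)) = 0$. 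With all hypotheses verified, the trivial-ambient form of Theorem \ref{big theo} applied to this setup yields the displayed identity, with the prescribed specialization $t_{j+1} = (-a_1) \cdots (-a_j) \, t$ corresponding precisely to the embedding $\CC^* \hookrightarrow T$ defined above. The main obstacle in this program is the convexity verification: in the non-Gorenstein range of the chain arithmetic it is genuinely more subtle than in the Gorenstein setting, and one must leverage \eqref{arithcondchain} together with a careful accounting of orbifold ages at the markings; once it is in place, the theorem follows immediately from the general framework established in Sections \ref{sec1} and \ref{QLsection}.
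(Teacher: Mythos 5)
Your construction is exactly the paper's: the same family $\cX=\{x_1^{a_1}x_2+\dotsb+x_{N-1}^{a_{N-1}}x_N+x_N^{a_N}s=0\}\subset\PP(\underline{w})\times\aA^1$, the same $\CC^*$-weights $p_1=1$, $p_{j+1}=(-a_1)\dotsm(-a_j)$, the same identification of the $\CC^*$-fixed loci of $\cX$ and $\cP$ inside the central fiber, the same normal bundle $\cO(d)$ with its $T$-equivariant structure, and the same appeal to the trivial-ambient form of Theorem \ref{big theo}. So the route is not different; the only question is whether every hypothesis of Theorem \ref{big theo} has actually been verified, and there you leave a genuine gap.

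The gap is the convexity check, which you explicitly defer as ``the main obstacle'' and describe as requiring a delicate accounting of orbifold ages tied to the non-Gorenstein chain arithmetic. It does not: the paper's argument is two lines and uses nothing beyond $d\cdot\deg f\ge 0$ and the bound of two markings. On a smooth genus-zero orbifold curve $\cC$ with coarse space $C\simeq\PP^1$ and at most two markings, write $f^*\cO(d)=\cO(m+r_1\sigma_1+r_2\sigma_2)$ with $m\in\ZZ$ and ages $0\le r_1,r_2<1$; then $m+r_1+r_2=d\cdot\deg(f)\ge 0$ forces $m\ge -1$, hence $H^1(\cC,f^*\cO(d))=H^1(C,\cO(m))=0$. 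Note that your intermediate assertion that $f^*\cO(d)$ has ``non-negative coarse degree'' is false as stated --- the coarse degree $m$ can equal $-1$, and the whole point of the two-marking restriction is that it cannot drop below $-1$. A secondary inaccuracy: the specializations $t_j$ need not be pairwise distinct (e.g.\ $a_2=1$ forces $p_1/w_1=p_3/w_3$, so the fixed locus of $\PP(\underline{w})$ can be positive-dimensional); what Theorem \ref{big theo} actually requires is only that the $\CC^*$-fixed locus of $\cP$ be contained in $\cX$, which holds because every fixed point of $\PP(\underline{w})\times\{0\}$ kills all monomials of the truncated chain polynomial. With the convexity argument inserted, your proof coincides with the paper's.
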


\begin{rem}\label{afterloc}
The specialization of $T$-equivariant parameters in terms of a single variable $t$ can be rephrased as an embedding $\CC^* \hookrightarrow T$.  Then by Equation \eqref{arithcondchain}, there is a $\CC^*$-invariant (singular) hypersurface of degree $d$
$$X_0 = \left\lbrace x_1^{a_1}x_2+\dotsb+x_{N-1}^{a_{N-1}}x_N = 0 \right\rbrace \subset \PP(\underline{w}),$$
and the line bundle $\cO(d)$ in Theorem~\ref{HGWchain} is its normal line bundle.  Therefore, it comes with a $\CC^*$-action.  To be more precise, look at the weights on fibers over the fixed locus, which consists of all coordinate points in $\PP(\underline{w})$.  At the point $(0,\dotsc,x_j=1,\dotsc,0) \in \PP(\underline{w})$, the $\CC^*$-action has weight $-\tfrac{d t_j}{w_j}$, as was announced in Remark~\ref{weightO(d)}.
\end{rem}

\begin{rem}
Theorem~\ref{HGWchain} yields an explicit formula for Hodge--Gromov--Witten invariants of $X$ as a sum over dual graphs.  Indeed, such a formula is known for weighted projective spaces; see \textit{e.g.}~\cite{Liu}.
\end{rem}

\begin{proof}
As Gromov--Witten theory is invariant under smooth deformations, we can take the degree $d$ hypersurface $X$ to be the zero locus of the chain polynomial
$$P = x_1^{a_1}x_2+\dotsb+x_{N-1}^{a_{N-1}}x_N+x_N^{a_N}.$$
Define the regular family over $\aA^1$
$$\cX = \left\lbrace x_1^{a_1}x_2+\dotsb+x_{N-1}^{a_{N-1}}x_N+x_N^{a_N}s=0 \right\rbrace \subset \PP(w_1,\dotsc,w_N) \times \aA^1 =: \cP.$$
It is endowed with a $\CC^*$-action with weight $p_j$ on $x_j$ and $p_{N+1}$ on $s$ satisfying $p_1=1$ and
$p_{j+1} = (-a_1) \dotsm (-a_j)$
for $1 \leq j \leq N$.
Moreover, the fiber $X_1$ at $s=1$ equals the smooth hypersurface $X$, and the fiber $X_0$ at $s=0$ has exactly one singular point $\mathrm{Sing}(X_0) = (0,\dotsc,0,1) \in \PP(w_1,\dotsc,w_N)$.
It is then enough to check the assumptions of Theorem~\ref{big theo}.

The $\CC^*$-fixed loci for $\cX$ and for $\cP$ are the same; \textit{i.e.}~they are given by all $N$ coordinate points in the central fiber $s=0$.  The DM stack $\cP$ carries a $T=\(\CC^*\)^N$-action, where the action of $T$ on $\aA^1$ is the multiplication by $t_N^{-a_N}$.  The normal bundle of $\cX \hookrightarrow \cP$ is the pull-back of the $T$-equivariant line bundle $\cO(d)$ on $\PP(\underline{w})$, with the trivial character, as explained in Remarks~\ref{weightO(d)} and~\ref{afterloc}.  It remains to prove convexity up to two markings.

Let $f \colon \cC \to \cP$ be a stable map, where $\cC$ is a non-contracted smooth genus zero orbifold curve with two markings $\sigma_1$ and $\sigma_2$ (the cases with one or zero markings are similar).  Then $f^* \cO(d)$ is a line bundle over $\cC$ and can be written as
$$f^* \cO(d) = \cO(m + r_1 \sigma_1+r_2 \sigma_2),$$
with $0 \leq r_1, r_2 <1$ being the monodromies at the markings and $m \in \ZZ$. Moreover, we have the relation
$$m + r_1+r_2 = d \cdot \deg(f) \geq 0.$$
Hence, we have $m>-2$, so $m \geq -1$.
Therefore, denoting by $C=\PP^1$ the coarse curve of $\cC$, we have $H^1(\cC,f^* \cO(d)) = H^1(C,\cO(m)) = 0$.
\end{proof}

As a special case of Theorem~\ref{HGWchain}, we obtain a full genus zero computation of the Gromov--Witten theory of chain hypersurfaces with ambient insertions, using the simple fact that the Hodge class equals $1$ in genus zero.

\begin{cor}[Genus zero Gromov--Witten theory of chain hypersurfaces]\label{0HGWchain}
In the notation of Theorem~\ref{HGWchain} and under its assumptions, but with $g=0$, we have the following equality in the Chow ring of\, $\overline{\cM}_{0,n}$: 
$$\left[\cM_{0,\underline{\rho}}\(\PP(\underline{w}),\beta\)\right]^{\vir,T} \cdot e_T (R \pi_*f^*\cO(d)) \cdot \alpha \xrightarrow[t \to 0]{} \left[\cM_{0,\underline{\rho}}(X,\beta)\right]^\vir \cdot \alpha.$$
\end{cor}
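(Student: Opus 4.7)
The plan is to deduce this immediately from Theorem \ref{HGWchain} specialized to $g=0$. The only thing to observe is that when $g=0$, the Hodge bundle $\mathbb{E} = \pi_* \omega_\pi$ is the zero vector bundle on every moduli space of genus-zero stable curves, since $H^0(\cC, \omega_\cC) = 0$ for any (prestable) genus-zero orbifold curve $\cC$. Consequently, both the equivariant top Chern class $e_T(\mathbb{E}^\vee)$ appearing on the left-hand side of Theorem \ref{HGWchain} and the non-equivariant top Chern class $e(\mathbb{E}^\vee)$ appearing on the right-hand side reduce to the top Chern class of a rank-zero bundle, which is the unit class $1 \in A^*(\overline{\cM}_{0,n})$.

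Thus I would simply substitute $g=0$ into the equality of Theorem \ref{HGWchain}:
\begin{equation*}
e_T(\mathbb{E}^\vee) \cdot [\cM_{0,\underline{\rho}}(\PP(\underline{w}),\beta)]^{\vir,T} \cdot e_T (R \pi_*f^*\cO(d)) \cdot \alpha \xrightarrow[t \to 0]{} e(\mathbb{E}^\vee) \cdot [\cM_{0,\underline{\rho}}(X,\beta)]^\vir \cdot \alpha,
\end{equation*}
and then remove the (trivial) Hodge-class factors on both sides. The specialization procedure for the equivariant parameters is unchanged: the localization is first applied to $[\cM_{0,\underline{\rho}}(\PP(\underline{w}),\beta)]^{\vir,T} \cdot e_T(R\pi_* f^*\cO(d))$, then the $t_j$ are specialized to $(-a_1)\cdots(-a_{j-1})\,t$, and finally the constant coefficient in $t$ is taken.

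There is essentially no obstacle: the corollary is a direct consequence of $\mathbb{E}=0$ in genus zero. The only point that is worth recording (and which was already flagged in Remark \ref{0skid}) is that in genus zero the regularized obstruction theory $F_{X_0}$ coincides with the Gromov--Witten obstruction theory $E_{X_0}$, so that Definitions \ref{skid pot} and \ref{skid potX_1} do not introduce any extra class on either side of the identity — which is precisely why the genus-zero statement is an honest equality of Gromov--Witten cycles without any Hodge insertion.
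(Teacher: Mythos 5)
Your proposal is correct and matches the paper's own (one-line) justification exactly: the corollary is obtained from Theorem \ref{HGWchain} with $g=0$ by observing that the Hodge bundle has rank $g=0$, so the classes $e_T(\mathbb{E}^\vee)$ and $e(\mathbb{E}^\vee)$ both equal $1$. Your additional remark about $F_{X_0}\simeq E_{X_0}$ in genus zero is consistent with Remark \ref{0skid} and harmless.
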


\subsection{Hodge--Gromov--Witten theory for loop polynomials}
In this subsection, we assume the loop-type arithmetic condition: there exist positive integers $a_1, \dotsc, a_N$ and~$d$ such that
\begin{equation}\label{arithcondloop}
a_j w_j + w_{j+1} = d  \textrm{ for $j<N$}\quad \text{and}\quad a_N w_N + w_1 = d.
\end{equation}
In particular, this gives the existence of a smooth (orbifold) hypersurface $X$ of degree $d$ in $\PP(\underline{w})$.
Precisely, one such example is the vanishing locus of the loop polynomial
$$x_1^{a_1}x_2+\dotsb+x_{N-1}^{a_{N-1}}x_N+x_N^{a_N}x_1.$$ Moreover, since Gromov--Witten theory is invariant under smooth deformations, we can refer to this example for our computations.

\begin{rem}\label{weightO(d)2}
In Theorem~\ref{HGWloop}, we take the trivial character $\chi$ on $\cO(1)$ and then take its $\supth{d}$ power. This means that $\cO(d)$ has weight $-\tfrac{d t_j}{w_j}$ in the affine chart $x_j=1$.  We refer to Remark~\ref{afterloc2} below for another description of the action used on the line bundle $\cO(d)$.
\end{rem}

As in Theorem~\ref{HGWchain}, we overcome the difficulty of non-convexity with the help of the Hodge bundle $\mathbb{E}$, but we change the $T$-action to the following: one rescales fibers of $\mathbb{E}$ by $t_N^{a_N}t_1$.  Furthermore, we change the ambient space by performing a weighted blow-up.

Define a polynomial $Q$ by
$$Q(x_1, \dotsc, x_{N-1},s) = x_1^{a_1}x_2+\dotsb+x_{N-1}^{a_{N-1}}+x_1s.$$ Since $Q$ is a chain polynomial, it is quasi-homogeneous with some positive weights $b_1,\dotsc,b_{N-1},b_s$ and degree $\delta \in \NN^*$.

Let $\widetilde{\cP}$ be the weighted blow-up of $\PP(\underline{w}) \times \aA^1=:\cP$ at the point $((0,\dotsc,0,1),s=0)$ with weights $b_1,\dotsc,b_{N-1}$ on the variables $x_1, \dotsc, x_{N-1}$ in the chart $x_N=1$ and weight $b_s$ on the variable $s \in \aA^1$.  We refer to \cite{AbramWBl} for the construction of the weighted blow-up.

Let $T = \(\CC^*\)^{N+1}$ be the natural torus action on $\cP$.  Since the base locus $((0,\dotsc,0,1),s=0)$ is fixed under the torus $T$,  the space $\widetilde{\cP}$ then also carries a $T$-action; it is even a toric DM stack.  Moreover, the line bundle $\cO(d)$ defined in Remark~\ref{weightO(d)2} pulls back to a $T$-equivariant line bundle on $\widetilde{\cP}$, which we again denote by $\cO(d)$.  Let $E$ be the exceptional divisor of $\widetilde{\cP} \to \cP$. The line bundle $\cO(d-E)$ is also $T$-equivariant.

Furthermore, over $\aA^1-0$, we have $\widetilde{\cP} \simeq \cP$, so  we can embed the hypersurface $X \subset \PP(\underline{w})$ defined by the loop polynomial in the fiber over $s=1$, yielding
$$X \longhookrightarrow \widetilde{\cP}.$$
Let $\alpha \in H^*(X)$ be a cohomology class which is a pull-back from $\PP(\underline{w})$. It can be represented by a $T$\nobreakdash-equivariant cycle in $\PP(\underline{w})$ which does not contain the point $(0,\dotsc,0,1)$.
Therefore, we can view it as a $T$-equivariant cohomology class on $\widetilde{\cP}$, such that the pull-back to $X$ of its non-equivariant limit equals $\alpha$.

A curve class in $\PP(\underline{w})$ is a non-negative multiple of a line, which we can choose to avoid the point $(0,\dotsc,0,1)$ so that it gives a curve class in $\widetilde{\cP}$. We only consider these curve classes in the following statement.  Moreover, we assume condition \eqref{arithcondloop}, and we fix $g, n \in \NN$ such that $2g-2+n>0$, $\beta \in \NN$, and isotropies $\underline{\rho} = (\rho_1, \dotsc, \rho_n)$ in $\PP(\underline{w})$.  Let $X \subset \PP(\underline{w})$ be a smooth hypersurface of degree $d$ and $\alpha_1,\dotsc,\alpha_n$ be ambient cohomology classes on $X$, \textit{i.e.}~pulled back from $\PP(\underline{w})$.  We set $\alpha := \prod_{i=1}^n \ev_i^*(\alpha_i)$ to be the product of insertions.

\begin{thm}[Hodge--Gromov--Witten theory of loop hypersurfaces]\label{HGWloop}
Under the set-up and assumptions listed just above, we have the following equality in the Chow ring of\, $\overline{\cM}_{g,n}$: 
$$\left[\cM_{g,\underline{\rho}}\(\widetilde{\cP},\beta\)\right]^{\vir,T} \cdot e_T (R \pi_*f^*\cO(d-E)) \cdot \alpha \xrightarrow[t \to 0]{} e\(\mathbb{E}^\vee\) \cdot \left[\cM_{g,\underline{\rho}}(X,\beta)\right]^\vir \cdot \alpha.$$
Precisely, the class $e_T (R \pi_*f^*\cO(d-E))$ is only defined after localization, so we first apply the virtual localization formula to the left-hand side, then we compute it in $A^T_*(\overline{\cM}_{g,n} \times \aA^1)$ as a formal series in the $T$-equivariant parameters and their inverses, then we specialize them to
$$t_{N+1} = \((-a_1)\dotsm (-a_N) -1\) t ~, \quad t_{j+1} = (-a_1) \dotsm (-a_j) t$$
for all $1 \leq j \leq N-1$ and obtain a well-defined polynomial in $t$, and finally we take the constant coefficient and pull it back from $A_*(\overline{\cM}_{g,n} \times \aA^1)$ to $A_*(\overline{\cM}_{g,n})$.
\end{thm}

\begin{rem}\label{afterloc2}
The specialization of the $N$ first $T$-equivariant parameters in terms of a single variable $t$ is the same as in Theorem~\ref{HGWchain}, but the last one is different; \textit{i.e.}~the Hodge bundle is rescaled with weight $(-a_1) \dotsm (-a_N)-1$ instead of weight $(-a_1) \dotsm (-a_N)$.  By Equation \eqref{arithcondloop}, there is a $\CC^*$-invariant (singular) hypersurface of degree $d$
$$X_0 = \left\lbrace x_1^{a_1}x_2+\dotsb+x_{N-1}^{a_{N-1}}x_N = 0 \right\rbrace \subset \PP(\underline{w}),$$
and the line bundle $\cO(d)$ in Theorem~\ref{HGWloop} is its normal line bundle, with the same $\CC^*$-action as in Theorem~\ref{HGWchain}.
The line bundle $\cO(d-E)$ is also a normal bundle, as we see in the proof below.
\end{rem}

\begin{rem}
Theorem~\ref{HGWloop} yields an explicit formula for Hodge--Gromov--Witten invariants of $X$ as a sum over dual graphs.  Indeed, such a formula is known for every smooth toric DM stack, see \textit{e.g.}~\cite{Liu}, and~$\widetilde{\cP}$ is such an item.
\end{rem}

\begin{proof}[Proof of Theorem~\ref{HGWloop}]
The proof is similar to that of Theorem~\ref{HGWchain}.  We take the degree $d$ hypersurface $X$ to be the zero locus of the loop polynomial
$$P = x_1^{a_1}x_2+\dotsb+x_{N-1}^{a_{N-1}}x_N+x_N^{a_N}x_1,$$
and we define the family
$$\cX = \left\lbrace x_1^{a_1}x_2+\dotsb+x_{N-1}^{a_{N-1}}x_N+x_N^{a_N}x_1s=0 \right\rbrace \subset \PP(w_1,\dotsc,w_N) \times \aA^1.$$ It is endowed with a $\CC^*$-action with weight $p_j$ on $x_j$ and $p_{N+1}$ on $s$ satisfying $p_1=1$ and $p_{j+1} = (-a_1) \dotsm (-a_j)$ for $1 \leq j \leq N-1$ and $p_{N+1} = (-a_1) \dotsm (-a_j)-1$.  Moreover, the fiber $X_1$ at $s=1$ equals the smooth hypersurface $X$.  However, the DM stack $\cX$ is not smooth, as it is singular at the point $(0,\dotsc,0,1)$ of the central fiber $s=0$.  We thus need to resolve the singularities, and that is why we use the weighted blow-up.

Define the family $\widetilde{\cX}$ over $\aA^1$ as the weighted blow-up of $((0,\dotsc,0,1),s=0)$ with weights $(b_1,\dotsc,b_{N-1})$ on the variables $x_1, \dotsc, x_{N-1}$ in the chart $x_N=1$ and weight $b_s$ on $s$.  We claim that the total space of the family $\widetilde{\cX}$ is regular.

Indeed, in the local chart where $x_N=1$, it is defined by the equation
$$x_1^{a_1}x_2+\dotsb+x_{N-1}^{a_{N-1}}+x_1 s=0.$$
The choice of weights $b_1,\dotsc,b_{N-1}, b_s$ is such that the polynomial
$$x_2^{a_2}x_3+\dotsb+x_{N-1}^{a_{N-1}}+x_1 s$$
is quasi-homogeneous with these weights.
In the blow-up chart associated to the variable $x_k$, we have new variables $\dot{x}_1,\dotsc,\dot{x}_{N-1},u,$ and $\dot{s}$ satisfying $\dot{x}_k=1$ and
$$x_k = u^{b_k},\quad x_j = u^{b_j} \dot{x}_j,\quad s=u^{b_s}\dot{s},$$
and the equation defining $\widetilde{\cX}$ becomes
$$u^{a_1b_1+b_2-\delta}\dot{x}_1^{a_1}\dot{x}_2+\dot{x}_2^{a_2}\dot{x}_3+\dotsb+\dot{x}_{N-1}^{a_{N-1}}+\dot{x}_1 \dot{s}=0,$$
where the power $a_1 b_1+b_2-\delta = (a_1-1)b_1+b_2$ is positive.  In particular, we see that we can change the chart and assume $k \neq N-1$.  Therefore, the partial derivative, along $\dot{x}_{k+1}$ if $k\neq 1$ and along $\dot{s}$ if $k=1$, does not vanish.  Similarly, in the blow-up chart associated to the variable $s$, we take the partial derivative along $\dot{x}_1$.  Hence $\widetilde{\cX}$ is a smooth DM stack.

It remains to check the assumptions of Theorem~\ref{big theo} for the embedding $\widetilde{\cX} \hookrightarrow \widetilde{\cP}$.  The $\CC^*$-fixed loci for~$\cX$ and for $\widetilde{\cP}$ are the same; \textit{i.e.}~they are given by $2N-1$ isolated points at the central fiber.  Indeed, outside the exceptional divisor $E$ in $\widetilde{\cP}$, the fixed points are the $N-1$ coordinate points $(1,0,\dotsc,0), \dotsc,(0,\dotsc,0,1,0)$.  The exceptional divisor $E$ is isomorphic to the weighted projective space $\PP(b_1,\dotsc,b_{N-1},b_s)$ and carries the non-trivial $\CC^*$-action with weight $t_j-t_N$ on the $\supth{j}$ variable and $t_{N+1}$ on the last variable.  We check that
$$b_s (t_i-t_N) \neq b_i t_{N+1} \quad \textrm{and} \quad b_j (t_i-t_N) \neq b_i (t_j-t_N),\quad \forall 1 \leq i<j \leq N-1,$$
which implies that the fixed locus in the exceptional divisor consists of its $N$ coordinate points.
We see that all these $2N-1$ points belong to $\cX$.

The normal bundle of $\cX \hookrightarrow \widetilde{\cP}$ is the pull-back of the $T$-equivariant line bundle $\cO(d-E)$ on $\widetilde{\cP}$; see Remarks~\ref{weightO(d)2} and~\ref{afterloc2}.
It remains to prove the convexity up to two markings, which follows from the same positivity argument as in the proof of Theorem~\ref{HGWchain}.
Indeed, any stable map $\widetilde{f} \colon \cC \to \widetilde{\cP}$ induces a stable map $f \colon \cC \to \PP(\underline{w}) \times \aA^1$, and we have three cases:
\begin{itemize}
\item The image $f(\cC)$ does not contain the base locus $((0,\dotsc,0,1),s=0)$, so  it is isomorphic to the image $L:=\widetilde{f}(\cC)$ and we have ${\cO(d-E)}_L =\cO_L(d)$.
\item The image $f(\cC)$ is a curve containing the base locus $((0,\dotsc,0,1),s=0)$, so  the image $L:=\widetilde{f}(\cC)$ is its strict transform, in which case we have ${\cO(d-E)}_L=\cO_L(d-1)$,
\item The image $f(\cC)$ equals the base locus $((0,\dotsc,0,1),s=0)$, so  the image $L:=\widetilde{f}(\cC)$ is contained in the exceptional divisor $E$, in which case we have ${\cO(d-E)}_L={\cO(-E)}_L=\cO_L(1)$.
\end{itemize}
The degree of the line bundle on the image $\widetilde{f}(\cC)$ is non-negative in all three cases, and we have seen in the proof of Theorem~\ref{HGWchain} that this implies $H^1(\cC,\widetilde{f}^*\cO(d-E))=0$.
\end{proof}

As a special case of Theorem~\ref{HGWloop}, we obtain a full genus zero computation of the Gromov--Witten theory of loop hypersurfaces with ambient insertions.

\begin{cor}[Genus zero Gromov--Witten theory of loop hypersurfaces]\label{0HGWloop}
In the notation of Theorem~\ref{HGWloop} and under its assumptions, but with $g=0$, we have the following equality in the Chow ring of\, $\overline{\cM}_{0,n}$: 
$$\left[\cM_{0,\underline{\rho}}\(\widetilde{\cP},\beta\)\right]^{\vir,T} \cdot e_T (R \pi_*f^*\cO(d-E)) \cdot \alpha \xrightarrow[t \to 0]{} \left[\cM_{0,\underline{\rho}}(X,\beta)\right]^\vir \cdot \alpha.$$
\end{cor}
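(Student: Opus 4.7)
The plan is to deduce this corollary as the immediate $g=0$ specialization of Theorem \ref{HGWloop}. The only additional input needed is that the Hodge bundle $\mathbb{E}$ on $\overline{\cM}_{0,n}$ has rank equal to the genus, namely zero. Hence its top Chern class $e(\mathbb{E}^\vee)$ is the top Chern class of a rank-zero bundle, which equals the unit $1 \in A^*(\overline{\cM}_{0,n})$. Substituting this into the right-hand side of Theorem \ref{HGWloop} removes the Hodge factor and yields precisely the identity claimed in the corollary. The assumption $2g-2+n>0$, which in genus zero forces $n\geq 3$, is part of the notational convention inherited from Theorem \ref{HGWloop}.

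Alternatively, one could give a self-contained genus-zero derivation that bypasses the Hodge machinery entirely. By Remark \ref{0skid}, in genus zero the regularized obstruction theory $F_{X_0}$ is quasi-isomorphic to the Gromov--Witten obstruction theory $E_{X_0}$, so the regularized and Gromov--Witten virtual cycles coincide. One would then apply Corollary \ref{skidg=0} — the genus-zero Regular Specialization Theorem — to the regular $\aA^1$-family $\widetilde{\cX}$ built in the proof of Theorem \ref{HGWloop}, whose generic fiber is the smooth loop hypersurface $X$ and whose central fiber carries a $\CC^*$-action with the specified weights. Composing this with the equivariant quantum Lefschetz statement (Theorem \ref{quantum Lefschetz}) applied to the embedding $\widetilde{\cX} \hookrightarrow \widetilde{\cP}$ rewrites the central-fiber virtual cycle as $e_T(R\pi_*f^*\cO(d-E)) \cdot [\cM_{0,\underline{\rho}}(\widetilde{\cP},\beta)]^{\vir,T}$ after the prescribed specialization of $T$-equivariant parameters.

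The verifications that $\widetilde{\cX}$ is smooth, that the $\CC^*$-fixed loci of $\widetilde{\cX}$ and $\widetilde{\cP}$ agree, and that the normal bundle $\cO(d-E)$ is convex up to two markings have already been carried out in the proof of Theorem \ref{HGWloop} and transpose verbatim to the genus-zero case; no new argument is required there. In summary, there is no genuine obstacle: the result is a corollary in the strict sense, obtained by specializing $g=0$ and invoking the rank-zero triviality of $\mathbb{E}$ on $\overline{\cM}_{0,n}$.
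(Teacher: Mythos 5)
Your primary argument is exactly the paper's: the corollary is obtained by setting $g=0$ in Theorem \ref{HGWloop} and observing that the Hodge bundle has rank $g=0$ on $\overline{\cM}_{0,n}$, so $e(\mathbb{E}^\vee)=1$ and the Hodge factor on the right-hand side disappears. This is correct and matches the paper's (one-line) justification, so nothing further is needed.
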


\subsection{Hodge--Gromov--Witten theory for invertible polynomials}\label{secinv}
A quasi-homogeneous polynomial $P$ is called invertible if it has as many monomials as variables.  By \cite{KS}, an invertible polynomial has an isolated singularity at the origin if and only if it is the Thom--Sebastiani sum of chain and loop polynomials.\footnote{A Fermat monomial is a special case of a chain or loop polynomial.}  Precisely, up to renaming variables, the polynomial $P$ equals
$$P(x_1,\dotsc,x_N) = P_1(x_1,\dotsc,x_{N_1}) + \dotsb + P_r(x_{N_{r-1}+1},\dotsc,x_N),$$
where the polynomials $P_1,\dotsc, P_r$ are either chain or loop polynomials.  In this section, we assume $P$ is not a Fermat polynomial, \textit{i.e.}~that we have $r < N$.

We introduce $r$ affine variables $s_1,\dotsc,s_r$, and we define a polynomial
$$\widehat{P}(x_1,\dotsc,x_N,s_1,\dotsc,s_r) = \widehat{P}_1(x_1,\dotsc,x_{N_1},s_1) + \dotsb + \widehat{P}_r(x_{N_{r-1}+1},\dotsc,x_N,s_r)$$
in the following way:
\begin{itemize}
\item If $P_i$ is a chain polynomial of the form $y_1^{a_1}y_2+\dotsb+y_N^{a_N}$, then we set
$\widehat{P}_i = y_1^{a_1}y_2+\dotsb+y_N^{a_N}s_i$.
\item Otherwise, $P_i$ is a loop polynomial of the form $y_1^{a_1}y_2+\dotsb+y_N^{a_N}y_1$, and we set
$\widehat{P}_i = y_1^{a_1}y_2+\dotsb+y_N^{a_N}y_1s_i$.
In that case, we say that $y_N$ is the last variable of $\widehat{P}_i$ and $y_1$ is its first variable.
Of course, there are $N$ choices to incorporate the variable $s_i$ in $P_i$, and we fix one once and for all.
\end{itemize}

Next, we define the $\aA^r$-family
$$\cX = \left\lbrace \widehat{P}=0 \right\rbrace \subset \PP(\underline{w}) \times \aA^r,$$
which is not regular.  We first determine the singular locus of $\cX$, and then we perform weighted blow-ups to resolve it.

Let $J \subset \left\lbrace 1,\dotsc,N \right\rbrace$ be the set of indices $j$ such that $x_j$ is the last variable of some polynomial $\widehat{P}_i$ associated to a loop polynomial $P_i$. Moreover, we define a function $\Phi \colon J \to \left\lbrace 1,\dotsc,N \right\rbrace$ which sends $j$ to the index of the first variable in the loop polynomial associated to $j$.

For any subset $J' \subset J$, we define the subspace
$$B_{J'} \subset \PP(\underline{w}) \times \aA^r$$
where all variables $x_j$ with $j \notin J'$ are zero and all variables $s_j$ with $j \in J'$ are zero.  We observe that the singular locus of $\cX$ equals
$$\mathrm{Sing}(\cX) = \bigcup_{J' \subset J} B_{J'} \subset \PP(\underline{w}) \times \aA^r.$$
Furthermore, we define the invertible polynomial
$$Q_{J'} \(\left\lbrace x_j\right\rbrace_{j \notin J'},\left\lbrace s_j\right\rbrace_{j \in J'}\) = {\widehat{P}(x_1,\dotsc,x_N,s_1,\dotsc,s_r)}_{\substack{|x_j=1 ~\forall j \in J' \\ s_j=1 ~\forall j \notin J'}}-\sum_{j \in J'} x_{\Phi(j)}^{a_{\Phi(j)}}x_{\Phi(j)+1},$$ 
which is then quasi-homogeneous of some degree $\delta \in \NN^*$, with respect to some positive weights $b_{J'}:=(b_{J'}(1),\dotsc,b_{J'}(N))$ on the variables $x_j, s_j$.

Consider the smooth DM stack $\widetilde{\cP}$ obtained with the following recipe:
\begin{itemize}
\item For all subset $J' \subset J$ of cardinality $1$, we blow up $B_{J'}$ in $\PP(\underline{w}) \times \aA^r$ with weights $b_{J'}$ on the respective variables.
\item The strict transforms of the subspaces $B_{J'}$ with $J' \subset J$ of cardinality $2$ are disjoint, and we blow them up with weights $b_{J'}$ on the respective variables.
\item $\dotsc$
\item The strict transforms of the subspaces $B_{J'}$ with $J' \subset J$ of cardinality $\#J-1$ are disjoint, and we blow them up with weights $b_{J'}$ on the respective variables.
\item We blow up the strict transform of $B_J$ with weights $b_{J'}$ on the respective variables.
\end{itemize}
We denote by $\widetilde{\cX} \hookrightarrow \widetilde{\cP}$ the strict transform of $\cX$ under the birational map $\widetilde{\cP} \to \PP(\underline{w}) \times \aA^r$, and by $E$ the sum of the exceptional divisors.

\begin{pro}
The $\aA^r$-family $\widetilde{\cX}$ is regular.  Moreover, the fiber at $(s_1,\dotsc,s_r)=(1,\dotsc,1)$ equals the degree $d$ hypersurface $X$ in $\PP(\underline{w})$ defined by the invertible polynomial $P$.
\end{pro}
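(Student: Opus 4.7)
The fibre claim is immediate. Each blow-up centre $B_{J'}$ with $J'\neq\emptyset$ sits inside $\{s_j=0\text{ for some }j\in J'\}$, so the composed birational morphism $\widetilde{\cP}\to\PP(\underline{w})\times\aA^r$ is an isomorphism over the open locus $\{s_1\cdots s_r\neq 0\}$, and in particular over the fibre at $(1,\ldots,1)$. There, the strict transform $\widetilde{\cX}$ coincides with $\cX$, whose equation specialises to $\widehat P(\,\cdot\,,1,\ldots,1)=P$, giving the hypersurface $X$.

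For regularity I split into two steps: (a) $\widetilde{\cX}$ is smooth, and (b) $\widetilde{\cX}\to\aA^r$ is flat. Given (a), (b) follows from miracle flatness: $\widetilde{\cX}$ is then Cohen--Macaulay, the base $\aA^r$ is regular, and a dimension count in each blow-up chart shows that every fibre has pure dimension $\dim\widetilde{\cX}-r$. Thus the real work lies in (a).

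For (a) I proceed inductively on $|J'|$, generalising the single-chart computation in the proof of Theorem \ref{HGWloop}. In a chart of the first round of blow-ups ($|J'|=1$, weights $b_{J'}$), introduce coordinates $z_k=u^{b_{J'}(k)}$ and $z_j=u^{b_{J'}(j)}\dot z_j$ and divide the pulled-back equation by $u^{\delta}$. Since the weights make $Q_{J'}$ quasi-homogeneous of degree $\delta$, the strict transform has local equation of the form
\[
Q_{J'}(\dot z)\;+\;\sum_{j\in J'}\dot x_{\Phi(j)}^{a_{\Phi(j)}}\dot x_{\Phi(j)+1}\;+\;u^{m}\cdot(\text{tail})\;=\;0\qquad(m\geq 1),
\]
where the middle sum restores precisely the loop monomials absorbed into $Q_{J'}$. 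Because $Q_{J'}$ is a (Thom--Sebastiani) sum of chain-type polynomials, the partial-derivative argument from the proof of Theorem \ref{HGWloop} exhibits a non-zero entry of the Jacobian at every point of the strict transform; on the exceptional locus $u=0$, the restored monomials $\dot x_{\Phi(j)}^{a_{\Phi(j)}}\dot x_{\Phi(j)+1}$ supply the necessary non-vanishing derivative. After this first round, the strict transforms of the centres $B_{J'}$ with $|J'|=2$ lie in distinct coordinate strata of the exceptional divisor and are hence pairwise disjoint; blowing them up with weights $b_{J'}$ and repeating the same local computation preserves smoothness. Iterating through $|J'|=3,\ldots,|J|$ yields smoothness of $\widetilde{\cX}$.

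\textbf{Main obstacle.} The chief difficulty is the combinatorial bookkeeping in the induction: verifying at each stage that the strict transforms of the yet-to-be-blown-up $B_{J'}$ are pairwise disjoint, and identifying in each local chart precisely which of them meet it, so that the smoothness criterion above can be applied uniformly. Once this combinatorial structure is unravelled, the local smoothness assertion reduces to the chain-polynomial derivative computation already carried out in Theorem \ref{HGWloop}.
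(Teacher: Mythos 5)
Your overall strategy for the smoothness claim is the same as the paper's: pass chart by chart through the weighted blow-ups and exhibit a non-vanishing partial derivative of the equation of the strict transform. The paper carries this out for the representative case of a sum of two loop polynomials and leaves the general bookkeeping implicit, whereas you organise it as an induction on $|J'|$; your treatment of the fibre at $(1,\dotsc,1)$ is correct and the paper omits it as obvious. However, your local model of the strict transform is wrong in a way that breaks the argument as written. The monomials $x_{\Phi(j)}^{a_{\Phi(j)}}x_{\Phi(j)+1}$ are removed from $\widehat{P}$ \emph{precisely} so that the remainder $Q_{J'}$ is quasi-homogeneous of degree $\delta$ for the weights $b_{J'}$, and those removed monomials then have $b_{J'}$-degree strictly larger than $\delta$ (this is the positivity of $(a_1-1)b_1+b_2$ checked in the proof of Theorem \ref{HGWloop}). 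After dividing the total transform by $u^{\delta}$ they therefore reappear multiplied by a \emph{positive} power of $u$, not with coefficient $1$, and they vanish identically on the exceptional locus $u=0$. So they cannot ``supply the necessary non-vanishing derivative'' there; the non-degeneracy along $u=0$ has to come from $Q_{J'}$ itself (via the Euler relation for the quasi-homogeneous sum of chain polynomials and the fact that its only critical point is the origin, which is excluded in the chart $\dot z_k=1$), which is exactly the derivative the paper takes ($\partial_{\dot x_{k+1}}$ or $\partial_{\dot s}$). The gap is repairable, but the mechanism you invoke is the opposite of the correct one.

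The flatness step is a more serious problem. Miracle flatness requires every fibre of $\widetilde{\cX}\to\aA^r$ to have pure dimension $N-2$, and this fails as soon as $|J|\geq 2$, i.e.\ as soon as two or more loop summands occur. Indeed, for $|J'|\geq 2$ the centre $B_{J'}$ has dimension $r-1$ but its image in $\aA^r$ is the codimension-$|J'|$ stratum where the $s$-coordinates attached to $J'$ vanish; hence the exceptional divisor $E_{J'}$, which has dimension $N+r-2$, lies entirely over that stratum. Since $B_{J'}\subset\mathrm{Sing}(\cX)\subset\cX$, the intersection $\widetilde{\cX}\cap E_{J'}$ is a nonempty divisor in $E_{J'}$ of pure dimension $N+r-3$ surjecting onto $B_{J'}$, and its generic fibre over the stratum has dimension $N+|J'|-3>N-2$. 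So the fibres of $\widetilde{\cX}\to\aA^r$ over that stratum acquire components of excess dimension, the asserted ``dimension count in each blow-up chart'' is false, and miracle flatness does not apply. To be fair, the paper's own proof is entirely silent on flatness (it only verifies smoothness), so this half of ``regular'' is not established there either; but your argument for it, as written, is incorrect rather than merely missing.
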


\begin{proof}
To simplify the exposition, we write the proof for an invertible polynomial which is a sum of two loop polynomials; \textit{i.e.}~we take
$$\widehat{P}\(\underline{x},\underline{y}\) = x_1^{a_1}x_2+\dotsb+x_N^{a_N}x_1s_1+y_1^{a_1}y_2+\dotsb+y_N^{a_N}y_1s_2.$$
The general proof follows the same arguments.  Moreover, we consider only the charts where $x_N\neq 0$ or $y_N \neq 0$, because otherwise $\cX$ is already smooth.

Let us take the chart where $x_N=1$ and consider a point $p=(\underline{x},\underline{y},s_1,s_2)$ in this chart.  We can assume $x_1,\dotsc,x_{N-1}=0$, $y_1,\dotsc,y_{N-1}=0$, $s_1=0$, and $y_N s_2=0$; otherwise, $\cX$ is smooth at $p$.

Let us first assume $y_N=0$ so that we first blow up the locus $B_{\left\lbrace x_N\right\rbrace}$  and then the locus $B_{\left\lbrace x_N,y_N\right\rbrace}$.  After the first blow-up, the equation defining $\cX$ becomes
$$u^{\epsilon_1} \dot{x}_1^{a_1}\dot{x}_2+\dotsb+\dot{x}_1\dot{s}_1+\dot{y}_1^{a_1}\dot{y}_2+\dotsb+\dot{y}_N^{a_N}\dot{y}_1s_2 = 0$$
in the new coordinates (see the proof of Theorem~\ref{HGWloop}) and with $\epsilon_1$ some non-negative integer.  If one of the dotted variable other than $\dot{y}_N$ is non-zero, then this equation satisfies the smoothness criterion, and we are away from the second blow-up center $B_{\left\lbrace x_N,y_N\right\rbrace}$.  Thus, we can assume they are all zero but $\dot{y}_N=1$.  After the second blow-up, we then obtain
$$v^{\epsilon_2}u^{\epsilon_1} \ddot{x}_1^{a_1}\ddot{x}_2+\dotsb+\ddot{x}_1\ddot{s}_1+v^{\epsilon_3}\ddot{y}_1^{a_1}\ddot{y}_2+\dotsb+\ddot{y}_1\ddot{s}_2 = 0$$
in new coordinates defined in the same way as for the first blow-up and with some non-negative integers $\epsilon_2,\epsilon_3$. Then this equation satisfies the smoothness criterion.

Finally, let us consider the case where $y_N \neq 0$ and $s_2=0$.  Then we are away from the first blow-up center $B_{\left\lbrace x_N\right\rbrace}$, and we only need to perform the second blow-up.  The equation defining $\widetilde{\cX}$ is then of the form
$${v'}^{\epsilon_2} \(\ddot{x}'_1\)^{a_1}\ddot{x}'_2+\dotsb+\ddot{x}'_1\ddot{s}'_1+{v'}^{\epsilon_3}\(\ddot{y}'_1\)^{a_1}\ddot{y}'_2+\dotsb+y_N^{a_N}\ddot{y}'_1\ddot{s}'_2 = 0$$
in new coordinates, and it satisfies the smoothness criterion since $y_N \neq 0$.
\end{proof}

Consider the natural torus action of $T = \(\CC^*\)^{N+r}$ on the toric DM stack $\PP(\underline{w}) \times \aA^r$.  Since every subspace $B_{J'}$ is stable under the torus action, the weighted blow-up $\widetilde{\cP}$ is then again a toric DM stack with $T$-action.  Moreover, the normal bundle of $\widetilde{\cX} \hookrightarrow \widetilde{\cP}$ is the pull-back of the $T$-equivariant line bundle $\cO(d-E)$ on $\widetilde{\cP}$.

Let $T' := \(\CC^*\)^r$ and define an embedding $T' \hookrightarrow T$ such that the polynomial $\widehat{P}$ is $T'$-equivariant.  Precisely, if $t_1,\dotsc,t_N, \tau_1,\dotsc,\tau_r$ denote variables of $T$, then we impose $M(\underline{t},\underline{\tau})=1$ for every monomial $M$ of the polynomial $\widehat{P}$.  Equivalently, we let $T'$ act separately on each $\widehat{P}_i$, where we take the $\CC^*$-action defined for chain and loop polynomials in the previous sections.  We can also consider a generic embedding $\CC^* \hookrightarrow T'$.

As a consequence, the regular $\aA^r$-family $\widetilde{\cX}$ is $\CC^*$-equivariant (even $T'$-equivariant), and we check easily that the $\CC^*$-fixed loci in $\widetilde{\cX}$ and in $\widetilde{\cP}$ are equal and consist of isolated fixed points in the central fiber $(s_1,\dotsc,s_r)=(0,\dotsc,0)$.
Moreover, by the same positivity argument as in the proof of Theorem~\ref{HGWloop}, we see that the normal bundle $\cO(d-E)$ of $\widetilde{\cX} \hookrightarrow \widetilde{\cP}$ is convex up to two markings.
We then conclude with the following statement.

\begin{thm}[Genus zero Gromov--Witten theory of invertible hypersurfaces]\label{HGWinv}
We fix $n \geq 3$, $\beta \in \NN$, and isotropies $\underline{\rho} = (\rho_1, \dotsc, \rho_n)$ in $\PP(\underline{w})$.
We assume there exists a degree $d$ hypersurface in $\PP(\underline{w})$ defined by an invertible polynomial.
Let $X \subset \PP(\underline{w})$ be any smooth hypersurface of degree $d$ and $\alpha_1,\dotsc,\alpha_n$ be ambient cohomology classes on $X$, \textit{i.e.}~pulled back from $\PP(\underline{w})$.
We set $\alpha := \prod_{i=1}^n \ev_i^*(\alpha_i)$ to be the product of insertions.
Then we have the following equality in the Chow ring of\, $\overline{\cM}_{0,n}$: 
$$\left[\cM_{0,\underline{\rho}}\(\widetilde{\cP},\beta\)\right]^{\vir,T} \cdot e_T (R \pi_*f^*\cO(d-E)) \cdot \alpha \xrightarrow[t \to 0]{} \left[\cM_{0,\underline{\rho}}(X,\beta)\right]^\vir \cdot \alpha.$$
Precisely, the class $e_T (R \pi_*f^*\cO(d-E))$ is only defined after localization, so we first apply the virtual localization formula to the left-hand side, then we compute it in $A^T_*(\overline{\cM}_{0,n} \times \aA^r)$ as a formal series in the $T$-equivariant parameters and their inverses, then we specialize them via the embedding $\CC^* \hookrightarrow T$ and obtain a well-defined polynomial in $t$, and finally we take the constant coefficient and pull it back from $A_*(\overline{\cM}_{0,n} \times \aA^r)$ to $A_*(\overline{\cM}_{0,n})$.
\end{thm}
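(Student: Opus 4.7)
The plan is to invoke Theorem \ref{big theo} for the embedding $\widetilde{\cX} \hookrightarrow \widetilde{\cP}$ of regular affine families, after verifying every hypothesis of Section \ref{QLsection}. The preceding discussion already supplies most of them: the proposition above establishes that $\widetilde{\cX}$ is smooth with fiber at $(1,\dotsc,1)$ equal to the invertible hypersurface $X_P$; $\widetilde{\cP}$ is a toric DM stack carrying the full $T$-action; the $\CC^*$-fixed loci in $\widetilde{\cX}$ and in $\widetilde{\cP}$ coincide and consist of isolated points in the central fiber; and the normal bundle is the pull-back of $\cO(d-E)$. The remaining assumption, convexity up to two markings, I plan to handle by iterating the positivity trichotomy from the proof of Theorem \ref{HGWloop}: for any stable map $\widetilde{f} \colon \cC \to \widetilde{\cP}$ from a genus-$0$ orbifold curve with at most two markings, the image $\widetilde{f}(\cC)$ either avoids the chain of blow-up centers $B_{J'}$ (so $\cO(d-E)$ restricts to $\cO(d)$, of nonnegative degree), is the strict transform of a line through one such center (shaving off a multiple of unity), or lies inside some exceptional divisor (in which case the restriction is a positive bundle). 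In all three cases the $H^1$ of the pullback vanishes, giving convexity.

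The only structural difference from Theorem \ref{big theo} is that the base of the family is $\aA^r$ rather than $\aA^1$. In genus zero the Hodge bundle is trivial, so the factor $e(\mathbb{E}^\vee)$ disappears and Corollary \ref{skidg=0} extends to an $\aA^r$-family by applying its $\aA^1$ version iteratively along the coordinate directions $s_1, \dotsc, s_r$. Combining this with the equivariant Quantum Lefschetz formula of Theorem \ref{quantum Lefschetz} yields the claimed identity at the level of $T$-equivariant cycles in $A^T_*(\overline{\cM}_{0,n} \times \aA^r)$, which is then pulled back to $A_*(\overline{\cM}_{0,n})$ by restriction to the central fiber, exactly as in the second part of Theorem \ref{big theo}.

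The theorem is stated for an arbitrary smooth degree-$d$ hypersurface $X \subset \PP(\underline{w})$, not only for $X_P$; this last reduction is handled by the standard deformation invariance of Gromov--Witten theory, since the locus of smooth hypersurfaces of fixed degree inside the linear system is connected. The main obstacle I anticipate is the bookkeeping for the ``defined after localization'' clause on the $T$-equivariant Euler class of $R\pi_*f^*\cO(d-E)$ in the $\aA^r$-setting: one must verify that the embedding $\CC^* \hookrightarrow T$ is sufficiently generic so that at each of the finitely many $\CC^*$-fixed points no $T$-weight degenerates, guaranteeing a well-defined polynomial in the single variable $t$ after specialization. This is an openness condition on the cone of admissible specializations, of the same flavor as the explicit choices made in Theorems \ref{HGWchain} and \ref{HGWloop}, and is the technical heart of the verification.
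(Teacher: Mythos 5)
Your overall route is the paper's: the theorem is proved by verifying the hypotheses of Theorem \ref{big theo} for the embedding $\widetilde{\cX} \hookrightarrow \widetilde{\cP}$ (equal $\CC^*$-fixed loci consisting of isolated points in the central fiber, toric ambient space with $T$-action, normal bundle pulled back from $\cO(d-E)$, convexity up to two markings via the same positivity trichotomy as in Theorem \ref{HGWloop}), then concluding in genus zero where the Hodge class is trivial; the paper presents exactly this as the discussion preceding the statement, with no separate proof environment.

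One step in your write-up is shakier than the paper's treatment: you propose to extend Corollary \ref{skidg=0} from an $\aA^1$-base to an $\aA^r$-base ``by applying its $\aA^1$ version iteratively along the coordinate directions $s_1,\dotsc,s_r$.'' The Regular Specialization Theorem requires the \emph{total space} of the family to be smooth, and the restriction of the smooth stack $\widetilde{\cX}$ to a coordinate hyperplane $\{s_i = c\} \times \aA^{r-1}$ is not automatically smooth, so each intermediate stage of your iteration needs a separate smoothness verification that you do not supply. The paper avoids this entirely via the remark following Theorem \ref{skid equiv}: the construction of Section \ref{sec1} is carried out in one step over $\aA^m$, using $L_{X_0/\cX} \simeq \cO^{\oplus m}[1]$ and \cite[Proposition 5.10]{BF} for the regular embedding $0 \hookrightarrow \aA^m$, which produces the $m$-th power of the Hodge class on the regularized side --- harmless in genus zero. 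Your remaining points (deformation invariance to pass from the invertible-polynomial hypersurface to an arbitrary smooth $X$ of degree $d$, and the genericity requirement on $\CC^* \hookrightarrow T$ so that no $T$-weight at a fixed point degenerates under specialization) match what the paper does.
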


\begin{rem}
Theorem~\ref{HGWinv} is also valid in higher genus, but that case is not interesting as the right-hand side is multiplied by the $\supth{r}$ power of the Euler class of the Hodge bundle, and we know its square is zero for positive genus.
\end{rem}

\begin{rem}
Theorem~\ref{HGWinv} yields an explicit formula for genus zero Gromov--Witten invariants of $X$ as a sum over dual graphs.
Indeed, such a formula is known for every smooth toric DM stack, see \textit{e.g.}~\cite{Liu}, and~$\widetilde{\cP}$ is such an item.
Moreover, the complexity of the space $\widetilde{\cP}$ only comes from loop polynomials.
In particular, when it is possible to represent the hypersurface $X$ by a Thom--Sebastiani sum of chain polynomials, we can simplify the formula to
$$\left[\cM_{0,\underline{\rho}}(\PP(\underline{w}),\beta)\right]^{\vir,T} \cdot e_T (R \pi_*f^*\cO(d)) \cdot \alpha \xrightarrow[t \to 0]{} \left[\cM_{0,\underline{\rho}}(X,\beta)\right]^\vir \cdot \alpha$$
in the Chow ring of $\overline{\cM}_{0,n}$.
\end{rem}

\begin{rem}
There is a list of all $7555$ Calabi--Yau $3$-folds that are hypersurfaces in weighted projective spaces; see \cite{Candelas2,Klemm2,Machine} or Kreuzer's webpage.
Among them, there are about $800$ hypersurfaces represented by a Fermat polynomial, hence satisfying the convexity assumption.
There are about $6000$ hypersurfaces defined by an invertible polynomial, hence computable via Theorem~\ref{HGWinv}.
The last $10\%$ correspond to non-degenerate polynomials with more than five monomials and are not treated by this paper, \textit{e.g.}
$$x_1^{15}+x_2^5+x_3^5x_5+x_4^2x_5+x_5^9+x_3^2x_2x_4=0 \quad \textrm{in }\PP(3,9,8,20,5).$$
Observe as well that the main difficulty in Theorem~\ref{HGWinv} comes from the sequence of weighted blow-ups in the definition of the ambient space $\widetilde{\cP}$.
However, as we consider $3$-folds, there are at most two loop polynomials in the Thom--Sebastiani sum, so  the birational map $\widetilde{\cP} \to \PP(\underline{w}) \times \aA^r$ is defined by at most three blow-ups.
\end{rem}

\begin{rem}\label{chi6}
Calabi--Yau $3$-folds with Euler characteristic equal to $\chi=\pm 6$ are especially important in string theory.
The first instance is the Tian--Yau manifold, defined as the quotient of a smooth complete intersection in $\PP^3 \times \PP^3$ of degrees $(3,0), (0,3),$ and $(1,1)$ by a free action of $\ZZ/3\ZZ$.
It appeared as a candidate for a string theory's potential solution to the universe; see \cite{Machine,Greene}.
Other examples are given by hypersurfaces in weighted projective space.
A list of $40$ items is given in \cite{Klemm2}, among which $14$ hypersurfaces are defined by an invertible polynomial.
None of these $14$ hypersurface satisfy the convexity assumption, and only one involves a loop polynomial, namely
$$x_1^7x_2+x_2^5x_1+x_3^{17}x_4+x_4^2x_5+x_5^3=0 \quad \textrm{in }\PP(6,9,2,17,17).$$
\end{rem}

\subsection{Regularizable stacks}
As a by-product of Section~\ref{sec1}, we extend genus zero Gromov--Witten theory to a particular set of singular DM stacks that we call regularizable, and we prove invariance under regular deformations.

\begin{dfn}\label{regularisable}
A DM stack $X$ is called regularizable if there is an embedding $X \hookrightarrow \cX$ as a fiber in a family $\cX$ over $\aA^m$, for some integer $m$, whose total space is regular.
The genus zero Gromov--Witten theory of $X$ is then defined using a 
regularized virtual cycle.
A substack $X \subset \cP$ of a  smooth DM stack $\cP$ is called regularizable inside $\cP$ if we can choose the family $\cX$ above as a subfamily of the trivial family $\cP \times \aA^m$.
\end{dfn}

\begin{exa*}
Smooth DM stacks are regularizable via a trivial family.
The hypersurface $X_0$ from Remark~\ref{afterloc} is singular, but it is regularizable inside $\PP(\underline{w})$.
Every hypersurface in a projective space is regularizable inside the projective space.
The quartic orbifold curve
$$\left\lbrace x^4y+y^3z=0 \right\rbrace \subset \PP(1,1,2)$$
is not regularizable inside $\PP(1,1,2)$.
\end{exa*}

In the following proposition, we illustrate the fundamental role of hypersurfaces defined by a Thom--Sebastiani sum of chain polynomials.

\begin{pro}\label{caractreg}
Let $X \subset \PP(\underline{w})$ be a regularizable hypersurface inside a weighted projective space, such that there is a $\CC^*$-action on $\PP(\underline{w})$ leaving $X$ invariant and whose fixed points are isolated.
Then $X$ is singular, and there exists a smooth hypersurface in $\PP(\underline{w})$ defined by a Thom--Sebastiani sum of chain polynomials.

Conversely, if there exists a smooth hypersurface in $\PP(\underline{w})$ defined by chain polynomials, then there is a regularizable hypersurface stable under a $\CC^*$-action from $\PP(\underline{w})$ with isolated fixed points.
\end{pro}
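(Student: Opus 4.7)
The plan is to start with the converse direction since it is a direct application of the regularization constructions already used in Section~\ref{secinv}. Given a smooth hypersurface in $\PP(\underline{w})$ defined by a Thom--Sebastiani sum $F = P_1 + \dotsb + P_r$ of chain polynomials, I will drop the top monomial $y_{i,N_i}^{a_{i,N_i}}$ of each chain to obtain a truncated polynomial $F_0$ and the singular hypersurface $X_0 = \{F_0 = 0\}$. The $\aA^r$-family
$$\cX = \Bigl\{ F_0 + \sum_{i=1}^{r} s_i \, y_{i,N_i}^{a_{i,N_i}} = 0 \Bigr\} \subset \PP(\underline{w}) \times \aA^r$$
has $X_0$ as its fiber at $\underline{s} = 0$ and is smooth by the argument used in the proofs of Theorems~\ref{HGWchain} and~\ref{HGWloop}: at each singular point of $F_0$ the partial derivative of the total equation with respect to some $s_i$ is non-zero. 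Hence $X_0$ is regularizable inside $\PP(\underline{w})$. Each chain $P_i$ carries its natural $\CC^*_i$-action with weights $p_{i,j+1} = (-a_{i,1}) \cdots (-a_{i,j})$ making $F_0$ invariant, and a generic one-parameter subgroup of $\prod_i \CC^*_i$ yields a $\CC^*$-action on $\PP(\underline{w})$ with pairwise distinct weight ratios $p_j/w_j$, hence with isolated fixed points.

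For the forward direction, diagonalize the $\CC^*$-action, write $p_j$ for its weight on $x_j$, and set $r_j := p_j/w_j$. Isolated fixed points force the $r_j$ pairwise distinct; after reindexing $r_1 < \dotsb < r_N$, the fixed locus consists of the $N$ coordinate points $e_j$. The defining polynomial $F$ of $X$ is $\CC^*$-homogeneous of some weight $q$, and the pure powers $x_j^{d/w_j}$ (when defined) carry the pairwise distinct weights $d\,r_j$, so at most one $e_j$ avoids $X$. For each $e_j \in X$, either $X$ is smooth at $e_j$, which forces a monomial $x_j^{k_j} x_{\sigma(j)}$ in $F$ with $k_j w_j + w_{\sigma(j)} = d$ and $k_j p_j + p_{\sigma(j)} = q$ (producing a linear term in the chart $x_j=1$), or $X$ is singular at $e_j$, in which case regularizability of $X$ inside $\PP(\underline{w})$ forces the ambient regular family to contain a family coefficient of $x_j^{d/w_j}$ whose $s_\ell$-derivative at $\underline{s}=0$ is non-zero for some $\ell$. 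Picking generic $\underline{s}$ in the regular family then yields a smooth hypersurface in $\PP(\underline{w})$ defined by $F$ plus a pure power at each originally singular fixed point. Reading off the successor map $\sigma$ from the monomial structure of this smooth polynomial organizes $\{1,\dotsc,N\}$ into a disjoint union of chains and gives the desired Thom--Sebastiani decomposition. The singularity of $X$ follows because, if no pure power were added by the regularization, $F$ alone would already realize a smooth Thom--Sebastiani sum of chains, contradicting the fact that at most one pure power $x_j^{d/w_j}$ has the required weight $q$ while a chain decomposition requires one such pure power per chain.

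The main obstacle will be verifying that the assignment $\sigma$ produces only chains and no loops, so that the decomposition is genuinely a Thom--Sebastiani sum of \emph{chain} polynomials. I plan to use the strict ordering $r_1 < \dotsb < r_N$ together with the $\CC^*$-weight equation $p_j k_j + p_{\sigma(j)} = q$: along a hypothetical loop $i_1 \to i_2 \to \dotsb \to i_\ell \to i_1$ these relations form a linear system whose generic solution forces $r_{i_1} = \dotsb = r_{i_\ell}$, contradicting the distinctness of the $r_j$. This also explains why loop polynomials cannot appear in the Thom--Sebastiani decomposition when one demands isolated fixed points on $\PP(\underline{w})$.
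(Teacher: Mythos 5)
Your overall strategy coincides with the paper's in both directions: the converse uses the same family obtained by truncating each chain and reinserting its Fermat monomial multiplied by $s_i$, with a generic one-parameter subgroup of the product of chain actions; the forward direction is the same analysis of monomials at the coordinate fixed points, organized into a directed successor graph whose cycles are excluded by the $\CC^*$-weight relations. There are, however, two genuine gaps in the forward direction. First, ruling out loops is not enough: for the monomials to assemble into a Thom--Sebastiani sum of \emph{disjoint} chains you also need the successor map to be injective. If $\sigma(i)=\sigma(k)=j$ with $i\neq k$, the two chains share the variable $x_j$ and the sum is not Thom--Sebastiani. The paper treats this explicitly (``two edges colliding''): from $k_iw_i+w_j=k_kw_k+w_j=d$ and $k_ip_i+p_j=k_kp_k+p_j=q$ one gets $r_i=r_k$, contradicting distinctness --- the same tool you use for loops, but it must be stated. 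Second, your argument that $X$ is singular does not cover the case where the graph is a \emph{single} chain through all $N$ variables terminating in the unique allowed pure power: that configuration is consistent both with ``at most one pure power'' and with ``one pure power per chain,'' so your stated contradiction never arises. It is excluded because the weight relations propagated backwards along a chain that ends in its own pure power force $r_j=q/d$ for \emph{every} vertex of that chain, again contradicting distinctness once the chain has length at least two.

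Two smaller points. In the converse, your construction degenerates when every chain has length one (a Fermat polynomial): then $F_0\equiv 0$, the fiber at $\underline{s}=0$ is all of $\PP(\underline{w})$ rather than a hypersurface, and the Fermat hypersurface itself only admits the trivial $\CC^*$-action (all ratios $p_i/w_i$ equal), so no fixed points are isolated; the paper handles this case separately via the family $y_1^{b_1}s_1+\dotsb+y_{N-1}^{b_{N-1}}s_{N-1}+y_N^{b_N}$ with the non-reduced central fiber $\{y_N^{b_N}=0\}$. Finally, the detour through a generic fiber of the regular family is both unnecessary and unjustified as stated: the generic fiber's equation need not be ``$F$ plus pure powers'' (the family may carry arbitrary degree-$d$ monomials with $s$-dependent coefficients). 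What regularizability actually yields at a singular coordinate point $e_j$ is only the divisibility $w_j\mid d$, and the smooth chain hypersurface is then constructed abstractly from the graph, not read off from any fiber of the family.
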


\begin{proof}
For every hypersurface $X = \left\lbrace P=0 \right\rbrace \subset \PP(\underline{w})$, denoting by $\cM$ the set of monomials of $P$, we see easily that
\begin{align*}
(1,0\dotsc,0) \notin X & \iff  \exists m \in \NN^*,\quad x_1^m \in \cM, \\
(1,0\dotsc,0) \in X-\mathrm{Sing}(X) & \iff  \exists m \in \NN^*, j \neq 1,\quad x_1^mx_j \in \cM.
\end{align*}
Moreover, if $(1,0\dotsc,0) \in \mathrm{Sing}(X)$, then we have
$$\textrm{$X$ is regularizable inside $\PP(\underline{w})$} \implies w_1 | d.$$
Therefore, whenever $X$ is regularizable inside $\PP(\underline{w})$, we have, for every $1 \leq j \leq N$, either $w_j | d$ or a monomial $x_j^{a_j}x_k \in \cM$, with possibly $k=j$.

Furthermore, assume $X$ is invariant under a $\CC^*$-action on $\PP(\underline{w})$ with weights $p_1,\dotsc,p_N$.
If we have $(1,0,\dotsc,0) \notin X$, then we get $w_1 p_j=w_j p_1$ for all variable $x_j$ involved in the polynomial $P$, and fixed points are not isolated (unless $P$ is a Fermat monomial).
Thus, if the $\CC^*$-action has only isolated fixed points, there are no Fermat monomials in $\cM$ (unless $P$ is itself a Fermat monomial, and then there is a smooth Fermat hypersurface in $\PP(\underline{w})$).

As a consequence, if $X$ is as in the statement, then it contains all coordinate points.
We introduce the set $T \subset \left\lbrace 1,\dotsc,N\right\rbrace^2$ defined by
$$(i,j) \in T \iff \exists a_i \in \NN^*,\quad x_i^{a_i}x_j \in \cM.$$
By the discussion above, $T$ does not intersect the diagonal, and for every $i$ such that $w_i$ does not divide~$d$, there is at least one $j$ such that $(i,j) \in T$.
We view $T$ as a directed graph, and we check easily that if we have a loop in $T$, \textit{i.e.}~$j_1, \dotsc, j_m$ such that $(j_1,j_2), \dotsc, (j_m,j_1)$ are in $T$, then the $\CC^*$-action is trivial on $\PP(w_{j_1},\dotsc,w_{j_m}) \subset \PP(\underline{w})$.
Moreover, if we have two edges colliding, \textit{i.e.}~$i,j,k$ such that $(i,j)$ and $(k,j)$ are in $T$, then $w_j p_k = w_k p_j$ and the $\CC^*$-action is trivial on $\PP(w_j,w_k) \subset \PP(\underline{w})$.
Therefore, there is a directed subgraph in $T$ consisting of a disjoint union of directed lines. Each line corresponds to a chain polynomial without its last Fermat monomial; the statement follows.

Conversely, we take a Thom--Sebastiani sum of chain polynomials $\widehat{P}$.
Up to renaming variables, we can assume the Fermat monomials of $\widetilde{P}$ are $y_1^{b_1}, \dotsc, y_m^{b_m}$.
If $\widehat{P}$ is not a Fermat polynomial, \textit{i.e.}~$m \neq N$, then we define
$$\widetilde{P} = \widehat{P}+(s_1-1)y_1^{b_1} + \dotsb + (s_m-1)y_m^{b_m} \quad \textrm{and} \quad P=\widetilde{P}_{|s_1=\cdots=s_m=0}.$$
Then the singular hypersurface $X = \left\lbrace P=0 \right\rbrace \subset \PP(\underline{w})$ is invariant under a $\CC^*$-action on $\PP(\underline{w})$ whose fixed points are isolated, and the family $\cX = \left\lbrace \widetilde{P} = 0 \right\rbrace \subset \PP(\underline{w}) \times \aA^m$ is regular.
If $\widehat{P}$ is the Fermat polynomial $\widehat{P} = y_1^{b_1}+ \dotsb+ y_N^{b_N}$, then we define
$$\widetilde{P} = y_1^{b_1}s_1+ \dotsb+ y_{N-1}^{b_{N-1}}s_{N-1} + y_N^{b_N}$$
and $P=y_N^{b_N}$. Then the singular hypersurface $X = \left\lbrace P=0 \right\rbrace \subset \PP(\underline{w})$ is invariant under a $\CC^*$-action on $\PP(\underline{w})$ whose fixed points are isolated, and the family $\cX = \left\lbrace \widetilde{P} = 0 \right\rbrace \subset \PP(\underline{w}) \times \aA^{N-1}$ is regular.
\end{proof}

\begin{pro}\label{invariance}
Let $X$ be a regularizable DM stack, and let $\cX$ be a family over $\aA^m$ whose total space is regular and such that it contains $X$ as a fiber.
Then every fiber is a regularizable DM stack, and the genus zero Gromov--Witten theory is independent of the fiber.
\qed
\end{pro}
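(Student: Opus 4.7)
The first claim is immediate, so I would dispatch it in one sentence: if $X_t$ is any fiber of the regular $\aA^m$-family $\cX$, then $\cX$ itself realizes $X_t$ as a fiber of a regular affine family, so $X_t$ is regularizable by definition.

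For the independence statement, my plan is to extend Proposition \ref{equality of skid} from $\aA^1$-families to $\aA^m$-families, and then to exploit the triviality of the Chow group of affine space. First I would observe that the inclusion $\{t\} \hookrightarrow \aA^m$ is a regular embedding of codimension $m$ with conormal bundle $\cO^{\oplus m}$, so that $L_{X_t/\cX} \simeq \cO^{\oplus m}[1]$ and Definition \ref{skid pot} adapts verbatim by setting $F_{X_t} := \mathrm{Cone}(\cO^{\oplus m} \to j_t^* E_{\cX})$. The same compatibility datum argument as in Proposition \ref{equality of skid} then yields the equality $j_t^! [\cM_\cX, E_\cX] = [\cM_{X_t}, F_{X_t}]$ in $A_*(\cM_{X_t})$. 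In genus zero, the Hodge bundle vanishes, hence $F_{X_t}$ reduces to the usual Gromov--Witten obstruction theory on $\cM_{X_t}$ and the regularized virtual cycle is precisely the cycle we use to define the genus-zero Gromov--Witten theory of the (possibly singular) regularizable fiber $X_t$, cf.~Remark \ref{0skid}.

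Next I would push everything forward to $\overline{\cM}_{0,n}$ using the proper map $q \colon \cM_\cX \to \aA^m \times \overline{\cM}_{0,n}$. Base change in intersection theory provides $q_{t*} \circ j_t^! = p_t^! \circ q_*$, where $p_t \colon \{t\} \times \overline{\cM}_{0,n} \hookrightarrow \aA^m \times \overline{\cM}_{0,n}$ is the inclusion of the fiber over $t$. By homotopy invariance of Chow groups, the flat pullback $\pi_2^* \colon A_*(\overline{\cM}_{0,n}) \to A_*(\aA^m \times \overline{\cM}_{0,n})$ is an isomorphism up to degree shift, and $p_t^!$ is its inverse for every point $t$. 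Consequently $q_{t*}\bigl([\cM_{X_t}, F_{X_t}] \cdot \prod_i j_t^* \ev_\cX^*(\alpha_i)\bigr)$ is independent of $t$ for any insertions $\alpha_i \in A^*(I\cX)$, which is exactly the sought-after invariance.

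The main potential obstacle is the $\aA^m$-version of Proposition \ref{equality of skid}, namely the production of a compatibility datum $F_{X_t} \to L_{\cM_{X_t}}$ relative to $\{t\} \hookrightarrow \aA^m$. However, this is formal: it follows from the distinguished triangles of cones together with the quasi-isomorphism $q_t^* L_{t/\aA^m} \simeq \cO^{\oplus m}[1]$, and in genus zero the vanishing of $\mathbb{E}$ collapses the construction to an essentially $\aA^1$-shaped argument. Everything else is standard Gysin/pushforward formalism.
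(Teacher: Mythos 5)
Your argument is correct and coincides with the paper's intended one: the paper states this proposition with no written proof (the \qed follows the statement directly), leaving it as an immediate consequence of Proposition \ref{equality of skid} extended to $\aA^m$-families, Remark \ref{0skid}, the properness of $q$, and the identification $A_*(\aA^m \times \overline{\cM}_{0,n}) \simeq A_*(\overline{\cM}_{0,n})$ — exactly the ingredients you assemble. The only point worth adding is the bookkeeping of curve classes across different fibers, which the paper handles elsewhere by fixing an ambient space $\cP$ containing all fibers and summing over classes with the same image in $H_2(\cP)$.
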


\begin{rem}
A special feature of genus zero Gromov--Witten theory is that we do not need a globally defined torus action on the family over $\aA^m$ to apply the equivariant regular specialization theorem (Theorem~\ref{skid equiv}): a torus action on the central fiber is enough, as soon as the normal bundle of the central fiber is torus equivariant.
\end{rem}

%%%%%%%%%%%%%%%%%%%%%
% References
%%%%%%%%%%%%%%%%%%%%%

\newcommand{\etalchar}[1]{$^{#1}$}

\end{document}